\documentclass[12pt]{amsart}

\usepackage[headings]{fullpage}
\usepackage{amssymb,epic,eepic,epsfig,amsbsy,amsmath,amscd,color,graphicx,subcaption,texdraw,bbm,url}
\usepackage[all]{xy}
\usepackage{mathrsfs}
\usepackage{standalone}
\usepackage[normalem]{ulem}
\usepackage{comment}
\usepackage{soul}
\usepackage[final]{pdfpages}
\usepackage{tikz}
\usetikzlibrary{cd,positioning,knots,patterns,hobby,math,decorations}

\theoremstyle{plain}
\newtheorem{theorem}{Theorem}[section]
\newtheorem{thm}{Theorem}
\newtheorem{lemma}[theorem]{Lemma}
\newtheorem{corollary}[theorem]{Corollary}
\newtheorem{proposition}[theorem]{Proposition}
\newtheorem{conjecture}{Conjecture}
\newtheorem{question}{Question}

\newtheorem{definition}{Definition}

\theoremstyle{definition}

\newtheorem{remark}[theorem]{Remark}
\newtheorem{example}[theorem]{Example}
\newcommand{\bcon}{\begin{conjecture}}
\newcommand{\econ}{\end{conjecture}}
\newcommand{\bcor}{\begin{corollary}}
\newcommand{\ecor}{\end{corollary}}
\newcommand{\bdf}{\begin{definition}}
\newcommand{\edf}{\end{definition}}
\newcommand{\benu}{\begin{enumerate}}
\newcommand{\eenu}{\end{enumerate}}
\newcommand{\beq}{\begin{equation}}
\newcommand{\eeq}{\end{equation}}
\newcommand{\bexa}{\begin{example}}
\newcommand{\eexa}{\end{example}}
\newcommand{\bexe}{\begin{exercise}}
\newcommand{\eexe}{\end{exercise}}
\newcommand{\bfac}{\begin{fact}}
\newcommand{\efac}{\end{fact}}
\newcommand{\bite}{\begin{itemize}}
\newcommand{\eite}{\end{itemize}}
\newcommand{\blem}{\begin{lemma}}
\newcommand{\elem}{\end{lemma}}
\newcommand{\bmat}{\begin{pmatrix}}
\newcommand{\emat}{\end{pmatrix}}
\newcommand{\bprb}{\begin{problem}}
\newcommand{\eprb}{\end{problem}}
\newcommand{\bpro}{\begin{proposition}}
\newcommand{\epro}{\end{proposition}}

\newcommand{\bque}{\begin{question}}
\newcommand{\eque}{\end{question}}
\newcommand{\brem}{\begin{remark}}
\newcommand{\erem}{\end{remark}}
\newcommand{\bthm}{\begin{theorem}}
\newcommand{\ethm}{\end{theorem}}

\newcommand{\bpr}{\begin{proof}}
\newcommand{\epr}{\end{proof}}
\newcommand{\ignore}[1]{}

\newcommand{\no}[1]{}

\no{
\usepackage[utf8]{inputenc}
\usepackage{color}

\usepackage[letterpaper,total={6.5in,9in},top=1in, left=1in, right=1in, bottom=1in]{geometry}

\usepackage{fancyhdr}
\pagestyle{fancy}
\fancyhead{}
\fancyhead[CE]{{\bf {\scriptsize RHEA PALAK BAKSHI, THANG T. Q. LÊ, AND JÓZEF H. PRZYTYCKI}}}
\fancyhead[CO]{{\bf {\scriptsize KAUFFMAN BRACKET SKEIN MODULE OF THE CONNECTED SUM OF TWO SOLID TORI}}}
\fancyfoot{}
\fancyfoot[C]{{\scriptsize \thepage}}
\setlength{\footskip}{20pt}
\setlength\headheight{14pt}
\usepackage{libertine}
\usepackage[libertine]{newtxmath}
\usepackage{color,amsmath,mathtools,graphicx}
\usepackage{bbold}
\usepackage{epsfig,fancyhdr}
\usepackage{dsfont} 
\usepackage{graphicx,amsmath,mathtools,float}
\usepackage{epsfig,color,fancyhdr,setspace}
\usepackage{dsfont}
\usepackage{epstopdf}
\usepackage{import}
\usepackage{lineno}
\usepackage{stackrel,dsfont}
}
\usepackage[allcolors = blue,colorlinks]{hyperref}

\usepackage[abs]{overpic}

\no{
\usepackage{subcaption}
}
\usepackage{tikz-cd}
\usepackage{enumitem}
\counterwithin{figure}{section}
\usepackage[utf8]{inputenc}
\usepackage[T1]{fontenc}
\usepackage{ragged2e}
\usepackage{extarrows}

\no{
\newtheorem{theorem}{Theorem}[section]
\newtheorem{thm}{Theorem}
\newtheorem{lemma}[theorem]{Lemma}
\newtheorem{definition}[theorem]{Definition}
\newtheorem{example}[theorem]{Example}
\newtheorem{proposition}[theorem]{Proposition}
\newtheorem{remark}[theorem]{Remark}
\newtheorem{corollary}[theorem]{Corollary}
\newtheorem{conjecture}[theorem]{Conjecture}
\newtheorem{problem}[theorem]{Problem}

\newtheorem{exercise}[theorem]{Exercise}

\newtheorem{question}[theorem]{Question}
\newtheorem*{theorem*}{Theorem}
\newtheorem*{theorem**}{Theorem}

}

\newcommand\inclpdf[2]{
\raisebox{-.48 \height}{\includegraphics[height=#1]{draws/#2.pdf}}}

\no{
\newcommand{\bcon}{\begin{conjecture}}
\newcommand{\econ}{\end{conjecture}}
\newcommand{\bcor}{\begin{corollary}}
\newcommand{\ecor}{\end{corollary}}
\newcommand{\bdf}{\begin{definition}}
\newcommand{\edf}{\end{definition}}
\newcommand{\benu}{\begin{enumerate}}
\newcommand{\eenu}{\end{enumerate}}
\newcommand{\beq}{\begin{equation}}
\newcommand{\eeq}{\end{equation}}
\newcommand{\bexa}{\begin{example}}
\newcommand{\eexa}{\end{example}}
\newcommand{\bexe}{\begin{exercise}}
\newcommand{\eexe}{\end{exercise}}
\newcommand{\bfac}{\begin{fact}}
\newcommand{\efac}{\end{fact}}
\newcommand{\bite}{\begin{itemize}}
\newcommand{\eite}{\end{itemize}}
\newcommand{\blem}{\begin{lemma}}
\newcommand{\elem}{\end{lemma}}
\newcommand{\bmat}{\begin{pmatrix}}
\newcommand{\emat}{\end{pmatrix}}
\newcommand{\bprb}{\begin{problem}}
\newcommand{\eprb}{\end{problem}}
\newcommand{\bpro}{\begin{proposition}}
\newcommand{\epro}{\end{proposition}}

\newcommand{\bque}{\begin{question}}
\newcommand{\eque}{\end{question}}
\newcommand{\brem}{\begin{remark}}
\newcommand{\erem}{\end{remark}}
\newcommand{\bthm}{\begin{theorem}}
\newcommand{\ethm}{\end{theorem}}
\newcommand{\bpr}{\begin{proof}}
\newcommand{\epr}{\end{proof}}

}

\def\BZ{\mathbb Z}

\def\BQ{\mathbb Q}

\def\al{\alpha}

\def\be { \begin{equation} }
\def\ee { \end{equation} }

\def\SM{\cS(M)}

\def\id{\mathrm{id}}

\def\pM{\partial M}
\def\cS{\mathscr S}
\def\ot{\otimes}

\def\embed{\hookrightarrow}
\def\onto{\twoheadrightarrow}
\def\Zq{{\mathbb Z[q^{\pm1}]}}
\def\cK{{\mathcal K}}

\newcommand{\red}[1]{{\color{red}#1}}

\def\SH{{\cS(H_1 \# H_1)}}
\def\HH{ H_1 \ \# \ H_1}

\title{Kauffman bracket skein module of the connected sum of two solid tori}

\address{Department of Mathematics, University of California, Santa Barbara}
\email{{\rm rheapalak@math.ucsb.edu $|$ rheapalakbakshi@gmail.com}}
\author{ Rhea Palak Bakshi }
\address{Department of Mathematics, Georgia Institute of Technology, Atlanta, USA}
\email{{\rm letu@math.gatech.edu}}
\author{ Thang T. Q. L\^e}

\address{Department of Mathematics, The George Washington University, Washington DC, USA and Department of Mathematics, University of Gda\'{n}sk, Poland}
\email{{\rm przytyck@gwu.edu}}
\author{J\'ozef H. Przytycki}

\keywords{Kauffman bracket, skein module, $3$-manifold invariant, link invariant, Chebyshev polynomial, connected sum}
\subjclass[2020]{Primary: 57K31. Secondary: 57K10, 57K16}

\begin{document}
\begin{abstract}

We determine the structure of the Kauffman bracket skein module of the connected sum of two genus one handlebodies over the ring of Laurent polynomials $\mathbb Z[q^{\pm 1}]$, thereby proving a conjecture posed by the first and third authors. Our results lay the groundwork for computing the Kauffman bracket skein module of arbitrary connected sums over the ring $\mathbb Z[q^{\pm 1}]$. 

\end{abstract}

\maketitle

\section{Introduction}\label{introsectionh1h1}

\subsection{Kauffman bracket skein modules and algebras}  The Kauffman bracket skein module is an invariant of 
$3$-manifolds that has played a central role in low-dimensional topology and knot theory. Skein modules were introduced independently by the third author~\cite{smof3} and by Turaev~\cite{turaevsolidtorus,Turaev1}, and they provides a powerful algebraic framework for studying links embedded in 
$3$-manifolds and for connecting classical topology with quantum invariants. Given an oriented 
$3$-manifold 
$M$, possibly with non-empty boundary 
$\partial M$, the Kauffman bracket skein module, $\SM$ is defined as the module over the Laurent polynomial ring $R=\Zq$
 generated by isotopy classes of unoriented framed links in $M$, modulo the Kauffman bracket skein relations~\cite{tlnkauffman}. Skein modules serve as a natural bridge between topology, geometry, and quantum algebra. They are closely related to algebraic geometry via $SL(2,\mathbb{C})$ character varieties \cite{sl2cbullock, ps1}, hyperbolic geometry  via quantum Teichmüller spaces and quantum cluster algebras \cite{bw1,fokchekhov,lequantum,kashaev,mullerskein}, the AJ conjecture \cite{FGL,2bk}, and the Witten-Reshetikhin-Turaev $3$-manifold invariants and Topological Quantum Field Theories \cite{BHMV1}, to name a few.


\subsection{Calculations of skein modules}

The Kauffman bracket skein module (KBSM), over the ring $\Zq$,  has been fully calculated  only for a few classes of manifolds. These include lens spaces \cite{kbsmlens}, $S^1 \times S^2$ \cite{s1s2}, the Whitehead manifold \cite{whitehead}, $3$-manifolds obtained from integral surgery on the trefoil knot \cite{trefoilbullock}, the product of the pair of pants with $S^1$ \cite{pairofpantss1}, a family of prism manifolds \cite{prism}, $\mathbb RP^3 \ \# \ \mathbb RP^3$ \cite{rp3rp3}, and the exteriors of two-bridge knots and links \cite{2bk,2bl}. In \cite{connsum}, the third author stated a theorem about the structure of the KBSM of the connected sum of two handlebodies of arbitrary genera, over the ring $\mathbb{Z}[q^{\pm 1}]$. This ``theorem''  was disproved in \cite{counterhandle}  for connected sums involving handlebodies that are not both solid tori. However, the first and third authors conjectured that the statement about the structure of the KBSM over $\mathbb{Z}[q^{\pm 1}]$ remains valid when both summands are solid tori.

\def\hR{\hat R}
\def\fG{\mathcal{G}}
In this paper, we refine this conjecture and completely determine the structure of the Kauffman bracket skein module of $H_1 \ \# \ H_1$.
Note that this $3$-manifold is also the complement of the trivial link with two components. 
For a positive integer $k$, let
$ \{k\} = q^{2k} - q^{-2k} 
$. Define 
the Chebyshev polynomials of the second kind $S_n (x) \in \BZ[x]$ recursively by
\be 
S_0(x)=1, \ S_1(x)=x, \ S_n(x) = x S_{n-1}(x) - S_{n-2}(x).
\ee
Let $\hR= \BZ[x_1, x_2]$, the ring of polynomials in two formal variables $x_1$ and $x_2$.
The bigger ring $R[x_1, x_2, y]$ is an $\hR$-module. Consider its $\hR$-submodule
$ \fG\subset \BZ[x_1, x_2,y]$  spanned by $\{  G_n, n \ge 1\}$, where
$$ G_n =  \{n+1\}S_n(y) +  (-1)^{(n+1)} \{1\}S_n(x_1)S_n(x_2) \}.  $$

 Let $H_1$ be the solid torus. Our main result is the following. 

\begin{thm} \label{thm1}
Let the ground ring $R$ be $\Zq$. There is an explicit  $R$-linear isomorphism
$$ \phi: R[x_1, x_2, y]/\fG \xrightarrow {\cong} \cS(H_1 \# H_1) .  $$

\end{thm}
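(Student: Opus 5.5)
Write $M=H_1\# H_1$. We realize it as $F\times[0,1]$ with a $3$-ball glued along the outer boundary. Here $F$ is a disk with two holes, so $F\times[0,1]$ is the genus two handlebody $H_2$. Equivalently, $M$ is obtained from $H_2$ by attaching a $2$-handle along a separating curve $\gamma$ on $\partial H_2$; the co-core of that $2$-handle is the separating sphere.

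By Przytycki's handle-sliding description, $\cS(M)$ is the quotient of $\cS(H_2)$ by the relations $\ell - \ell'$. Here $\ell'$ is obtained from a framed link $\ell$ by sliding it over the $2$-handle. Since $\cS(H_2)=\cS(F\times I)$ is free on monomials $x_1^a x_2^b y^c$, with $x_1,x_2$ the two boundary curves around the holes and $y$ the curve around both holes, this gives $\cS(H_2)\cong R[x_1,x_2,y]$. We define $\phi$ as the induced map. It is clearly $R$-linear and surjective.

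\textbf{Step 1 (the relations lie in the kernel).} Take the basic arc configuration: $n$ parallel strands passing between the two halves across the sphere. Sliding these over the $2$-handle gives the standard relation. In a ball around the separating sphere we use the Jones--Wenzl idempotent / Chebyshev formalism. A full twist, i.e. sliding the $n$-colored strand around the sphere, acts on the Jones--Wenzl idempotent $f_n$ by the twist eigenvalue. Closing the strands up in two different ways gives the two curves $S_n(y)$ and $S_n(x_1)S_n(x_2)$, up to lower terms. The resulting identity should be exactly $G_n=0$: the coefficient $\{n+1\}$ comes from $q^{2(n+1)}-q^{-2(n+1)}$ of the twist on $f_n$, and the $\{1\}$ factor together with the sign $(-1)^{n+1}$ comes from the complementary closure. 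I would first check $n=1$ by a direct Kauffman bracket computation. I would then do general $n$ by induction using the Chebyshev recursion, which also shows that $\hR$-multiples of $G_n$ arise from sliding the same arcs with extra parallel copies of $x_1,x_2$ attached away from the sphere.

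\textbf{Step 2 (these relations generate the kernel).} Every handle-slide relation has the form $\ell-\ell'$, where $\ell$ meets a neighborhood of $\gamma$ in some arcs. We reduce to the case where $\ell$ is of the form $z\cdot(\text{arcs } n)$ with $z\in\hR$. Using skein relations in the annulus around $\gamma$, any link there decomposes into idempotent-colored strands crossing $\gamma$, with coefficients in $R[x_1,x_2,y]$. Sliding commutes with the action of $x_1,x_2$, because these curves can be isotoped off the slide region. It does not commute with the action of $y$, so we filter by $y$-degree. Using $S_n(y)S_m(y)=\sum S_{k}(y)$, we show that the slide relation for $y^c$ times the $n$-arc picture lies in $\sum_k \hR\, G_k$. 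This is the step I expect to be the main obstacle: controlling how multiplication by $y$ interacts with the relations, i.e. showing that $\fG$ is closed under the needed $y$-operations modulo $\fG$ itself, even though it is only an $\hR$-module.

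\textbf{Step 3 (no further collapse).} It remains to show that $\phi$ is injective, i.e. that the kernel is exactly $\fG$ and not larger. Since Step 2 identifies the kernel with the span of the slide relations, this follows once every slide relation is expressed through the $G_n$, which Step 2 provides. As a sanity check on the torsion, after inverting all $\{k\}$ the quotient becomes free on $\hR$-multiples of $1$. I would verify this against the known structure of $\cS(M)$ over $\BQ(q)$, which is free and generated by the empty link together with the $\hR$-module structure.
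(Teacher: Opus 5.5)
\textbf{There is a genuine gap.} Your skeleton ($H_2$ plus a $2$-handle, kernel generated by slides) is the same as the paper's. However, the proposal never establishes either inclusion $\fG\subseteq\ker\Phi$ or $\ker\Phi\subseteq\fG$.

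\emph{The three steps are not carried out.} Step 1 ends with ``the resulting identity should be exactly $G_n=0$'', which is a guess. Step 2 leaves its key claim open, as you say yourself. Step 3 is circular: injectivity of $\phi$ \emph{is} the inclusion $\ker\Phi\subseteq\fG$ that Step 2 was supposed to supply.

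\emph{Your tool cannot work over $\Zq$.} Jones--Wenzl idempotents, twist eigenvalues on $f_n$, and decomposing strands near $\gamma$ into idempotent-coloured strands all require inverting quantum integers. But $\SH$ has $\Zq$-torsion, and that is exactly what the theorem pins down. For instance, $G_n/\{1\}\in\hR[y]$ maps to a nonzero element of $\SH$ killed by $\{1\}$. An argument run after localization can only show $cG_n\in\ker\Phi$ for some nonzero $c$; the sphere argument of \cite{CL} essentially gives $\{n\}G_n$. Such an argument cannot tell $G_n$ from $G_n/\{1\}$, so it cannot determine $\ker\Phi$. Your sanity check over $\BQ(q)$ is blind in the same way.

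\emph{The $y$-filtration plan in Step 2 cannot work formally.} $\fG$ is $\hR$-free on the $G_k$, and $yG_n$ has leading term $\{n+1\}y^{n+1}$. Since $\{n+2\}\nmid\{n+1\}$ in $\Zq$, we get $yG_n\notin\fG$. So the $y$-dependence of the slide relations has to be computed explicitly; it cannot be propagated from $S_nS_m=\sum S_k$.

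\textbf{What is missing.} The paper stays integral by using single-arc slides.
\begin{itemize}
\item $\ker\Phi$ is spanned by $w(\alpha)=\alpha_{(1)}-\alpha_{(2)}$ for $(u,v)$-relative links $\alpha$.
\item $\SDr$ is spanned as a right $\cS(D_2)$-module by four arcs $a_1,\dots,a_4$. Hence $\ker\Phi=\cK_1+\cK_2+\cK_3+\cK_4$ with $\cK_i=w(a_i*\hR[y])$.
\item The heart of the proof is an exact $\Zq$-expansion, in the basis $x_1^ax_2^bS_k(y)$, of the curve $\sigma_n$ produced by sliding $a_1*\hT_n(y)$. This is the Esebre--Gelca identity \cite{EG}.
\item With that formula, the normalized generators $F_n$ of $\cK_1$ satisfy $F_1=G_1$ and $F_n=U_n-q^{4-4n}U_{n-2}$. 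Here $U_n$ is a unitriangular $\hR$-combination of $G_n,G_{n-1},\dots$, whose $G''$-parts cancel ($U''_n=0$) by a linear-recurrence argument. This gives $\cK_1=\fG$, both inclusions at once.
\item You also never treat slides of the other arcs $a_2,a_3,a_4$, which contribute to the kernel a priori. The paper shows $\cK_i\subseteq\cK_1$ by direct diagram computations: $w(a_2z)=0$, and the case $a_4$ uses the reflection $\tau$ together with $\tau(G_k)=-G_k$, hence $\tau(\cK_1)=\cK_1$, to absorb the $\tau(z_k)$ terms that appear.
\end{itemize}
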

Roughly speaking,  $\phi(x_1^{n_1} x_2^{n_2} y^n)$ is the union of $n_1$ parallel copies of a core  $\overline x_1$ of the first copy of $H_1$, $n_2$ parallel copies of a core  $\overline x_2$ of the second copy of $H_2$, and $n$ parallel copies of a connected sum $\overline y$ of $\overline x_1$ and $\overline x_2$. The conjecture of the first and third author in \cite{counterhandle} does not describe the kernel $\fG$ explicitly, which we do. For details see Section \ref{sec.maintheorem}.

\no{ In $H_1 \# H_1$ consider the following framed knots. First $x_1$ is the core
of the first copy of $H_1$ with arbitrary framing. Similarly $x_2$ second copy of $H_1$, and a connected sum $\bar y $ of $\bar x_1$ and $\bar x_2$ in $H_1 \# H_1$. Then $\phi(x_1^{n_1} x_2^{n_2} y^n)$ is the union of

\begin{theorem*}

Let $H_n$ denote a genus $n$ handlebody and $F_{0,n+1}$ be a disc with $n$ holes so that $H_n = F_{0,n+1} \times I$. Then,
$${\mathcal S}(H_1 \ \# \ H_1) = {\mathcal S}(H_{2})/\mathcal I,$$ \\ 
 where $\mathcal I$ is the submodule generated by the expressions $z_k-q^6u(z_k)$, 
for any even $k\geq 2$. Here $z_k \in B_k(F_{0,3})$, where $B_k(F_{0,3})$ is a subset of the standard basis of $\mathcal{S}(F_{0,3} \times I)$ composed of links that
have geometric intersection number $k$ with a disc $D$ that separates
the two $H_1$'s. Furthermore, $u(z_k)$ is a modification of $z_k$
in the neighbourhood of $D$, as shown in Figure \ref{fig1.1}.
The relation $z_k = q^6u(z_k)$ is a result of the sliding relation
$z_k = sl_{\partial D}(z_k)$ illustrated in Figure \ref{fig1.2}. More precisely, the natural epimorphism $i_*: \mathcal{S}(H_{2})/\mathcal{I} \longrightarrow \mathcal{S}(H_1 \ \# \ H_1)$ is an isomorphism.

\begin{figure}[ht]
    \centering
\begin{subfigure}{1\textwidth}
 \centering
   $\vcenter{\hbox{
\begin{overpic}[scale = 0.2]{main1}
\put(-15,25){$k$}
\put(-10,25){$\Bigg\{$}
\put(34,-10){$ z_k$}
\end{overpic} }}$ \qquad \qquad \qquad
$\vcenter{\hbox{
\begin{overpic}[scale = 0.2]{main2}
\put(23,-10){$sl_{\partial D}(z_k)$}
\end{overpic} }}$ 
\vspace{4mm}
    \label{fig1.2}
\end{subfigure}
\begin{subfigure}{1\textwidth}
    \centering
   $\vcenter{\hbox{
\begin{overpic}[scale = 0.2]{main1}
\put(34,-10){$ z_k$}
\end{overpic} }}$ \qquad \qquad \qquad
$\vcenter{\hbox{
\begin{overpic}[scale = 0.2]{main3}
\put(34,-10){$u(z_k)$}
\end{overpic} }}$
\vspace{4mm}
    \label{fig1.1}
\end{subfigure}
\end{figure}

\end{theorem*}

We also give a concise presentation of the submodule $\mathcal I$ of handle sliding relations in terms of Chebyshev polynomials of the second kind.\\
}

Our results will help in the study of the Kauffman bracket skein module of the connected sums of $3$-manifolds. The proof of our main result utilises a recently proved identity of Esebre and Gelca \cite{EG}. Furthermore, the stated skein modules of connected sums of $3$-manifolds at roots of unity exhibit exotic and strange properties, which are counterintuitive, see for example \cite{CL}.

The paper is structured as follows. In Section \ref{prelimsection}, we recall the Kauffman bracket skein module and its relative version. 
In Section \ref{sec.maintheorem}, we prove our main theorem. We also exhibit some torsion elements in the KBSM of $H_1 \ \# \ H_1$. 

\subsection{Acknowledgements}

The authors acknowledge support from the NSF grant DMS-2204001 for funding TTQL and JHP's research visits to UCSB. TTQL was partially supported by the NSF grant DMS-2203255. JHP was partially supported by the Simons Collaboration Grant 637794 and the CCAS Dean’s Research Chair award. The authors would like to thank R. Gelca for helpful discussions.

\section{Preliminaries}\label{prelimsection}

In this section, we discuss the definition of the Kauffman bracket skein module (KBSM) and algebra, along with their fundamental properties, including the behaviour of the KBSM under handle slides. Throughout the paper, the ground ring $R$ is the ring $\Zq$ of Laurent polynomials in the formal variable $q$ with integer coefficients. 

\def\SF{\cS (F)}
\def\bF{\bar F}
\def\KF{\SF}
\def\cC{\mathcal C}

\subsection{Skein modules} 
Let $M$ be an oriented 3-manifold, with possibly non-empty boundary. The Kauffman bracket skein module of $M$, denoted by $\SM$, is the $R$-module  freely spanned by all isotopy classes of unoriented framed links in $M$, including the empty link, subject to the following relations:
\begin{align}
\label{kbsr} \vcenter{\hbox{\includegraphics[scale=.07]{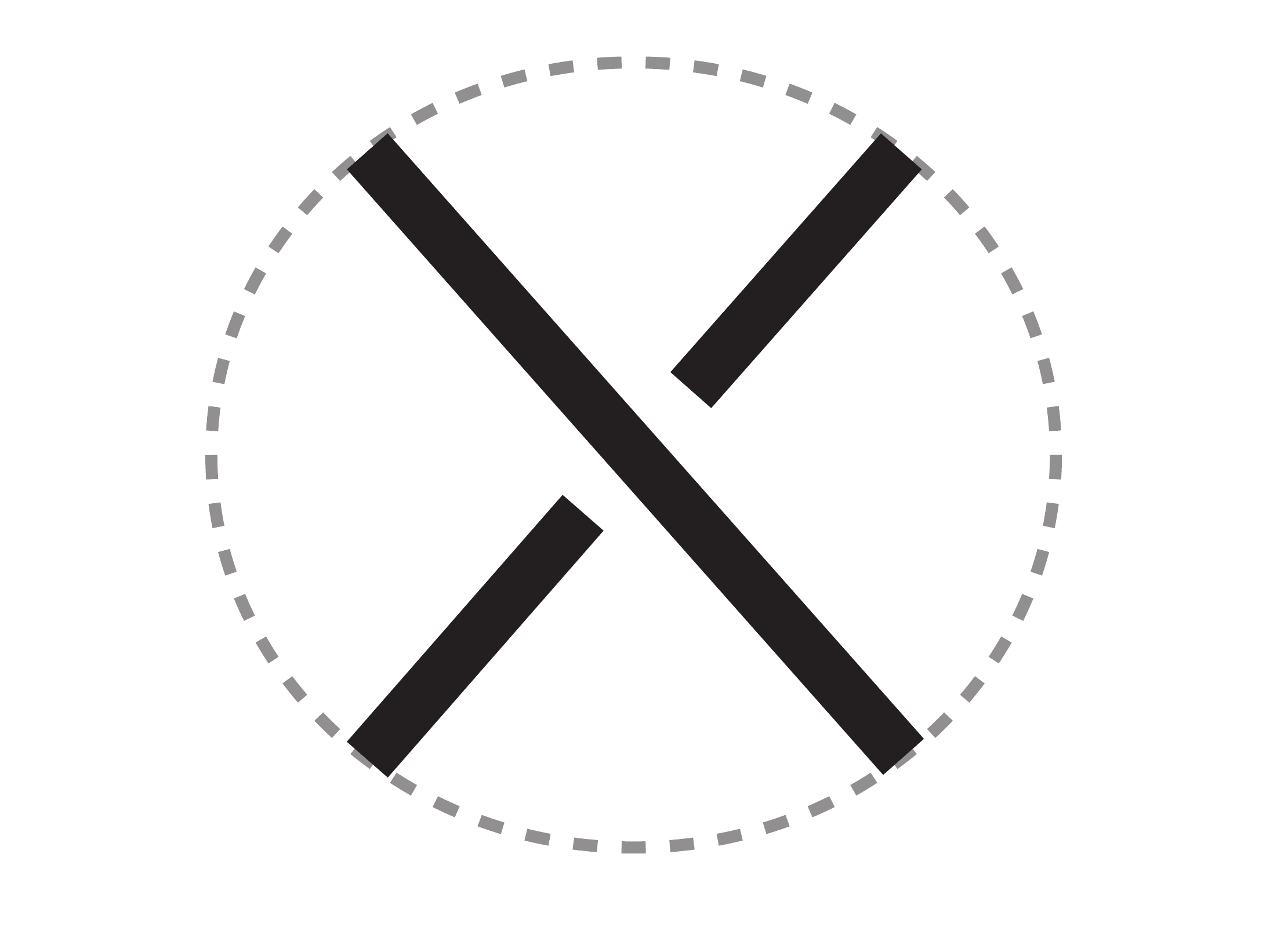}}} &- q \vcenter{\hbox{\includegraphics[scale=.07]{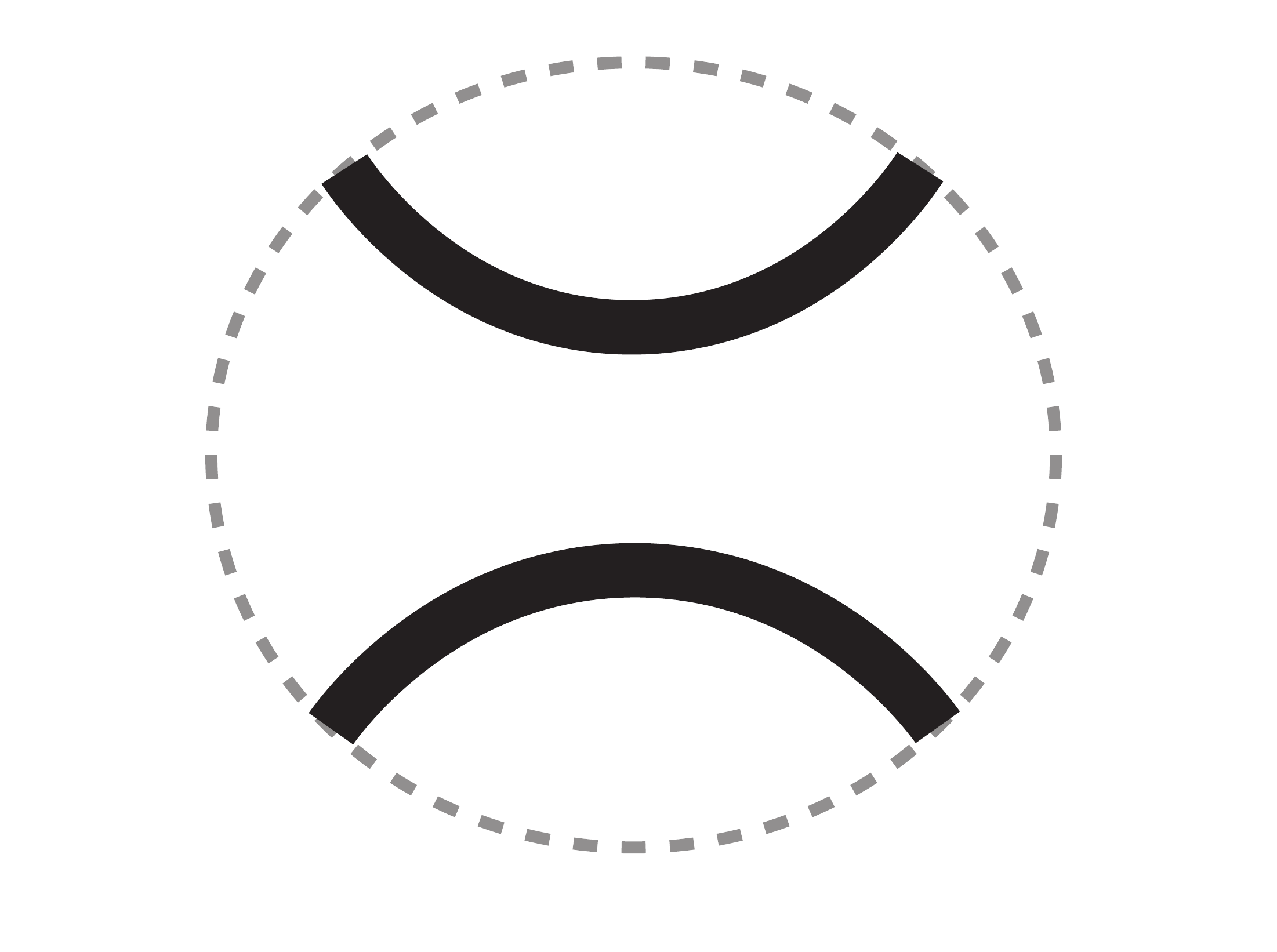}}} - q^{-1} \vcenter{\hbox{\includegraphics[scale=.07]{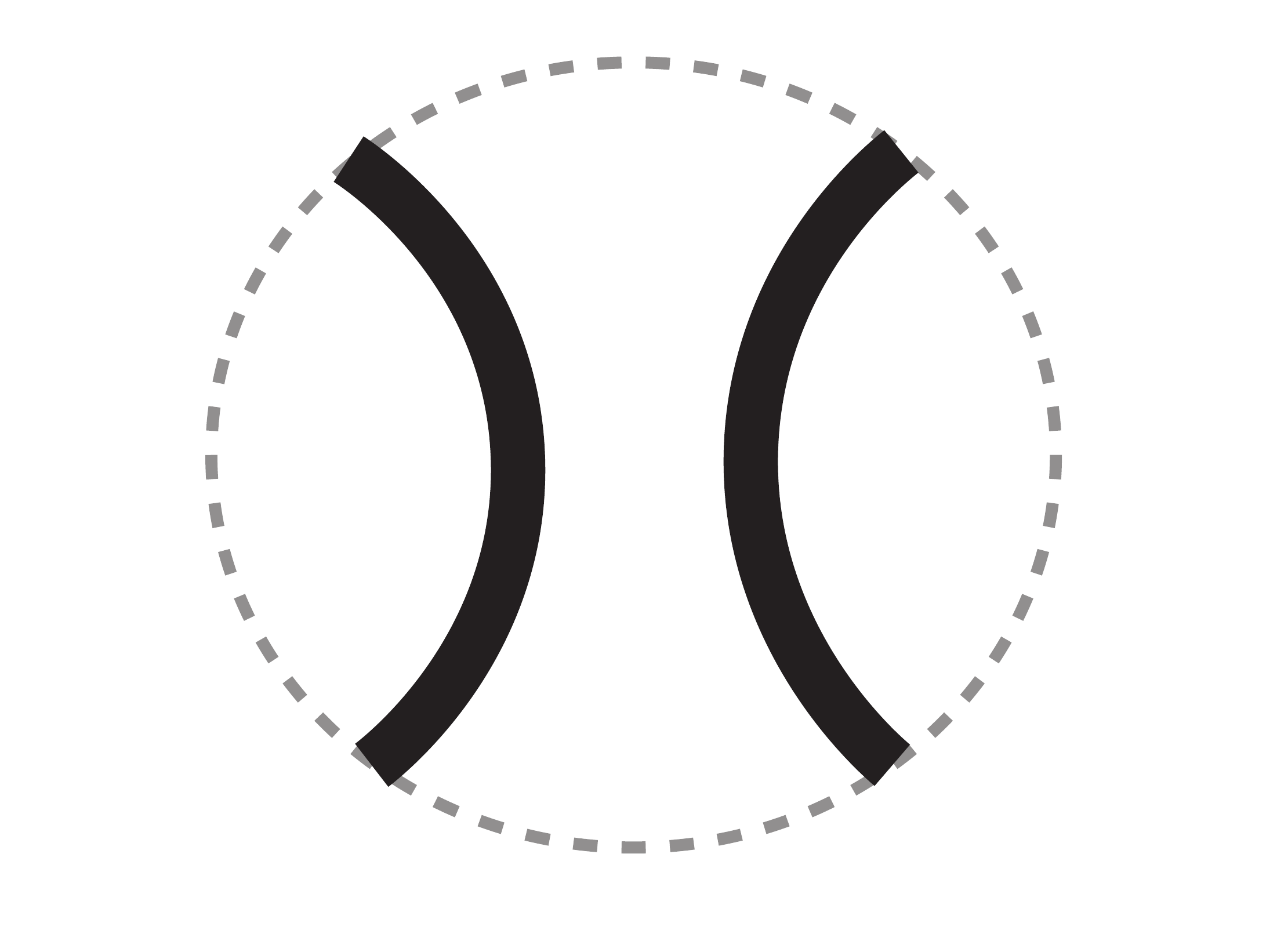}}} \ \text{and}\\
\label{trivialcomp} \vcenter{\hbox{\includegraphics[scale=.07]{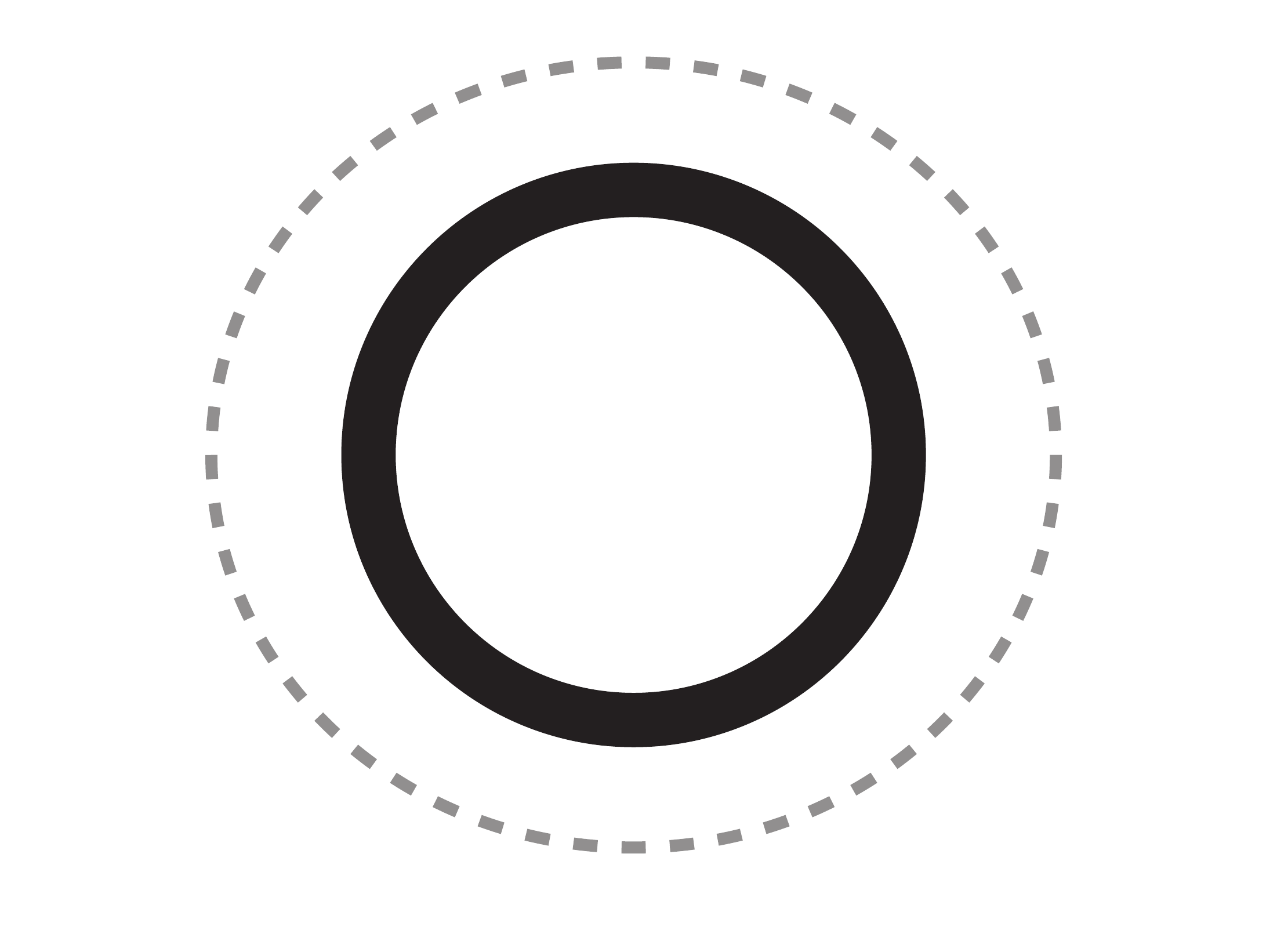}}} &+ q^2 + q^{-2}.
 \end{align}
Here the framed links in each expression are identical outside the $3$-balls pictured in the diagrams. For an unoriented framed link $L$ in $M$, we call the {\em skein class} of $L$ the element of $\SM$ determined by $L$. When there is no confusion, we will identify a framed link in $M$ with its skein class. An orientation preserving embedding $f: M \embed M'$ of oriented $3$-manifolds induces an $R$-linear map $f_*:\SM \to \cS(M')$, sending the skein class of a link $L$ to the skein class of $f(L)$ \cite{smof3,fundamentals}.
\def\sS{\mathscr S}

For an oriented surface $F$ with possibly non-empty boundary, let $\SF = \cS(F\times [-1,1])$. The $R$-module $\SF$ has an algebra structure, formally introduced by Turaev \cite{Turaev1} (see also \cite{HK,smof3}), where the product of two links is defined by placing the first link above the second in the direction given by the interval $[-1,1]$. The empty link is the identity. There is a $\BZ$-linear map $\tau : \SF \to \SF$, called the reflection,  defined by
 \be 
 \tau (q) = q^{-1}, \tau ( x, t) = (x, -t) \ \text{for } \ (x,t)\in F \times [-1,1].
 \ee
 We have $\tau(xy) = \tau(y) \tau(x)$ and $\tau^2=\id$. 
From the defining relations in the Kauffman bracket skein  module, one can easily derive the following relation, known as the {\it framing relation}.
\begin{align}
 \label{eq.kink}
 -q^{-3}\vcenter{\hbox{\includegraphics[scale=.07]{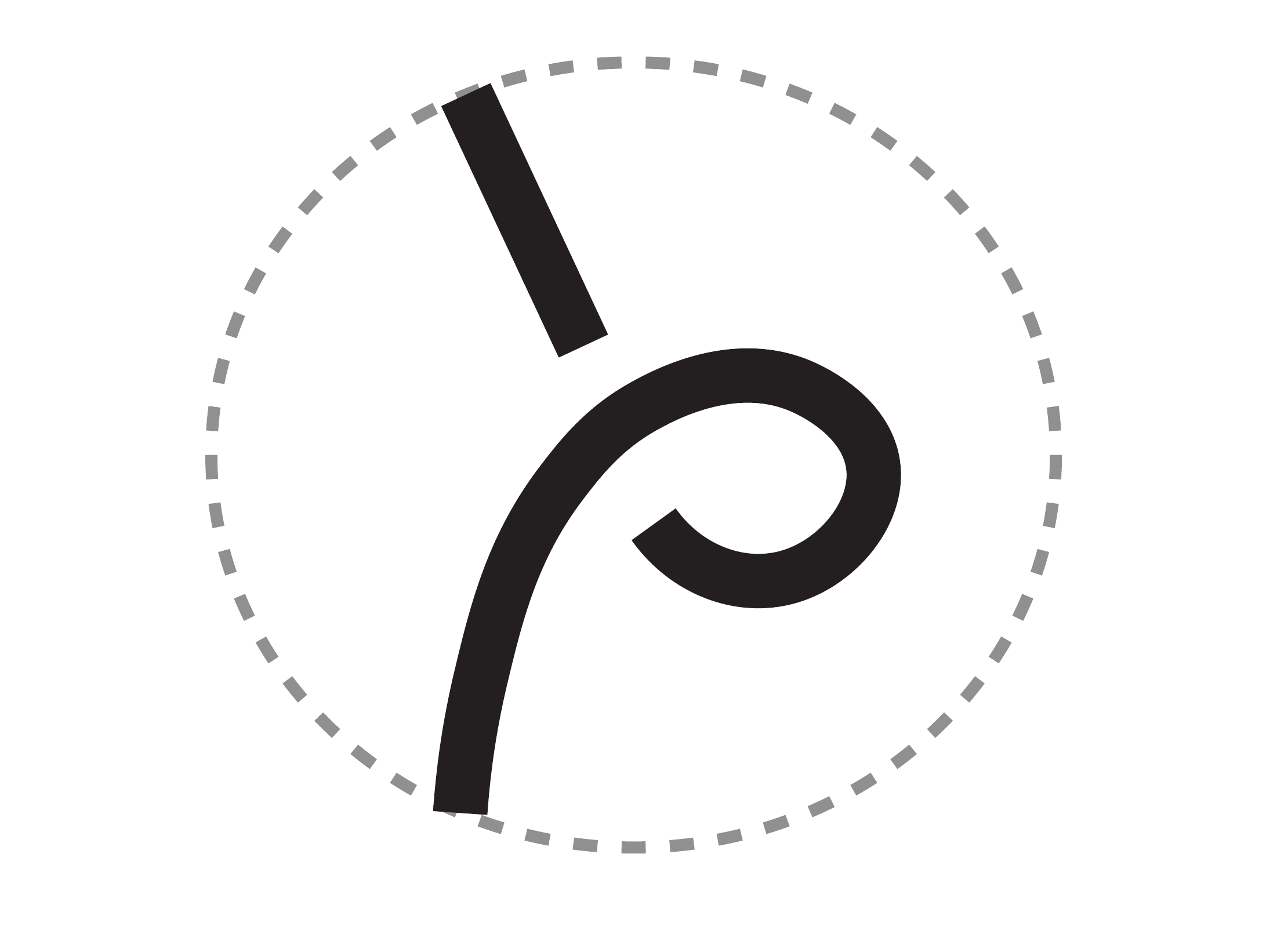}}} =\vcenter{\hbox{\includegraphics[scale=.07]{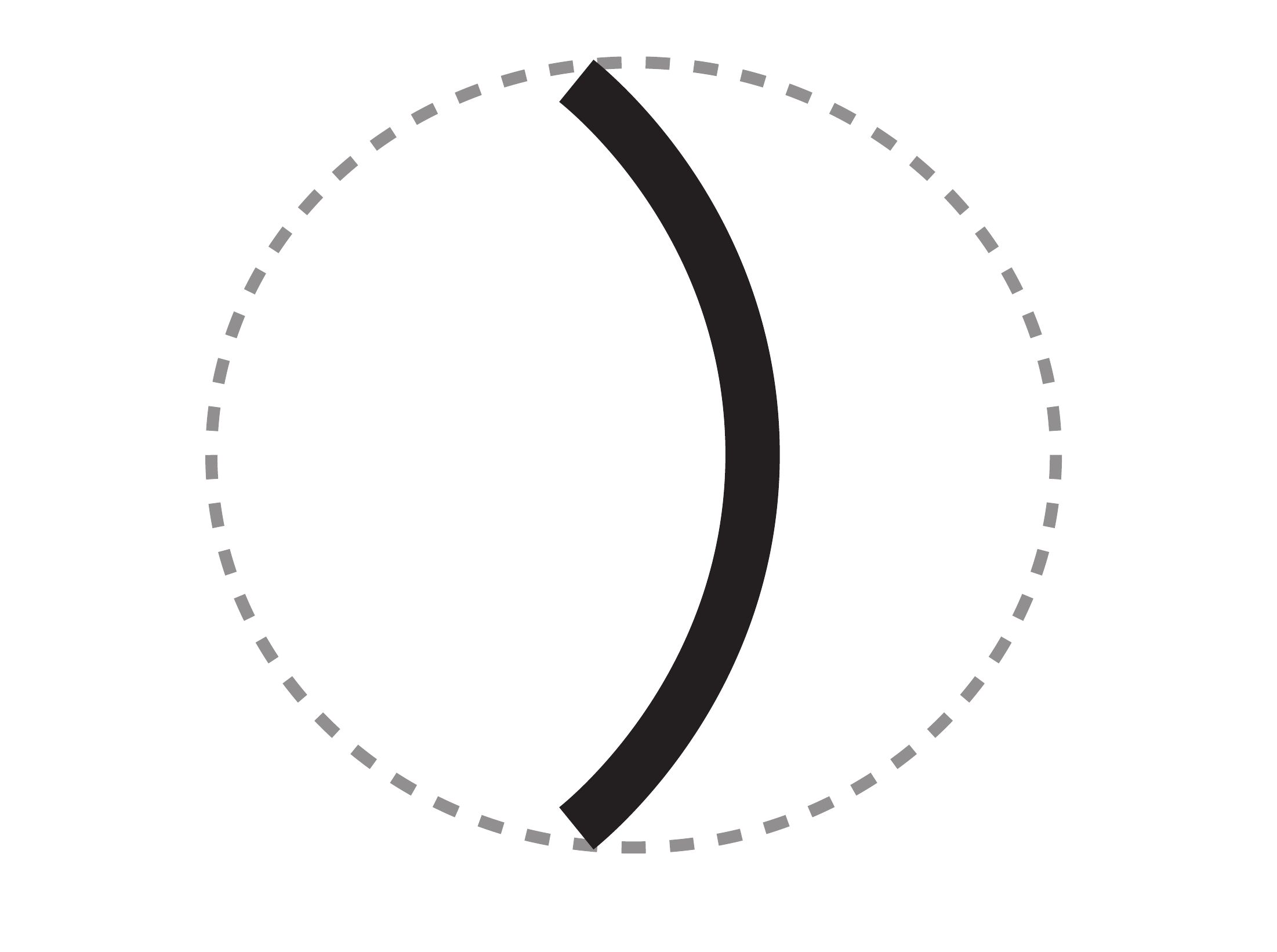}}} = -q^{3} \vcenter{\hbox{\includegraphics[scale=.07]{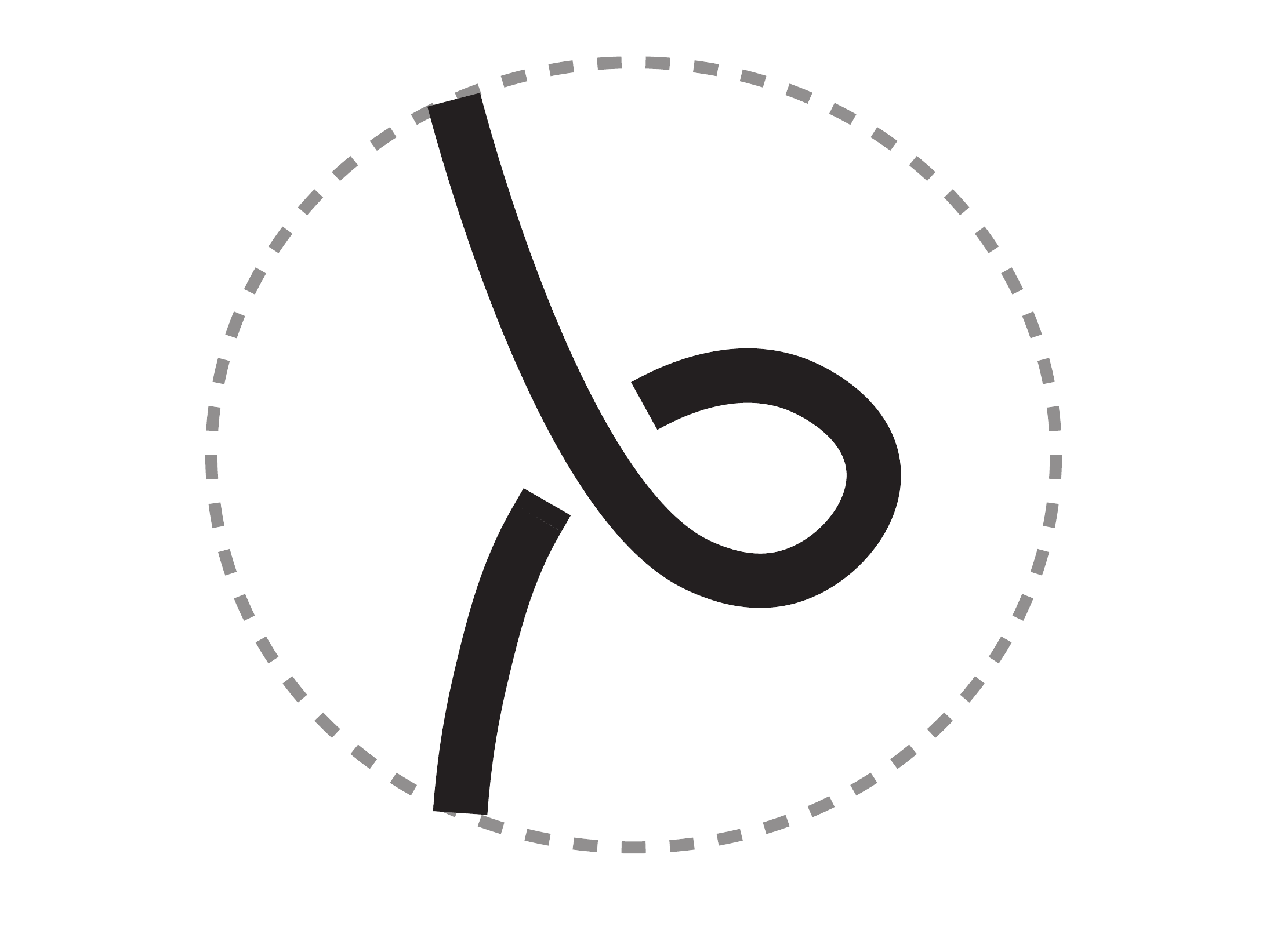}}}
\end{align}

\subsection{Attaching handles to $3$-manifolds} We now review how the Kauffman bracket skein module changes under the operation of $2$-handle attachment.

Let $M'$ be the result of attaching a $2$-handle to $M$. By \cite{smof3, fundamentals},  the embedding $M \embed M'$ induces a surjective $R$-homomorphism $ \SM 
\twoheadrightarrow 
\cS(M')$ whose kernel is spanned by $L - L'$, where the link $L$ is a generator of $\SM$ and $L'$ is a slide of $L$ over the $2$-handle. For a given framed link $L$ there are many slides of it. It is much more convenient to work with relative links, or tangles, where each relative link gives rise to exactly one sliding pair.

A {\em marked point} is a point $v\in \pM$ equipped with a ray in the tangent space of $\pM$ at $v$, starting at $v$. Suppose $u,v$ are two marked points in $\pM$. A {\em $(u,v)$-relative framed link} in $M$ is the disjoint union of a framed link in $M$ and a framed arc $\alpha$ whose endpoints are $u$ and $v$, such that the tangents of $a$ at $u$ and $v$ match the marking of $u$ and $v$ and the framing of $\alpha$ at $u$ and $v$ is the normal vectors of $M$. Two $(u,v)$-relative framed links are {\em isotopic} if there is an ambient isotopy of $M$ pointwise fixing $\pM$, which moves the first to the second.
The relative skein module $\cS(M;\{ u,v\})$, \cite{smof3,HP,fundamentals},  is the $R$-module freely spanned by isotopy classes of $(u,v)$-relative links modulo the same relations \eqref{kbsr} and \eqref{trivialcomp}.

Assume $M'$ is obtained by attaching a 2-handle to $M$ along a simple closed curve $\gamma$ on $\pM$. Choose two points $u,v\in \gamma$, and equip them with markings, which are locally on the same side of $\gamma$ in $\pM$. For a $(u,v)$-relative framed link $\al$, define the framed link $\al_{(1)}$ and $\al_{(2)}$ as in Figure \ref{bullofig}. Thus, $\al_{(i)}= \al \cup \gamma_i$ with the obvious smoothing at $u$ and $v$.

\begin{figure}[h]
    \centering
    \begin{subfigure}{.325\textwidth}
    \centering
\begin{overpic}[unit=1mm, scale = 0.1]{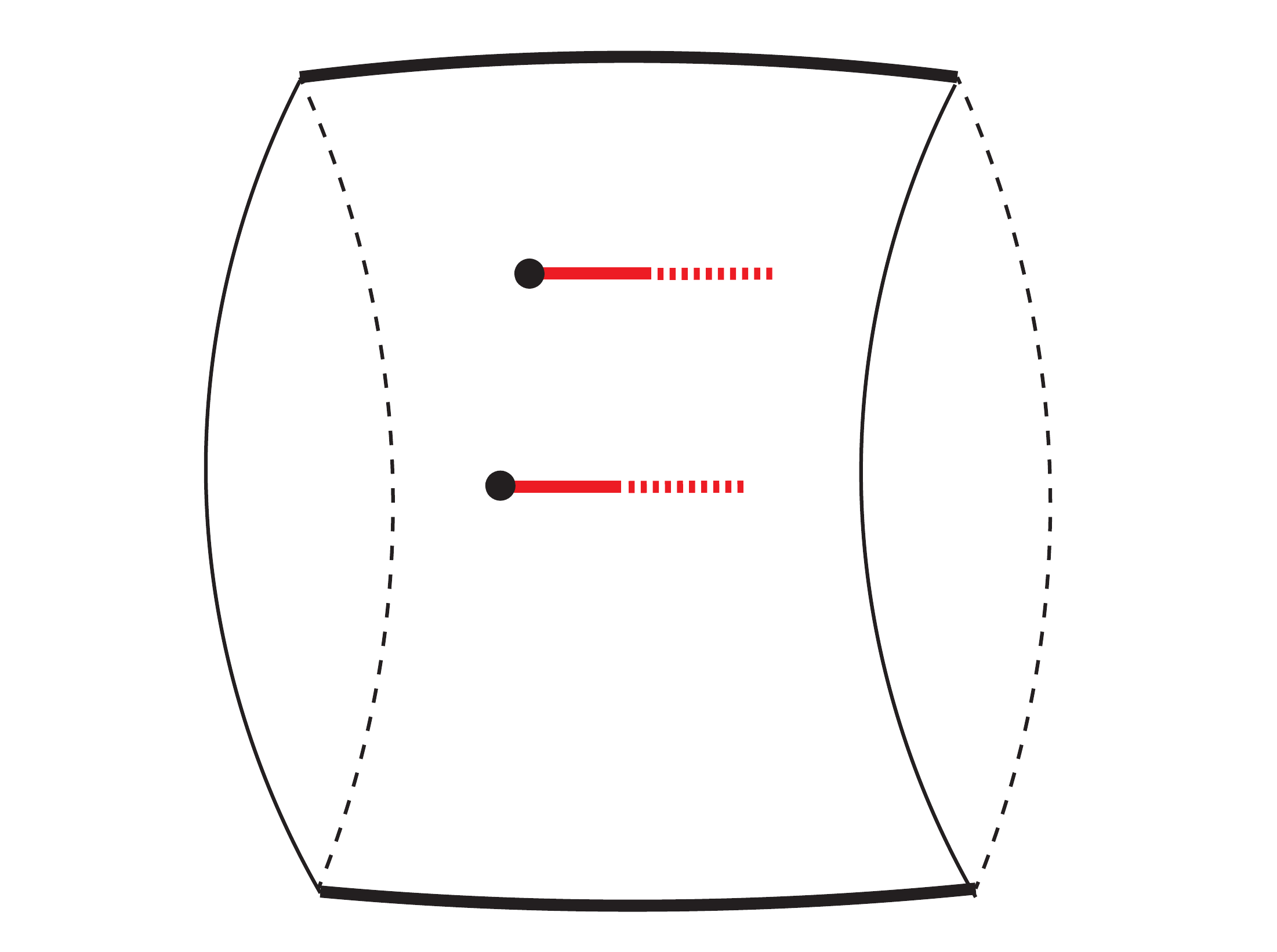}
 \put(12,13){\footnotesize{$v$}} 
 \put(13,19){\footnotesize{$u$}}
 \put(3,15){$\bigg\{$}
 \put(1.5,15.5){\tiny$\alpha$}
\end{overpic}
\subcaption{$\alpha \in \cS(M;\{u,v\})$}
 \label{bullofig1}
\end{subfigure}
\begin{subfigure}{.325\textwidth}
\centering
\begin{overpic}[unit=1mm, scale = .1]{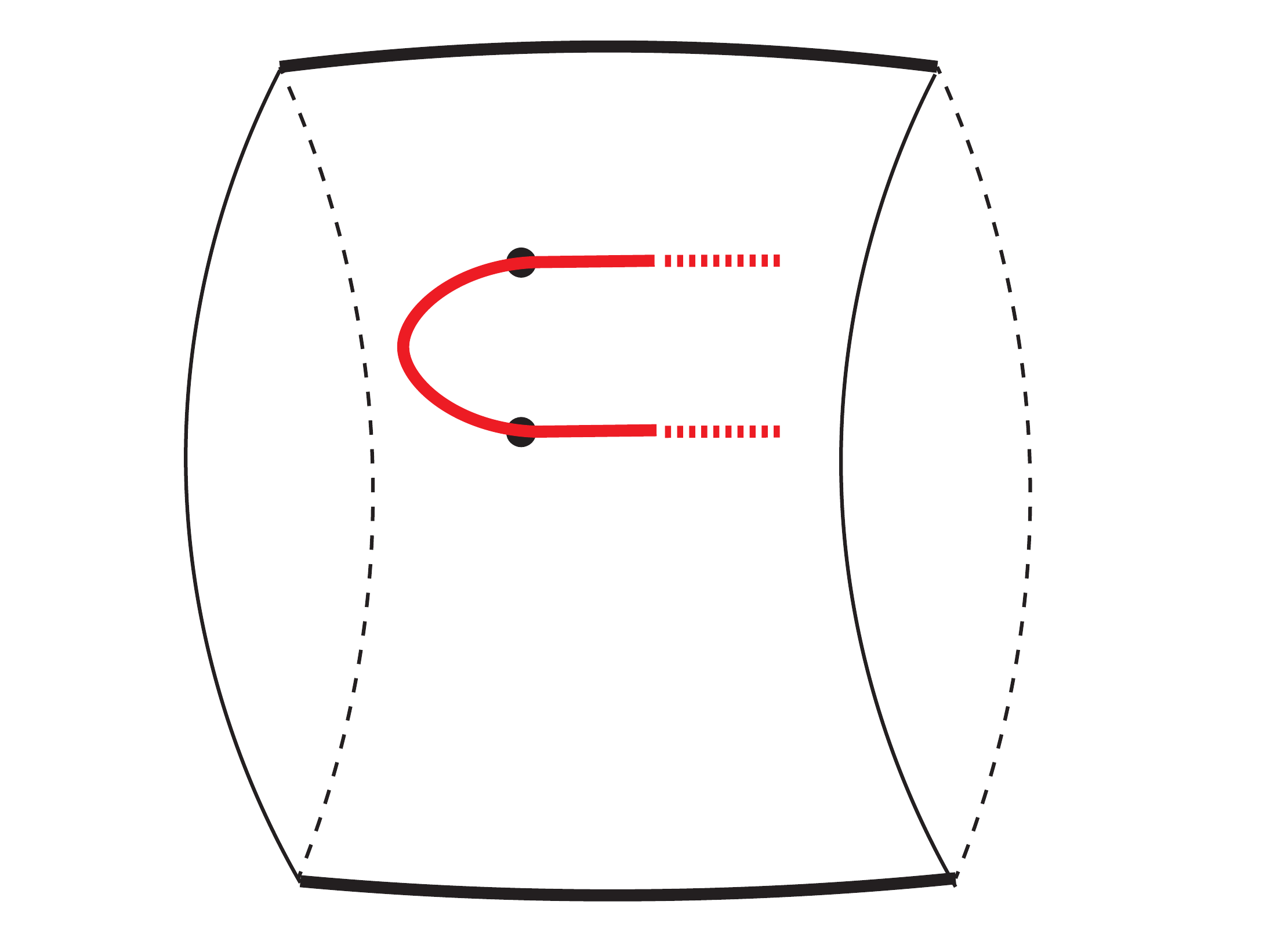}
 \put(10.5,21.5){\footnotesize{ $\gamma_2$}}

\end{overpic}
\subcaption{$\alpha_{(2)}=\alpha \cup \gamma_2$}
\end{subfigure}
\begin{subfigure}{.325\textwidth}
\centering
\begin{overpic}[unit=1mm, scale = .1]{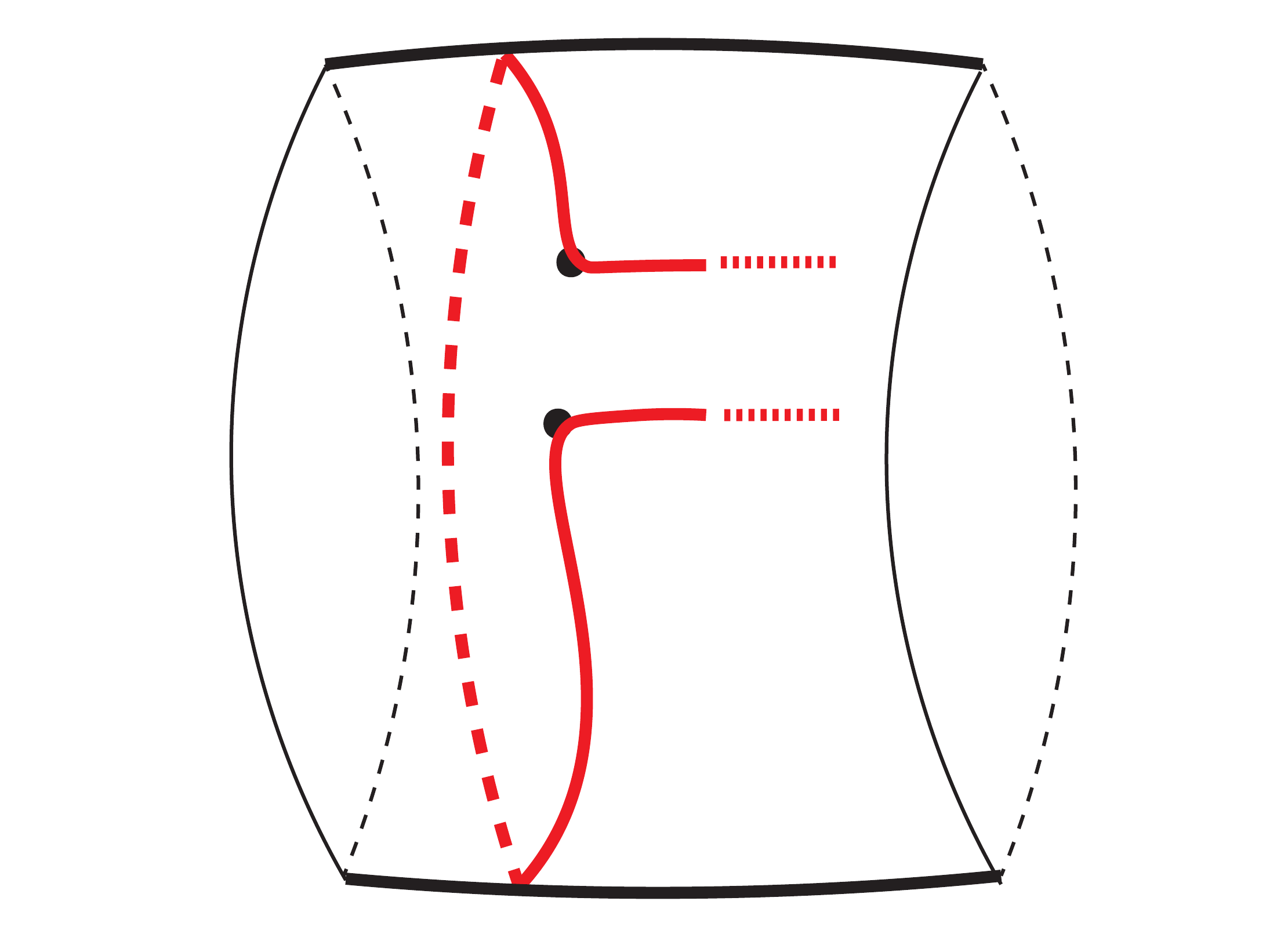}
 \put(13,17){\footnotesize{$\gamma_1$}}

\end{overpic} 
\subcaption{$\alpha_{(1)}=\alpha \cup \gamma_1$}
\end{subfigure}
    \caption{{\color{white}.}}
    \label{bullofig}
\end{figure}

A description of the kernel of this surjective $R$-homomorphism is the following.

\blem \label{r.slide}

The kernel of the surjective map $\cS(M) \onto \cS(M')$ is spanned by
$$ \{ w(\al):=\al_{(1)} - \al_{(2)} \ | \ \al \in \cS(M, \{u,v\})\}.$$
\elem


\section{Proof of main theorem}\label{sec.maintheorem}

\subsection{Setting and formulation}\label{ss.strategy} Let $H_d$ be a handlebody of genus $d$. It is easy to see that $\HH$ is the result of attaching a 2-handle to a genus two handlebody $H_2$ along the curve $\gamma$, as in Figure \ref{h1h1}.

\begin{figure}[ht]
    \centering
  $\vcenter{\hbox{\begin{overpic}[unit=1mm, scale = .6]{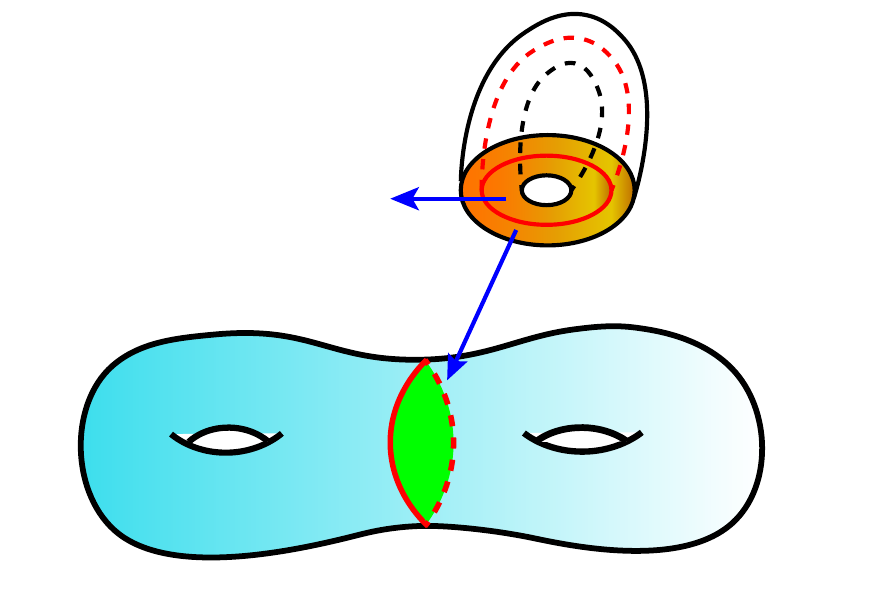}
 \put(37.5,19){$\gamma$} 
 \put(41.5,14){$D$} 
  \put(50,28){\footnotesize{\text{glue}}}
  \put(67,43){\footnotesize{$2$\text{-handle}}}
  \put(-7,13){$M'=$} 
  \put(17,28){$H_2 \cong D_2 \times I$}
  \put(35,39){$D_{\gamma}$}
  
\end{overpic} }}  $
    \caption{$H_1 \ \# \ H_1 $ is obtained by attaching a $2$-handle to $\partial H_2$ along the curve $\gamma$.}
    \label{h1h1}
\end{figure} 

\def\tD{\tilde D_2}

Let $D_2$ be the standard disk with two small open disks removed from its interior. Let $\tD$ be the thickened surface $\tD= D_2 \times [-1,1]$. Then $\tD \cong H_2$. Choose such a homeomorphism and identify $H_2$ with $\tD$. Then $\cS(H_2)= \cS(D_2)$ has an algebra structure. Let $x_1, x_2$, and $y$ be the curves on $D_2$ illustrated in Figure \ref{pantscurves}. By \cite{smquant}
\be 
\cS(D_2) = R[x_1, x_2, y]= \hR[y], \quad \text{where} \ \hR:=R[x_1, x_2].
\ee

\begin{figure}[ht]
\centering
\begin{overpic}[unit=1mm, scale = 0.12]{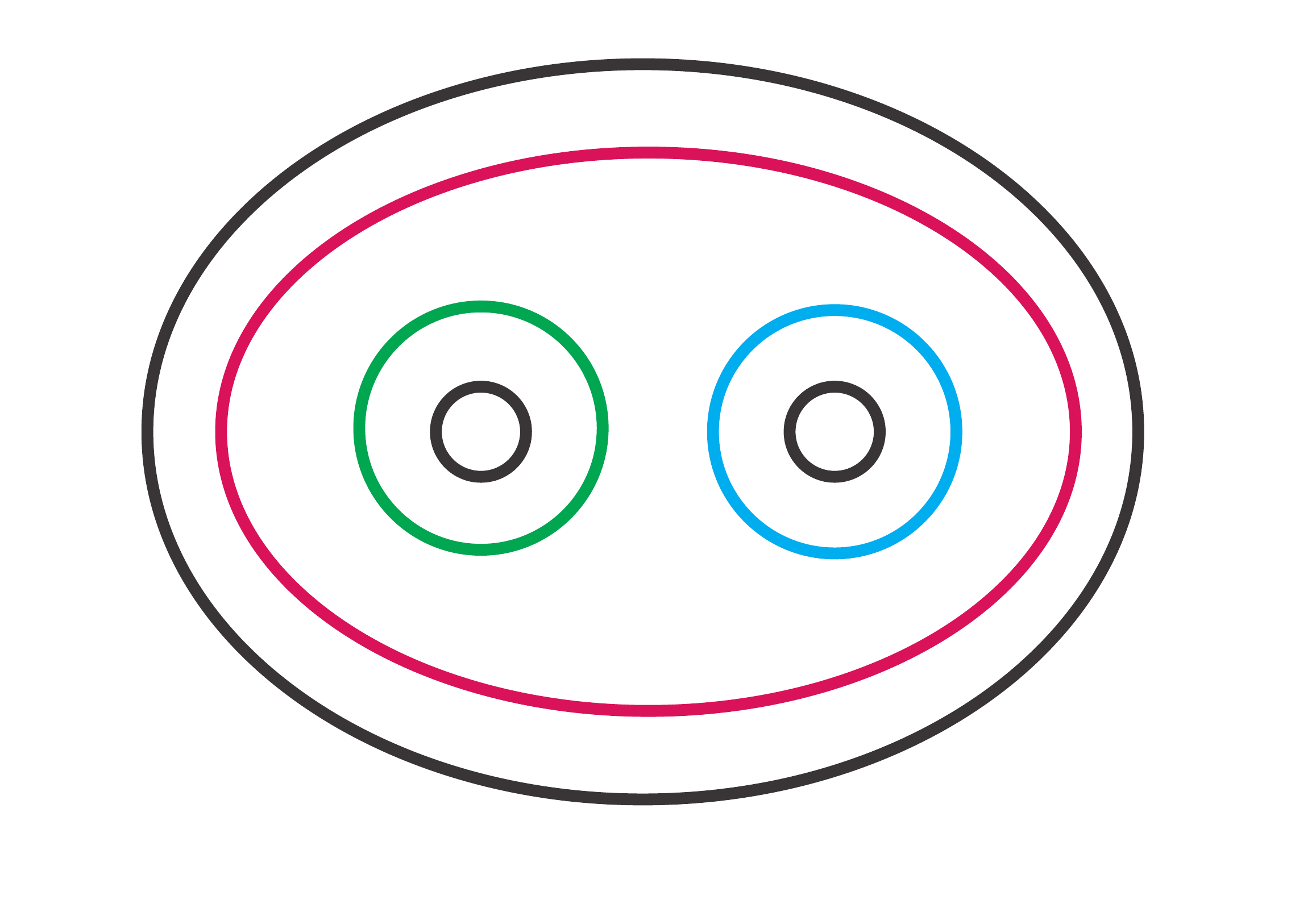}

\put(22,8){\footnotesize$y$}
\put(9.5,16){\footnotesize$x_1$}
\put(34.5,16){\footnotesize$x_2$}
\end{overpic}
\caption{Boundary parallel curves $x_1$, $x_2$, and $y$ in $F_{0,3}$.}
\label{pantscurves}
\end{figure}

The embedding $H_2 \embed \HH$ induces a surjective $R$-linear epimorphism
$$ \Phi: \cS(H_2)= \hR[y] \onto \SH.$$

We reformulate Theorem \ref{thm1} as follows.
\bthm \label{thm1a}
The kernel of $\Phi$ is the $\hR$-module $\fG$ spanned by $\{ G_n, n \ge 1\}$, where
\be 
G_n= \{ n+1\} S_n(y) + (-1)^{n+1} \{1\} S_n(x_1) S_n(x_2).
\ee
Consequently, $\Phi$ induces an isomorphism
$$ \phi: R[x_1, x_2, y]/\fG \cong \SH.  $$
\ethm

\brem The kernel of $\Phi$ was suggested by the work \cite{CL}, where it is essentially proved that $\{n\} G_n$ is in the kernel. 
\erem

\subsection{Strategy of Proof} 

Since the curves $x_1$ and $x_2$ are far from the attaching curve $\gamma$, if $z\in \ker \Phi$ then $x_1 z, x_2 z \in \ker\Phi$. In other words, $\ker \Phi$ is an $\hR$-module. Hence, $\SH$ is also an $\hR$-module and $\Phi$ is $\hR$-linear.


\def\SDr{\cS(\tD, \{u, v\})}

Consider the relative skein module $\SDr$, where  
 the marked points $u$ and $v$ lie on
$\gamma \cap (D_2 \times \{1\})$ and have markings pointing to the right as illustrated in Figure \ref{rkbsmbasis3}. There is a right action of
$\cS(D_2)$ on $\SDr$, 
$$\SDr \ot \cS(D_2) \to \SDr, \ \al \ot \beta \to \al * \beta,    $$
where  $\al * \beta$ is the result of stacking $\al$ above $\beta$. The following is  essentially \cite[Lemma 5.2]{2bk}, which is 
 a reformulation of a result in \cite{knotext}.
 

 \begin{figure}[h]
    \centering
$\vcenter{\hbox{\begin{overpic}[scale=.075]{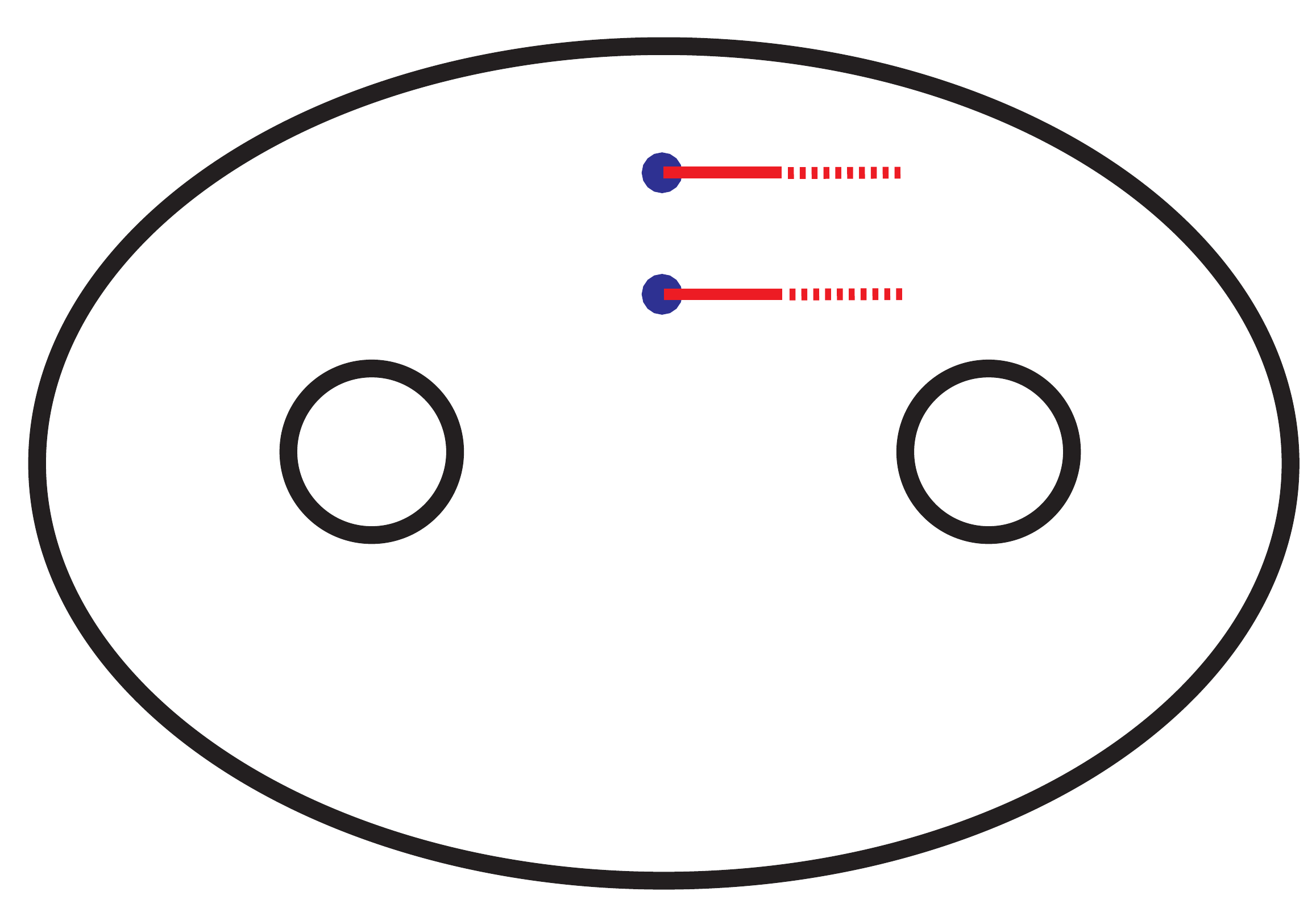}
\put(35, 45){\tiny{$u$}}
\put(35, 37){\tiny{$v$}}
\end{overpic}}}
\vcenter{\hbox{\begin{overpic}[scale=.075]{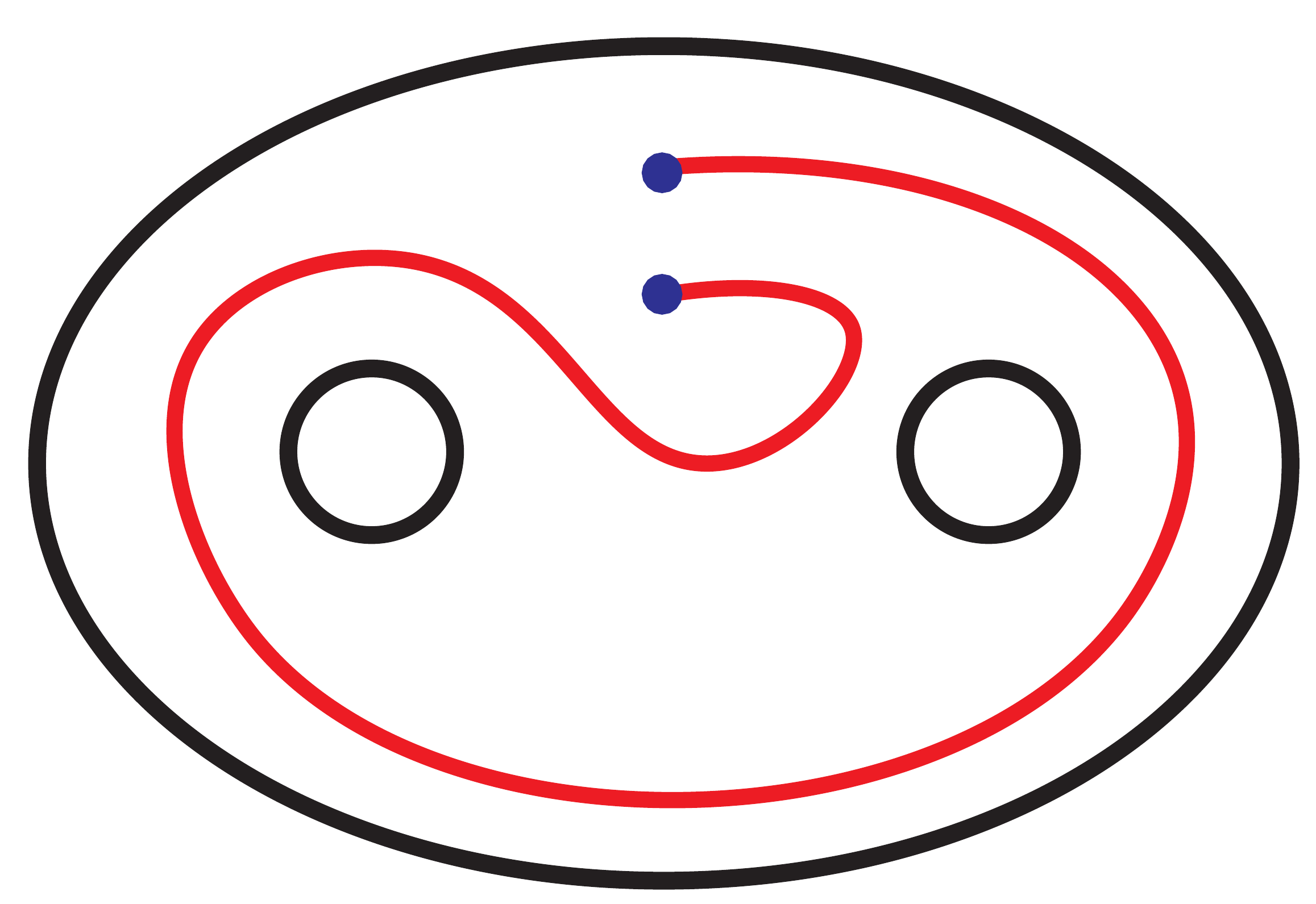}
\put(35, -7){$a_{1}$}
\end{overpic}}} \hspace{2mm}
\vcenter{\hbox{\begin{overpic}[scale=.075]{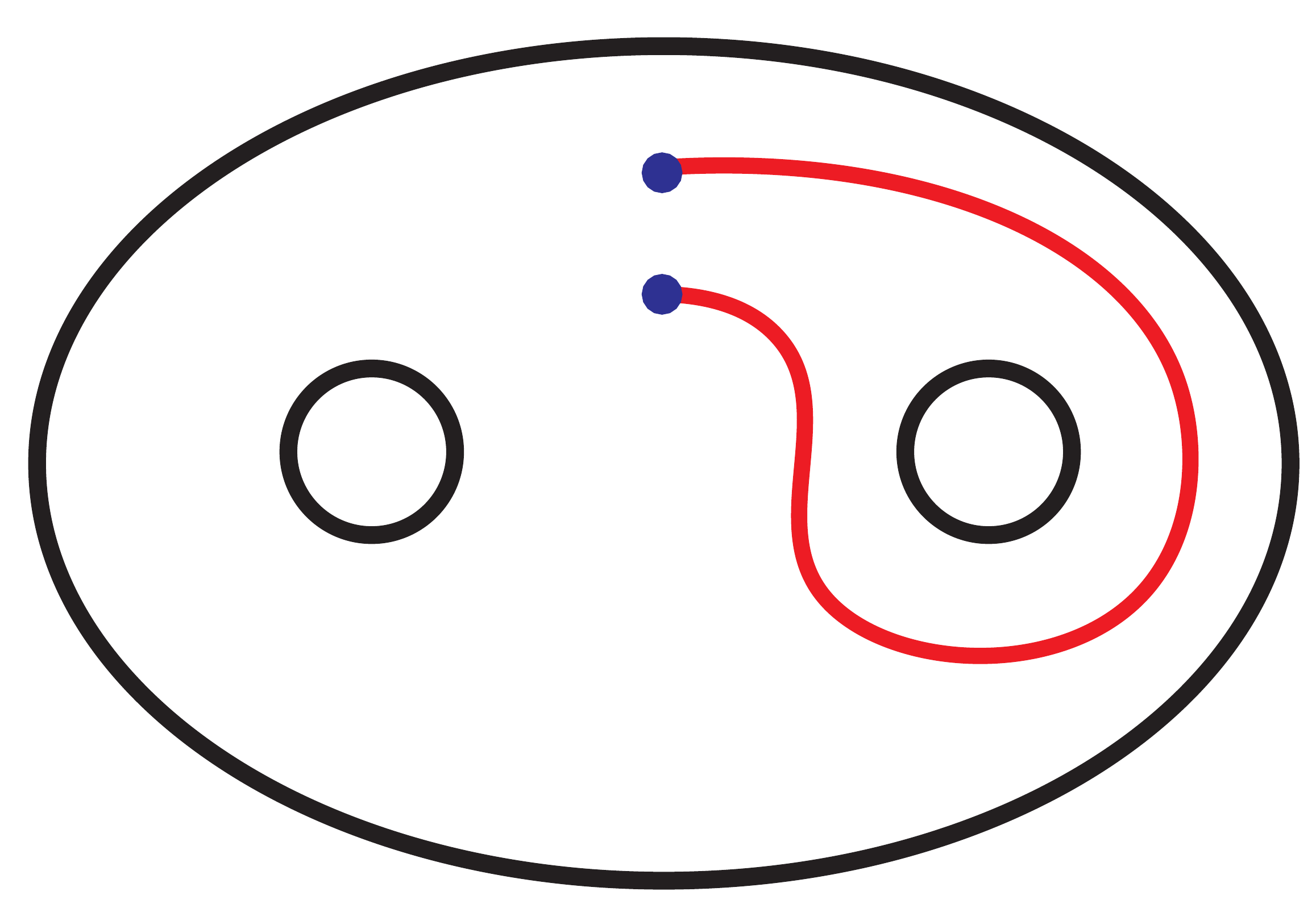}
\put(38, -7){$a_{2}$}
\end{overpic}}} \hspace{2mm}
\vcenter{\hbox{\begin{overpic}[scale=.075]{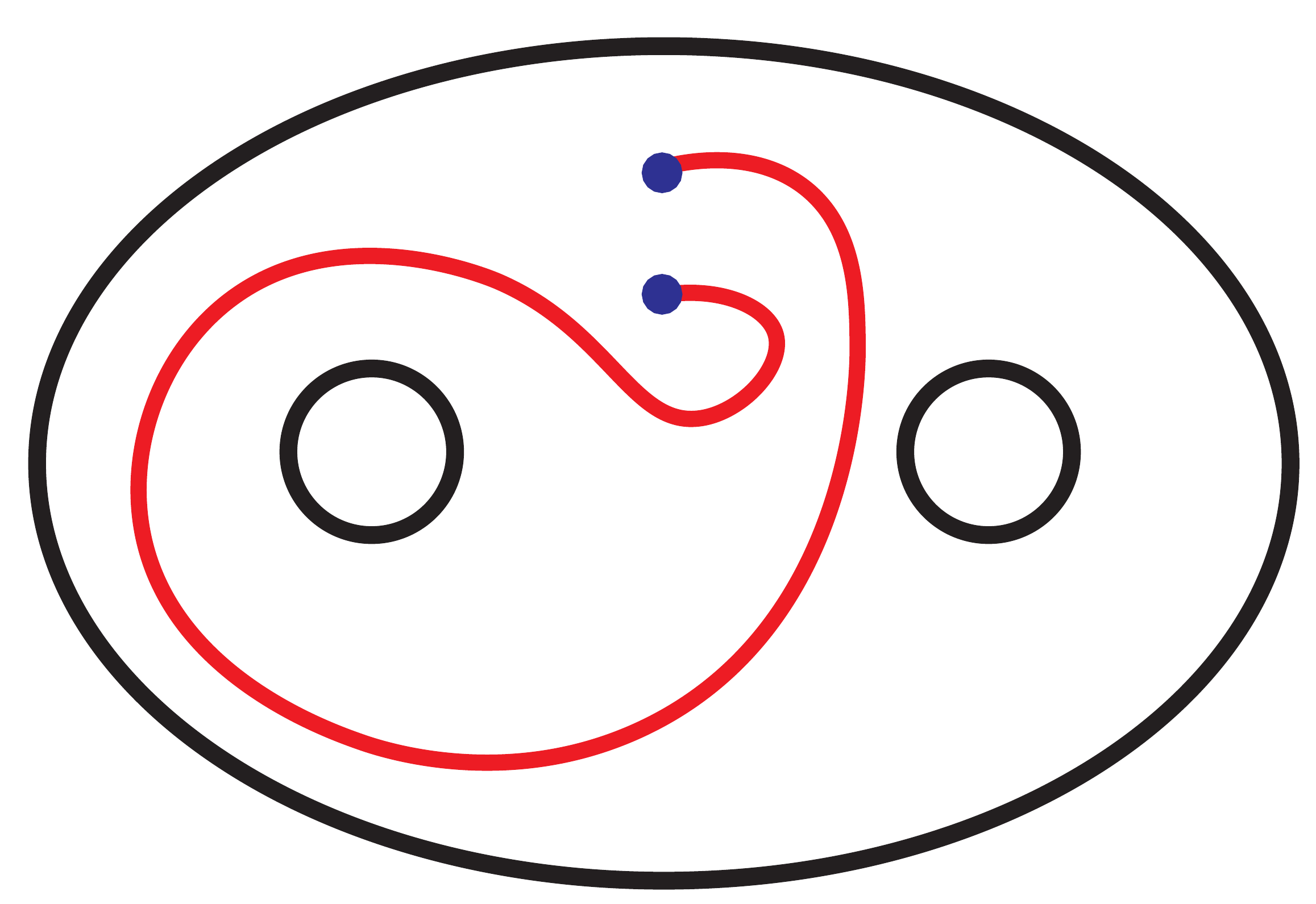}
\put(38, -7){$a_{3}$}
\end{overpic}}}  \hspace{2mm}
\vcenter{\hbox{\begin{overpic}[scale=.075]{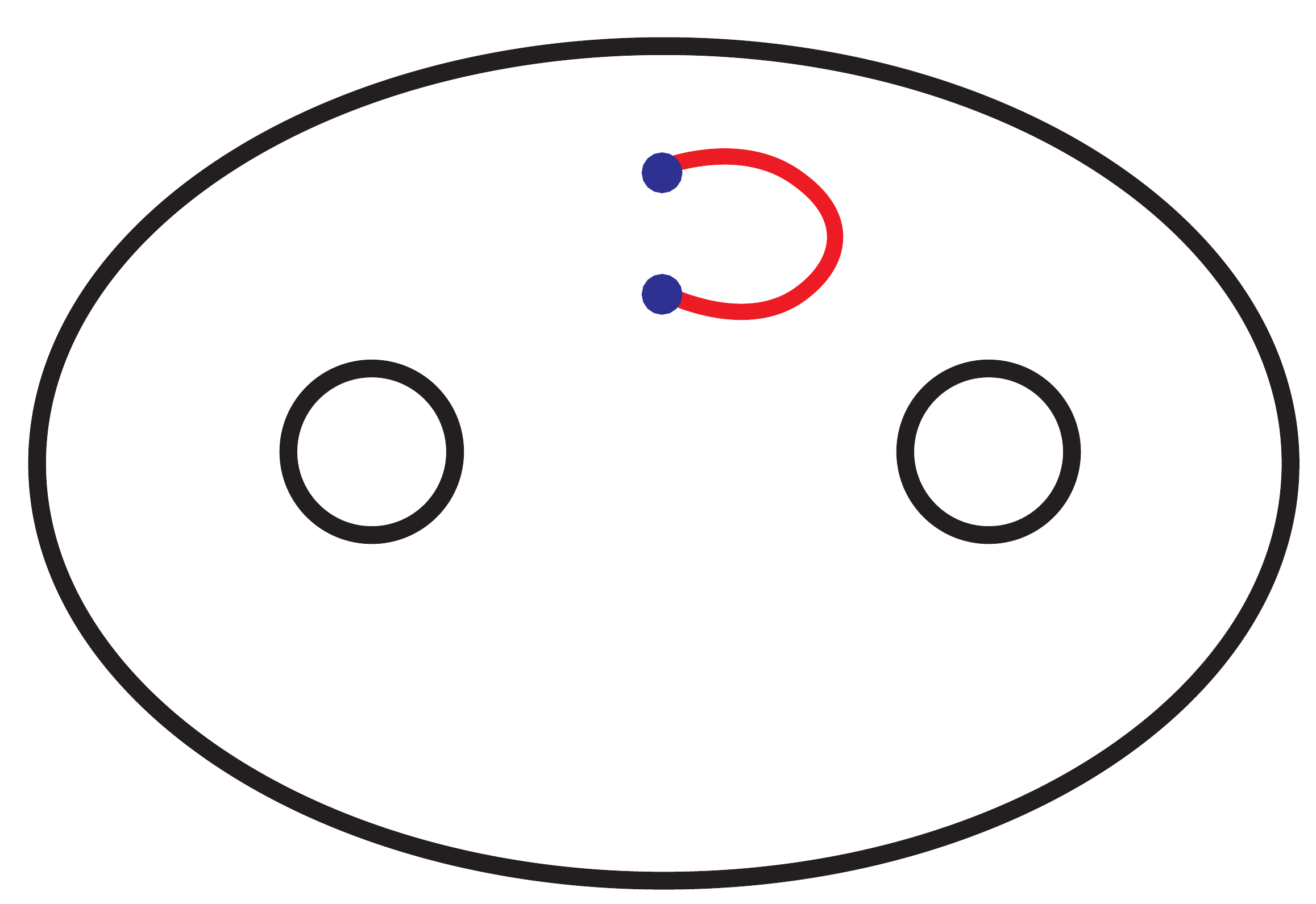}
\put(39, -7){$a_{4}$}
\end{overpic}}}$
\vspace*{2mm}
    \caption{The marked points $u$ and $v$ and the arcs $a_1, a_2, a_3$, and $a_4$.}
    \label{rkbsmbasis3}
\end{figure}

\blem As a right module over $\cS(D_2)$,  $\SDr$ is spanned by $a_1, a_2, a_3$, and $a_4$, which are illustrated in Figure \ref{rkbsmbasis3}.
\elem

Consequently, from Lemma \ref{r.slide} we have
\be 
\ker \Phi = \cK_1 + \cK_2 + \cK_3 + \cK_4,
\label{eq.K}
\ee
 where
\be \cK_i= w(a_i * \hat R [y]) =  \sum_{k=0}^\infty w (a_i* y^k) * \hR.
\ee

Let us have a closer look at $\cK_1$, which is $\hR$-spanned by $w(a_1 * y^k), k\ge 1$. We have
\begin{align}
(a_1 * y^{k-1})_{(1)} 
 & = \vcenter{\hbox{\begin{overpic}[scale=.075]{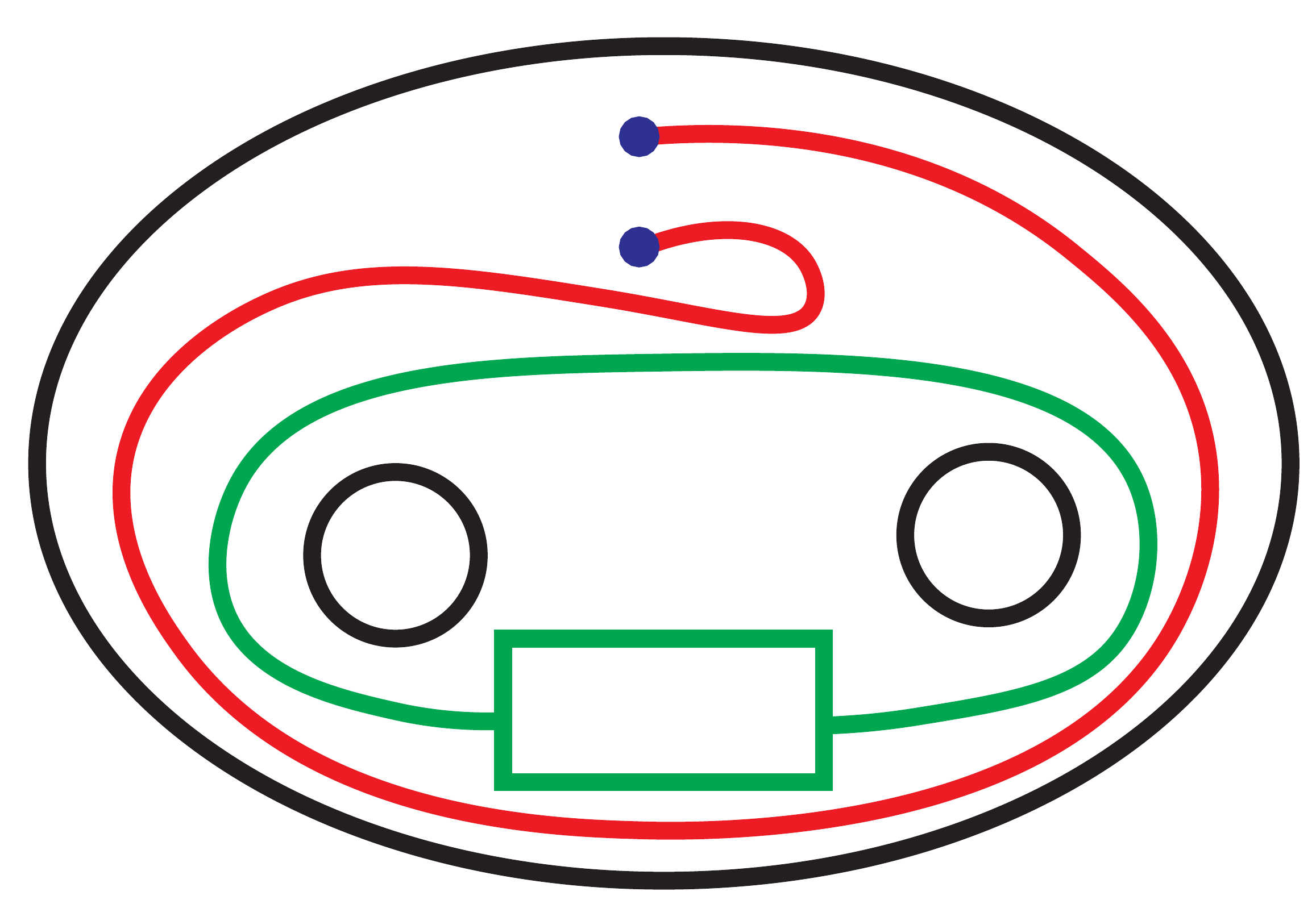}
 \put(33,9.5){\tiny{$k-1$}}
\end{overpic}}} = y^k \\
(a_1 * y^{k-1})_{(2)} &= \vcenter{\hbox{\begin{overpic}[scale=.075]{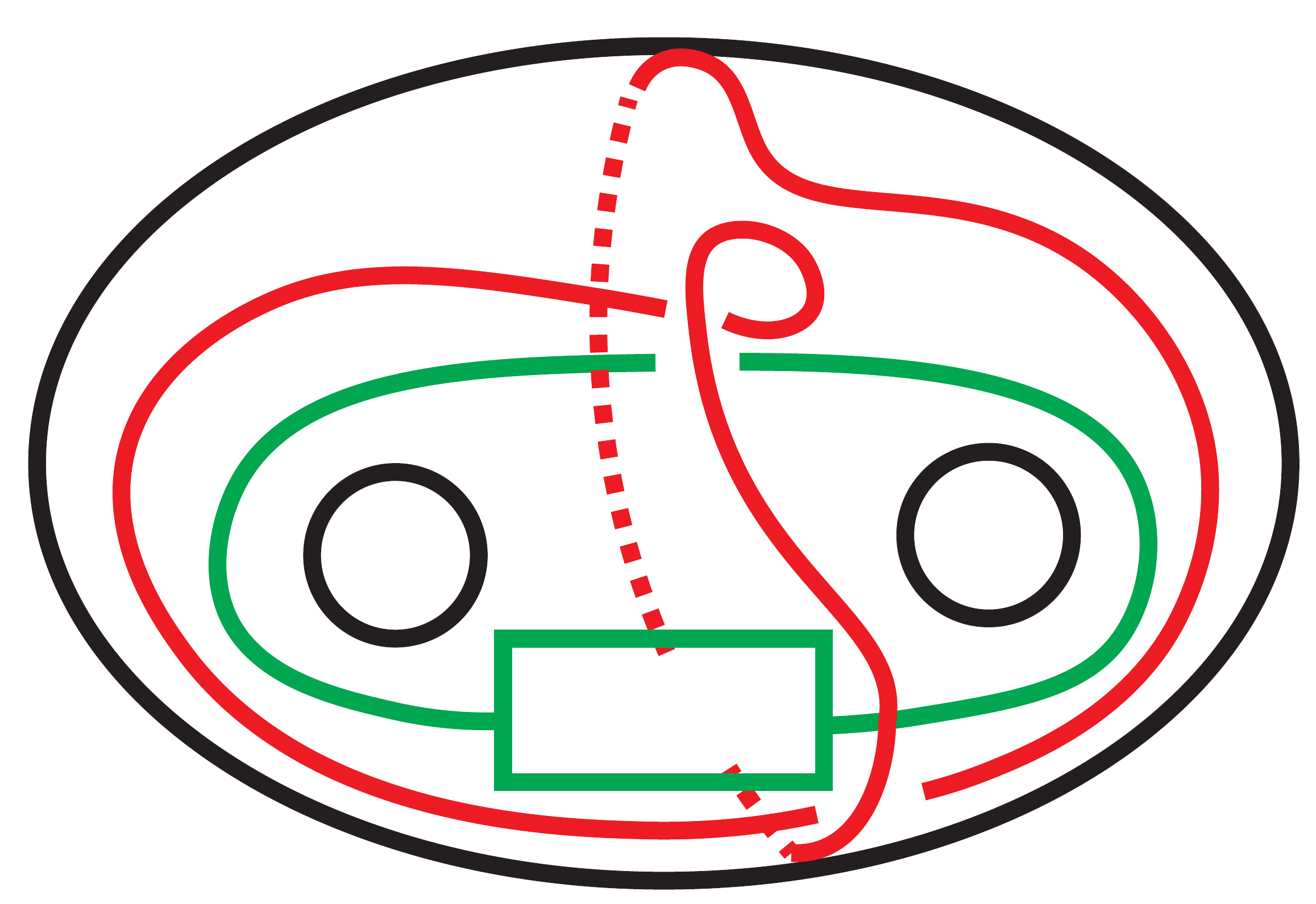}
\put(33,9.5){\tiny{$k-1$}}
\end{overpic}}} = q^6 z_k,
\end{align}
where $z_k \ \text{(due to Equation \ref{eq.kink})} = \vcenter{\hbox{\begin{overpic}[scale=.075]{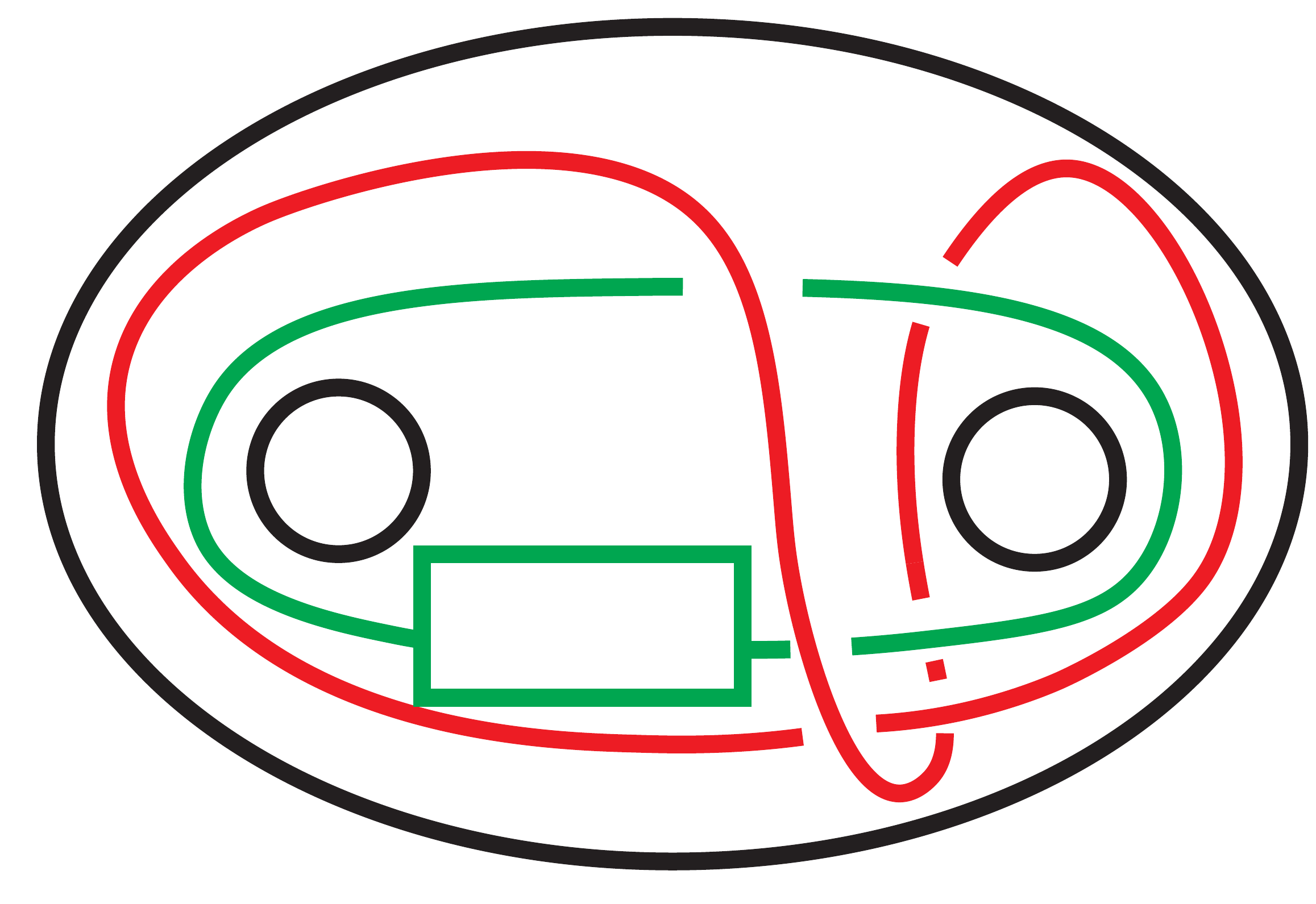}
 \put(27.5,15){\tiny{$k-1$}}
\end{overpic}}} .$ 
Thus, we have the following explicit description of $\cK_1$.
\blem The $\hR$-module
$\cK_1$ is  spanned by $\{ y^k - q^6 z_k, k \ge 1\}$.
\elem

The conjecture of the first and third authors in \cite{counterhandle} mentioned in Section \ref{introsectionh1h1} is the following statement.
\bthm We have $\ker \Phi=\cK_1$.
\ethm
 We will see that this  is a consequence of the proof of Theorem  \ref{thm1a}. We now prove the following lemmas, which prove Theorem \ref{thm1a}.

\blem \label{r1} One has $\cK_1= \fG$.
\elem

\blem \label{r234}
 For each $i=2,3,4$ we have $\cK_i \subset \cK_1$.
\elem

Lemma \ref{r234} and Identity \eqref{eq.K} show that $\ker \Phi= \cK_1$, which proves the conjecture by the first and third authors. Lemma \ref{r1} further shows that $\ker \Phi= \fG$, proving Theorem \ref{thm1a}. 

\subsection{Proof of Lemma \ref{r234} assuming Lemma \ref{r1}}
We need to prove that $w(a_i z)\in \cK_1$ for $i=2,3,4$ and $z=y^k, k =0,1,2,\dots$. 
\bpr[Proof that $\cK_2 \subset \cK_1$] By definition,  $w (a_2 z) = (a_2 z)_{(1)} - (a_2 z)_{(2)}$. The diagrams for these curves are illustrated in Figure \ref{fig: a2z}.

\begin{figure}[h]
    \centering
$\vcenter{\hbox{\begin{overpic}[scale=.075]{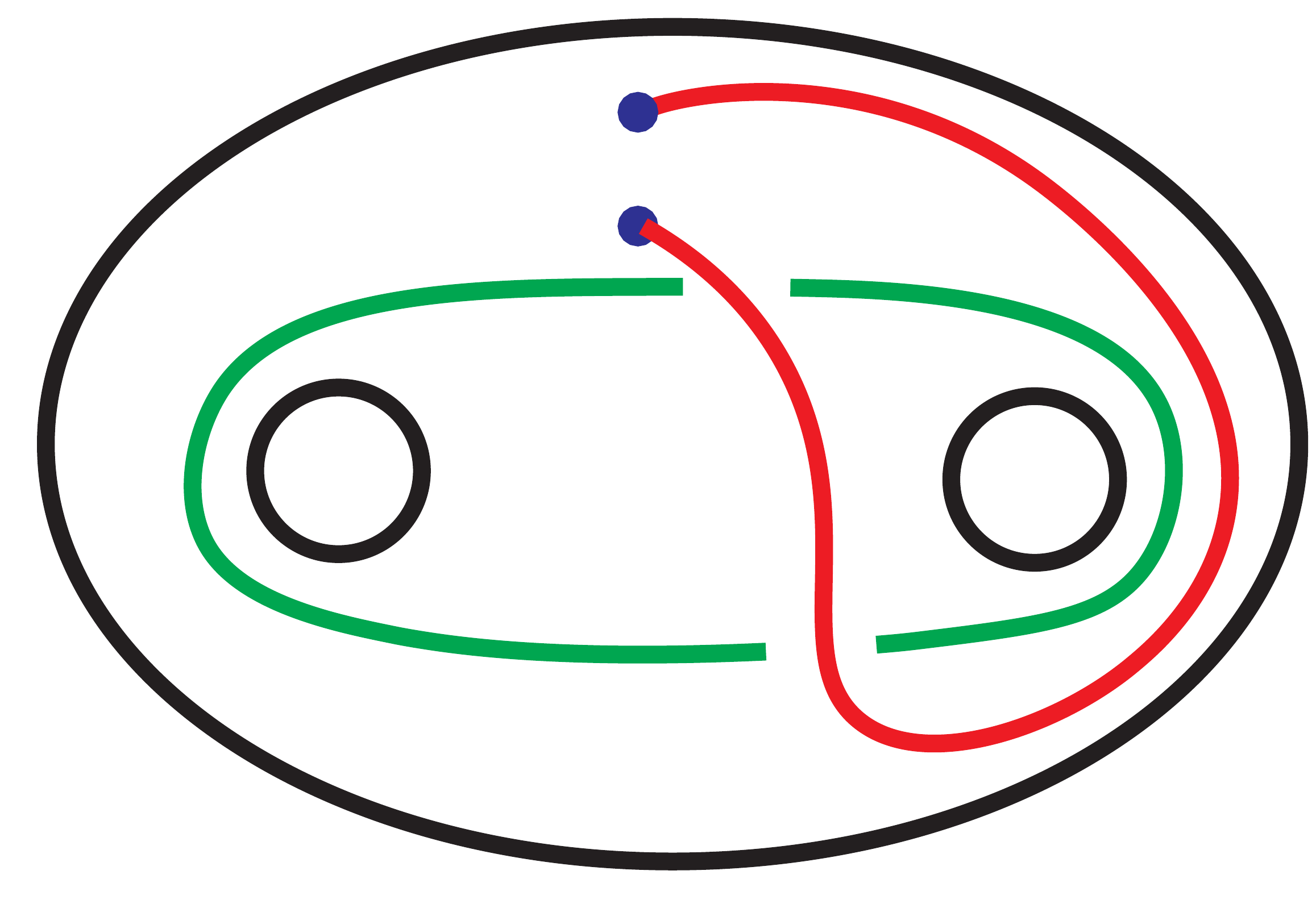}
\put(35, -8){$a_{2}z$}
\end{overpic}}} \hspace{4mm}
\vcenter{\hbox{\begin{overpic}[scale=.075]{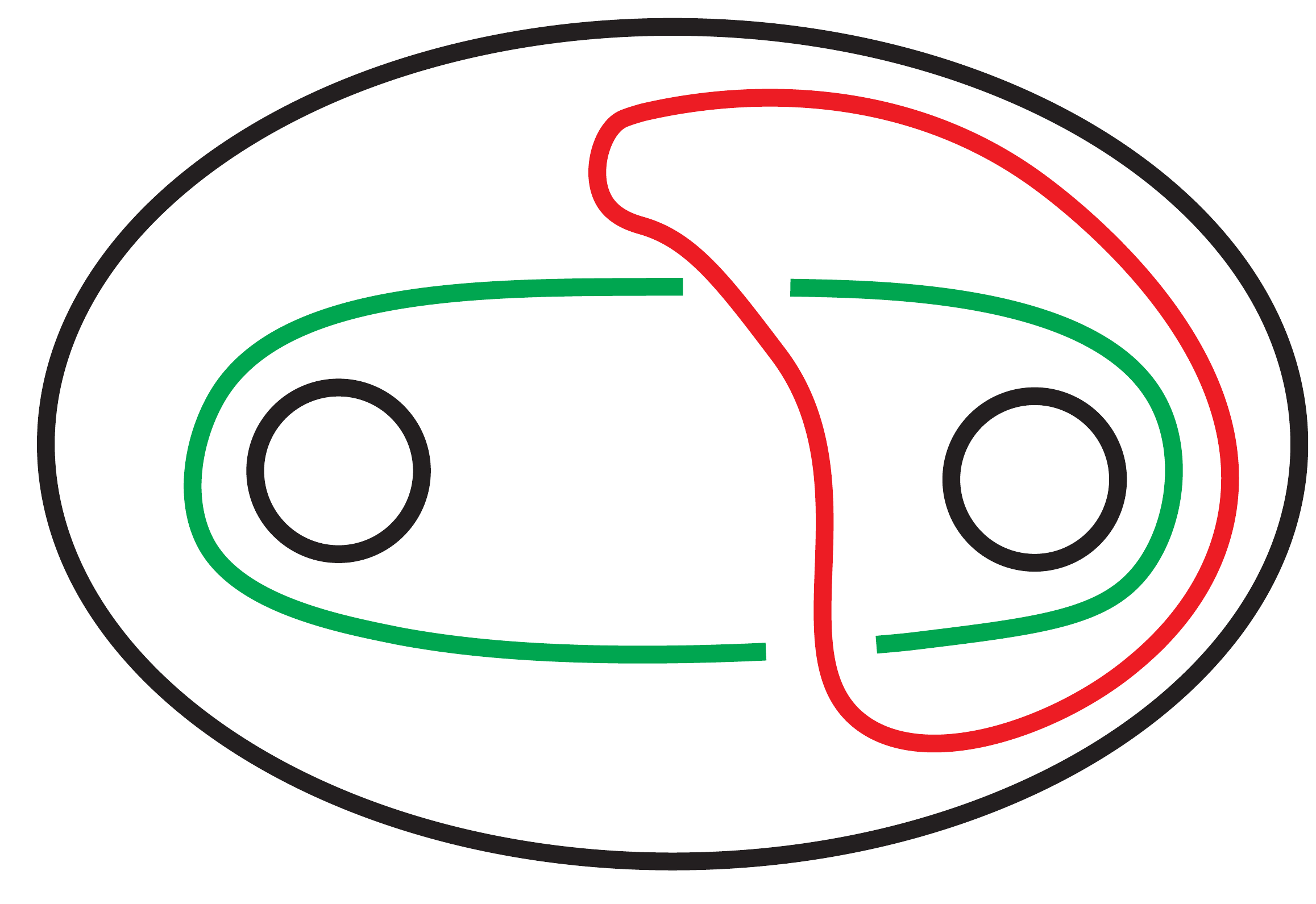}
\put(30, -9){$(a_{2}z)_{(1)}$}
\end{overpic}}} \hspace{4mm}
\vcenter{\hbox{\begin{overpic}[scale=.075]{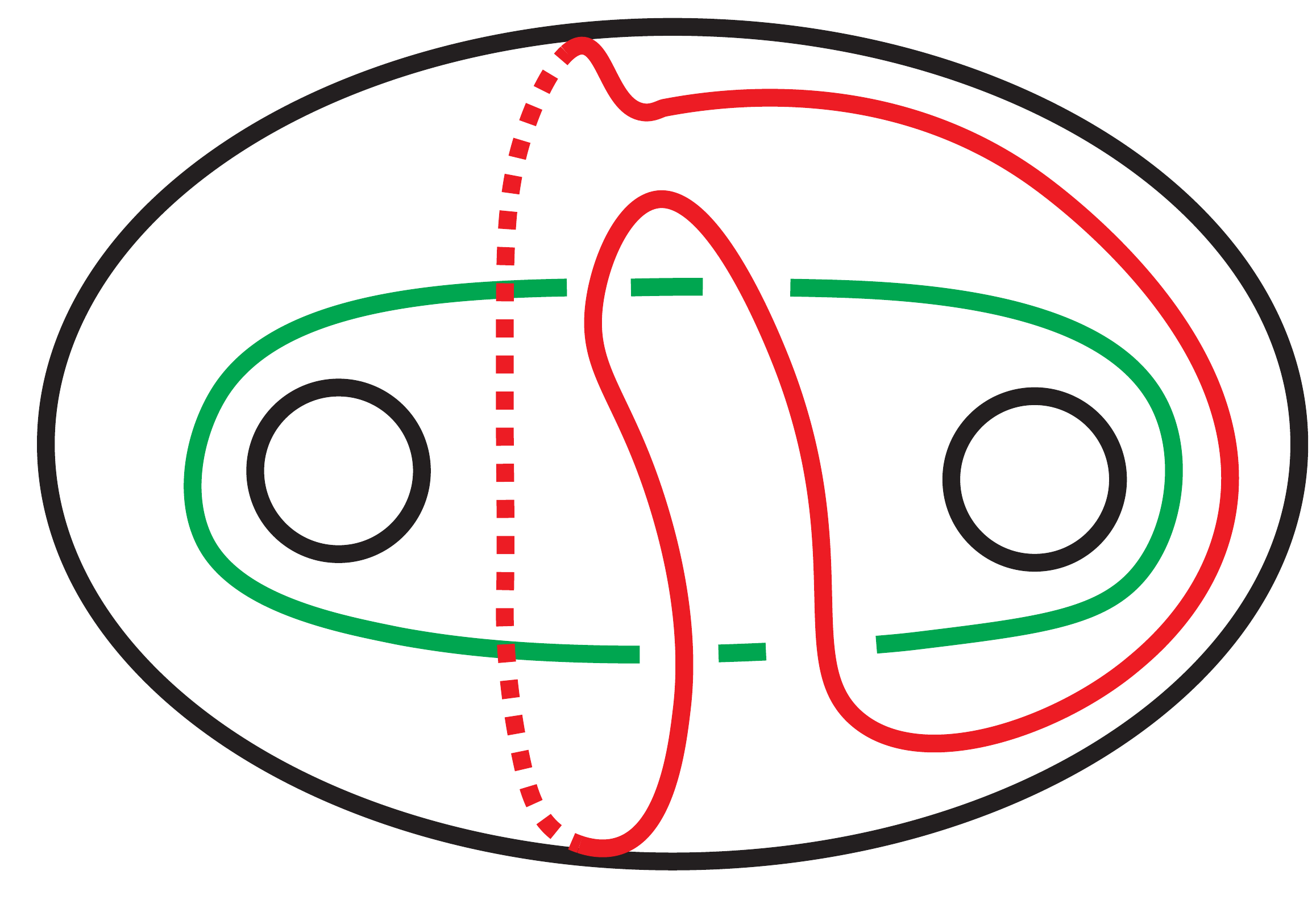}
\put(30, -9){$(a_{2}z)_{(2)}$}
\end{overpic}}}$
\vspace*{2mm}
    \caption{The curves $a_2z, (a_2z)_{(1)}$, and $(a_{2}z)_{(2)}$.}
    \label{fig: a2z}
\end{figure}

We see that  $(a_2 z)_{(1)}=x_2 z= (a_2 z)_{(2)}$. Hence, $w(a_2 z)=0 \in \cK_1$.
\epr
Before proceeding further, we discuss a consequence of Lemma \ref{r1}. Note that  $$\tau(G_k)=-G_k\in \fG.$$ 
As $\fG$ is generated by $G_k$ we conclude that $\tau(\fG)\subset \fG$.  From $\tau^2=\id$ we get $\tau(\fG)= \fG)$.
Since $\cK_1 = \fG$ by Lemma \ref{r1}, we have $\tau(\cK_1) = \cK_1$.
In particular, since $y^k - q^6 z_k \in \cK_1$, we have
\be y^k - q^{-6} \tau (z_k) \in \cK_1.
\label{eq.reflection}
\ee

\bpr[Proof that $\cK_4 \subset \cK_1$] The diagrams for the curves $a_4z, (a_4z)_{(1)}$, and $(a_{4}z)_{(2)}$ are illustrated in Figure \ref{fig: a2z}.

\begin{figure}[h]
    \centering
$\vcenter{\hbox{\begin{overpic}[scale=.075]{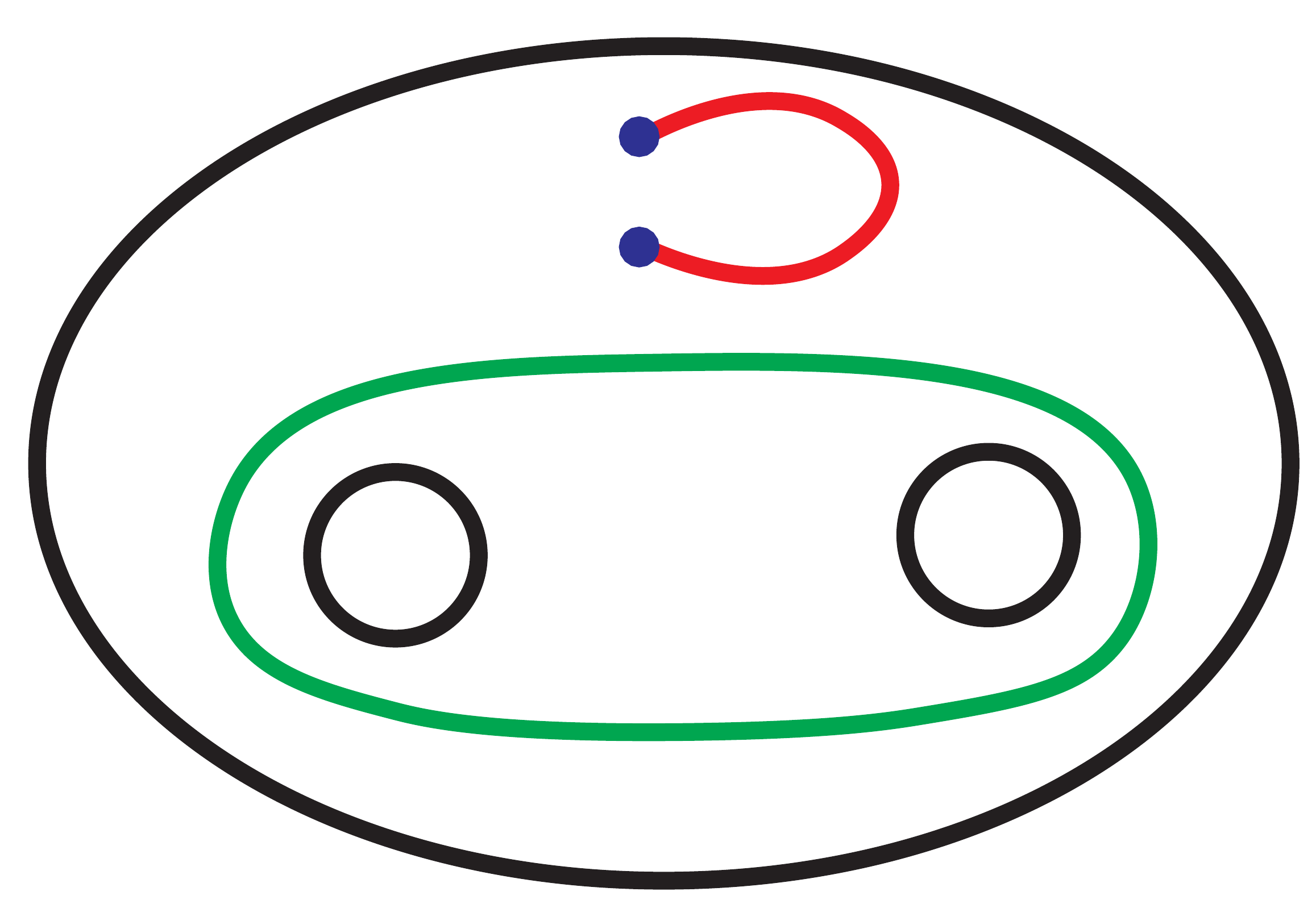}
\put(35, -8){$a_{4}z$}
\end{overpic}}} \hspace{4mm}
\vcenter{\hbox{\begin{overpic}[scale=.075]{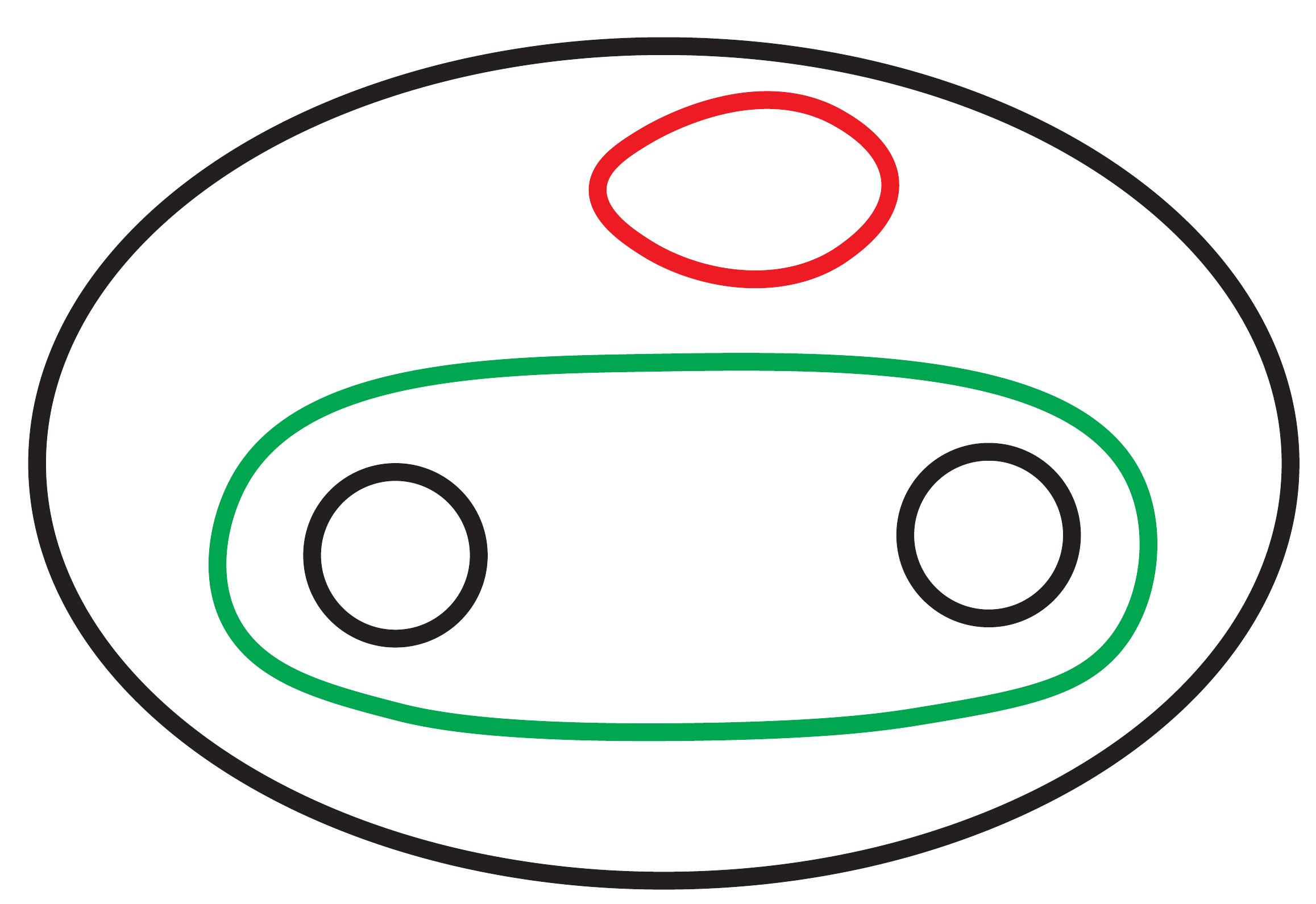}
\put(30, -9){$(a_{4}z)_{(1)}$}
\end{overpic}}} \hspace{4mm}
\vcenter{\hbox{\begin{overpic}[scale=.075]{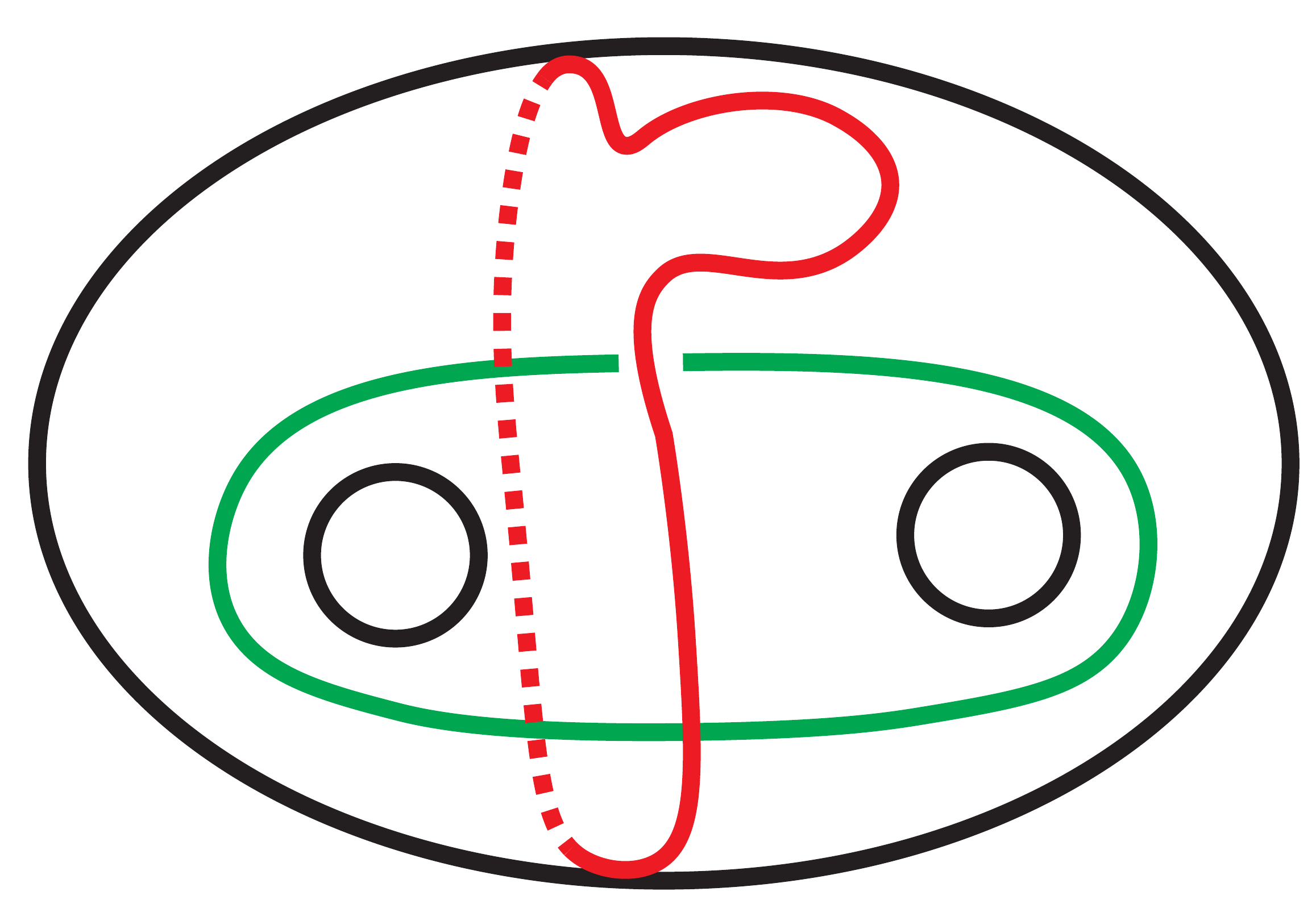}
\put(30, -9){$(a_{4}z)_{(2)}$}
\end{overpic}}}$
\vspace*{2mm}
    \caption{The curves $a_4z, (a_4z)_{(1)}$, and $(a_{4}z)_{(2)}$.}
    \label{fig: a4z}
\end{figure}

With $z=y^k$, we see
\begin{align}
(a_4 z)_{(1)} = (-q^2 -q^{-2}) y^k. 
\end{align}

  Using the skein relation at the top right  crossing, we get
 \be 
 (a_4 z)_{(2)} = q^{-1}\vcenter{\hbox{\begin{overpic}[scale=.075]{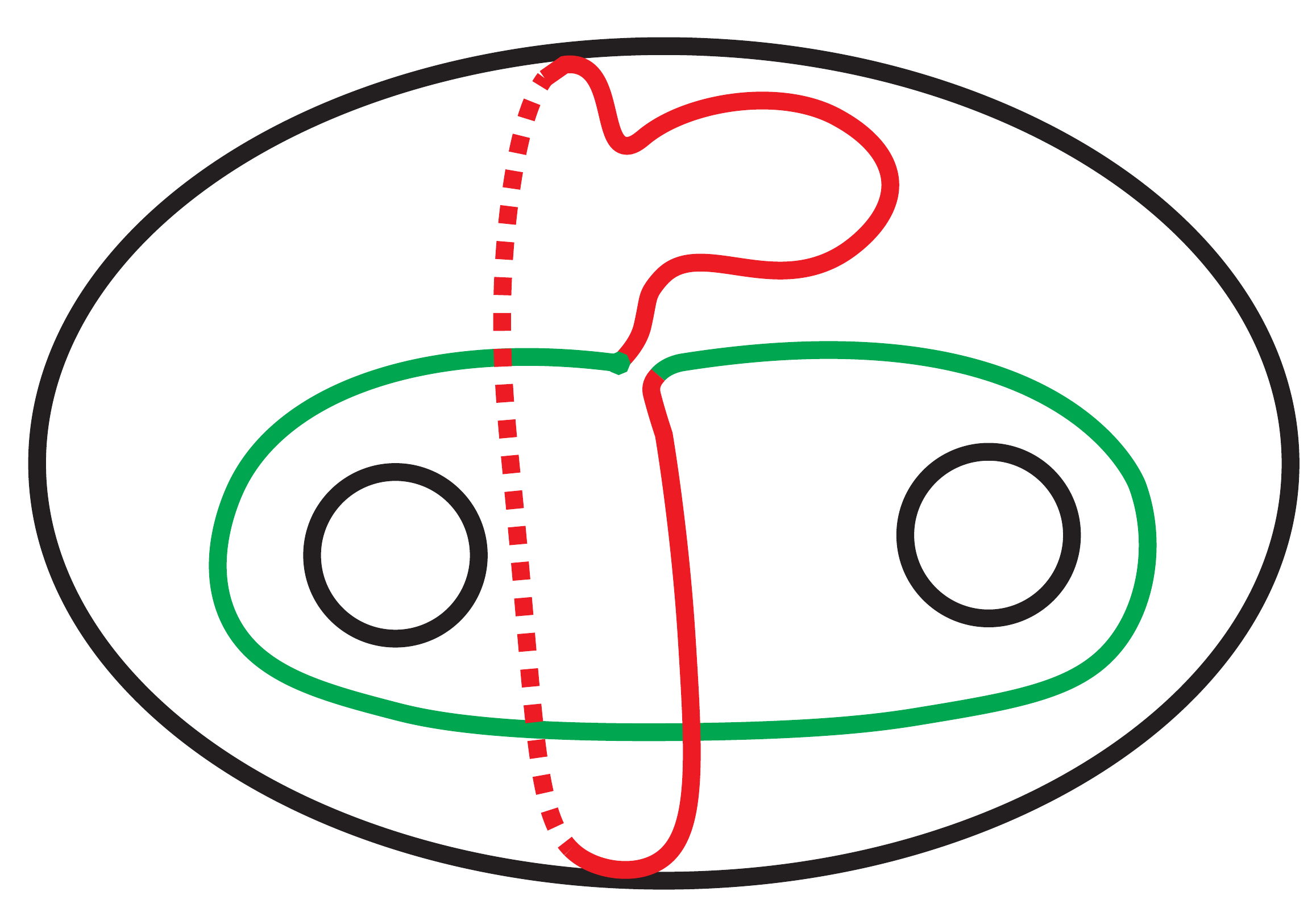}
\end{overpic}}} + q \vcenter{\hbox{\begin{overpic}[scale=.075]{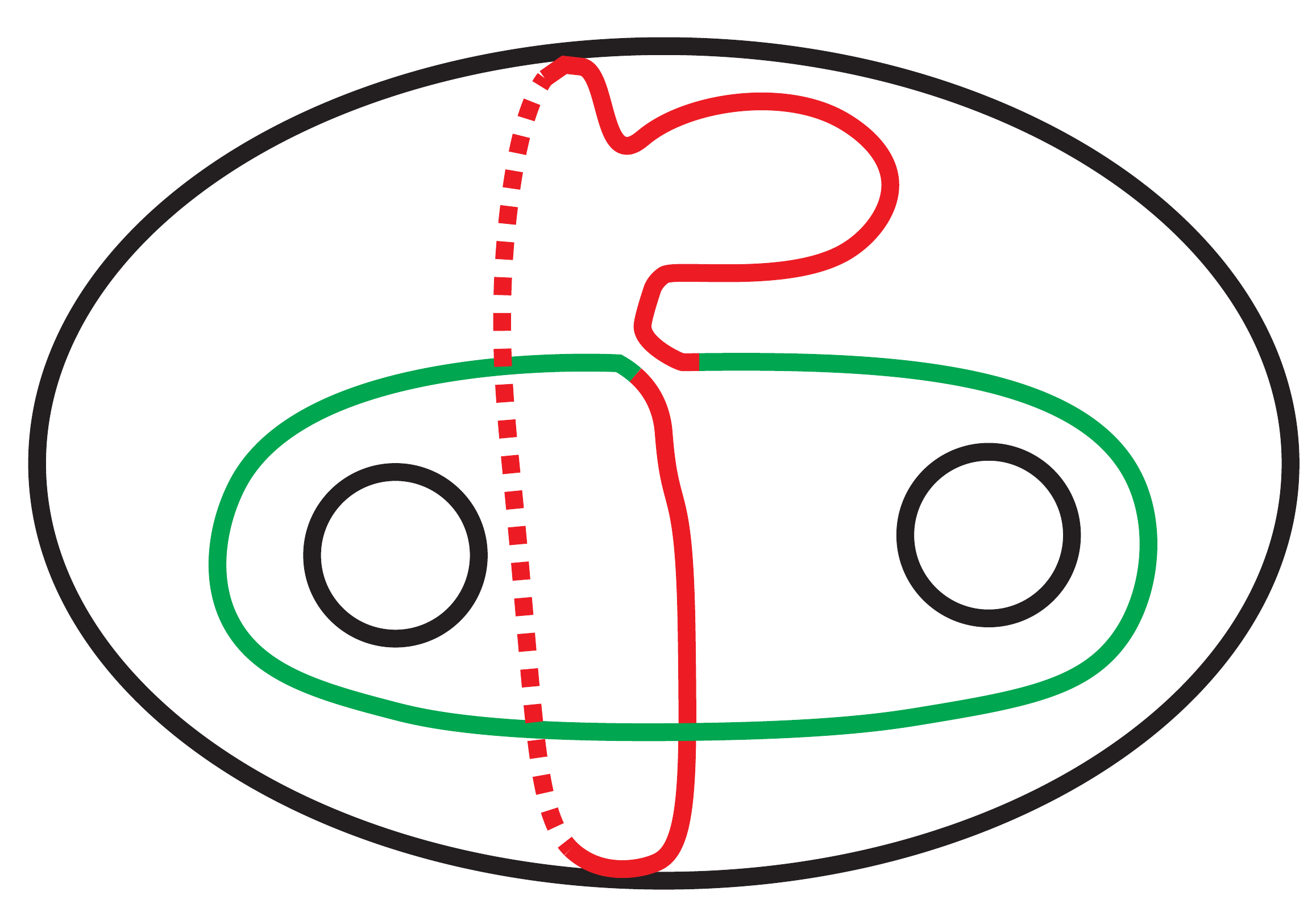}
\end{overpic}}}  = - q^{-4} \tau (z_k) -q^4 z_k .
 \ee 
From \eqref{eq.reflection} we get 
$$ (a_4 z)_{(2)}  = (-q^{-2}- q^2) y^k  \pmod {\cK_1} =   (a_4 z)_{(1)} \pmod {\cK_1} . \qedhere $$
\epr

\bpr[Proof that $\cK_3 \subset \cK_1$] See Figure \ref{fig: a3z} for illustrations of $a_3 z, (a_3 z)_{(1)}$ and $ (a_3 z)_{(2)}$.

\begin{figure}[h]
    \centering
$\vcenter{\hbox{\begin{overpic}[scale=.075]{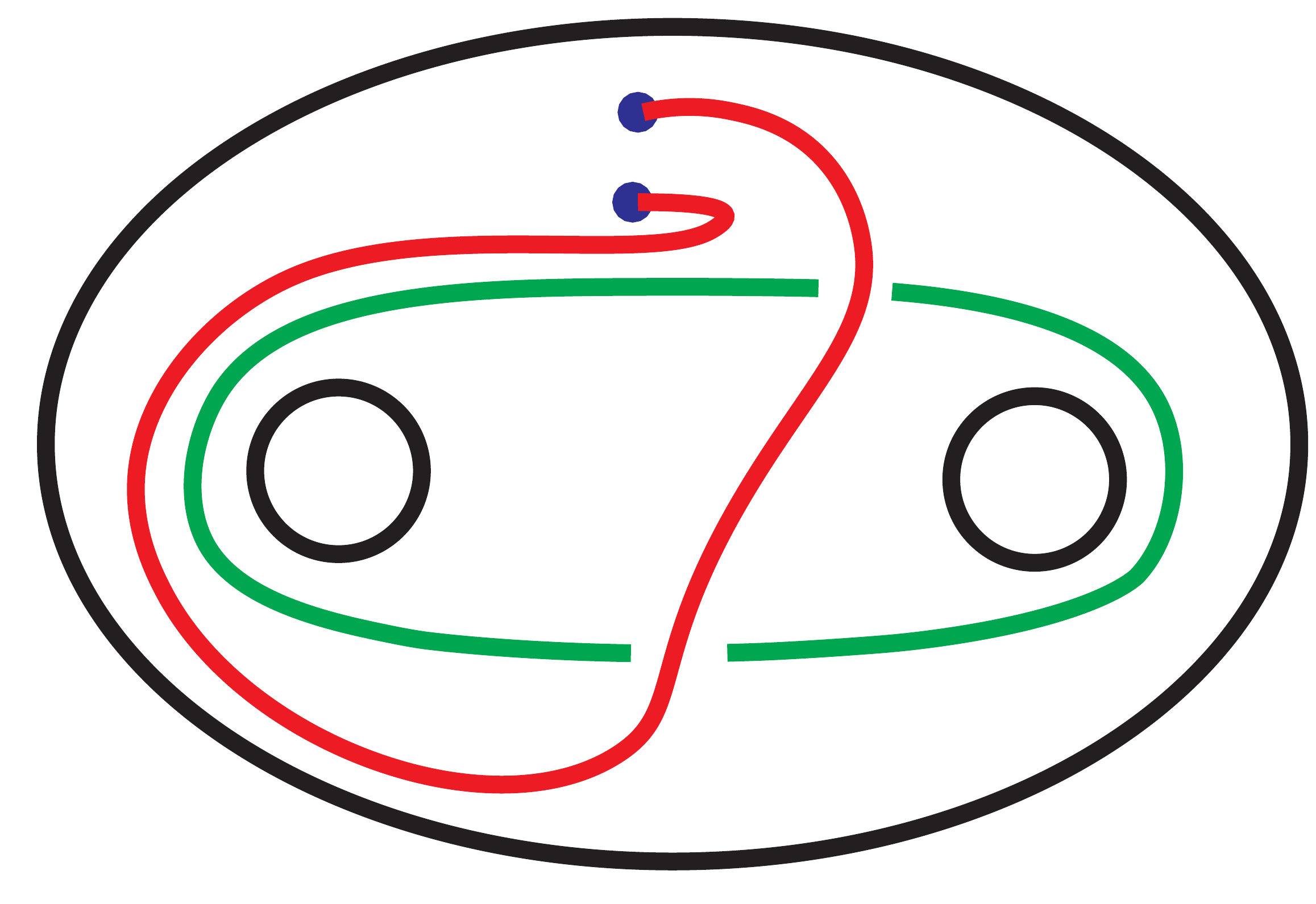}
\put(35, -8){$a_{3}z$}
\end{overpic}}} \hspace{4mm}
\vcenter{\hbox{\begin{overpic}[scale=.075]{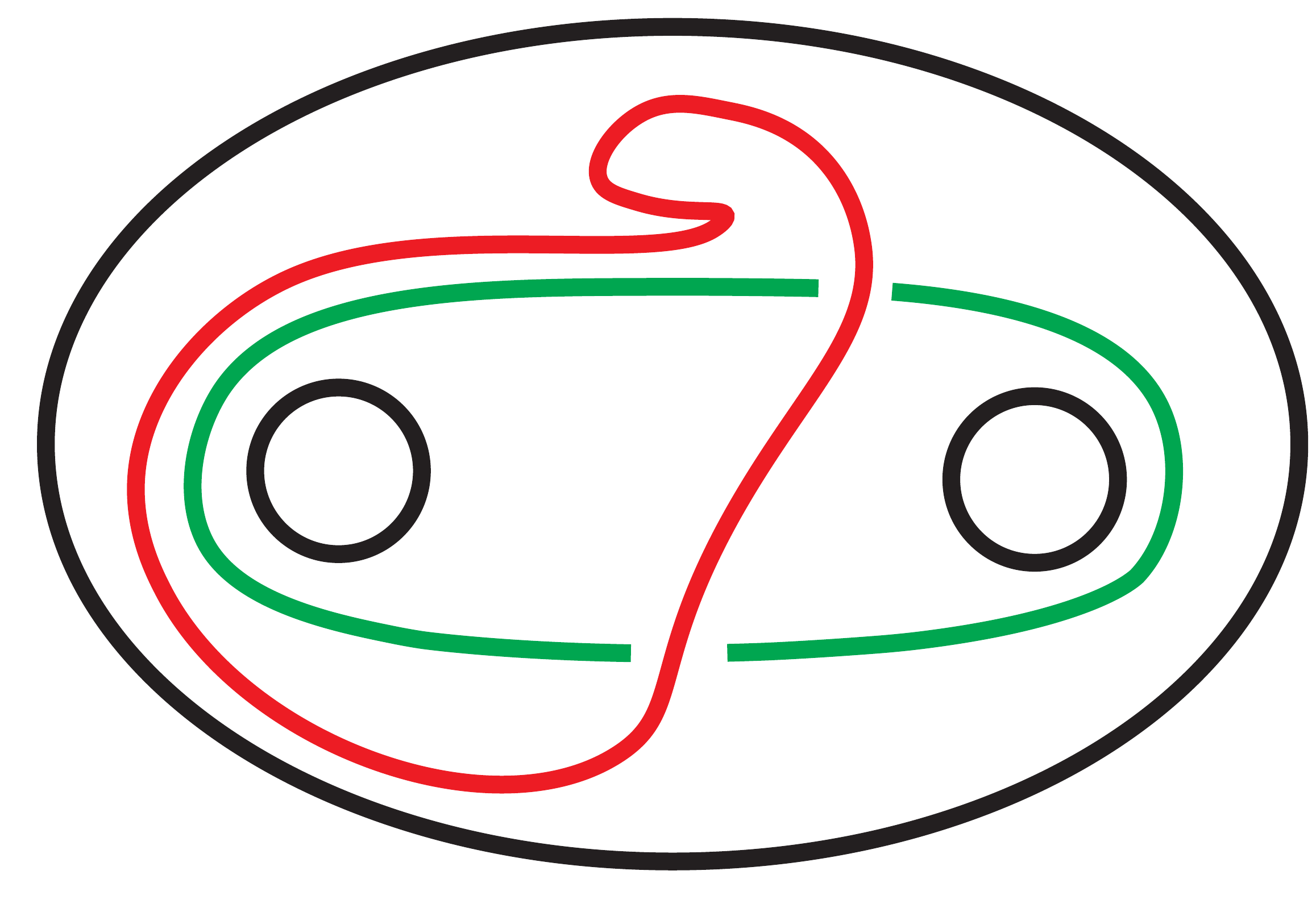}
\put(30, -9){$(a_{3}z)_{(1)}$}
\end{overpic}}} \hspace{4mm}
\vcenter{\hbox{\begin{overpic}[scale=.075]{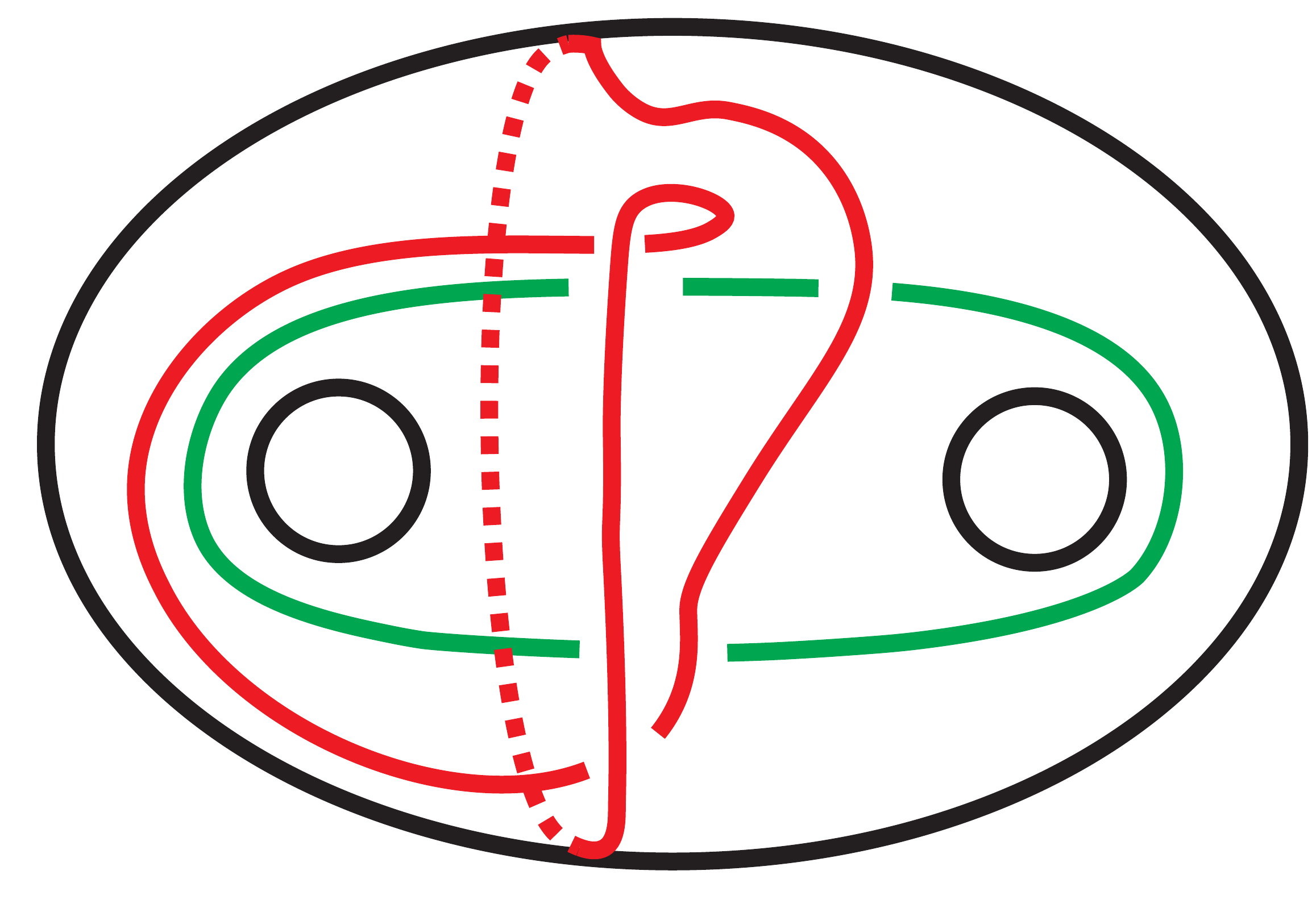}
\put(30, -9){$(a_{3}z)_{(2)}$}
\end{overpic}}}$
\vspace*{2mm}
    \caption{The curves $a_3z, (a_3z)_{(1)}$, and $(a_{3}z)_{(2)}$.}
    \label{fig: a3z}
\end{figure}

 We have $ (a_3 z)_{(1)} =  x_1 z$. Removing the kink in $(a_{3}z)_{(2)}$ and then resolving the red crossing, we get

\begin{align}
 (a_3 z)_{(2)} & = -q^{3}\vcenter{\hbox{\begin{overpic}[scale=.075]{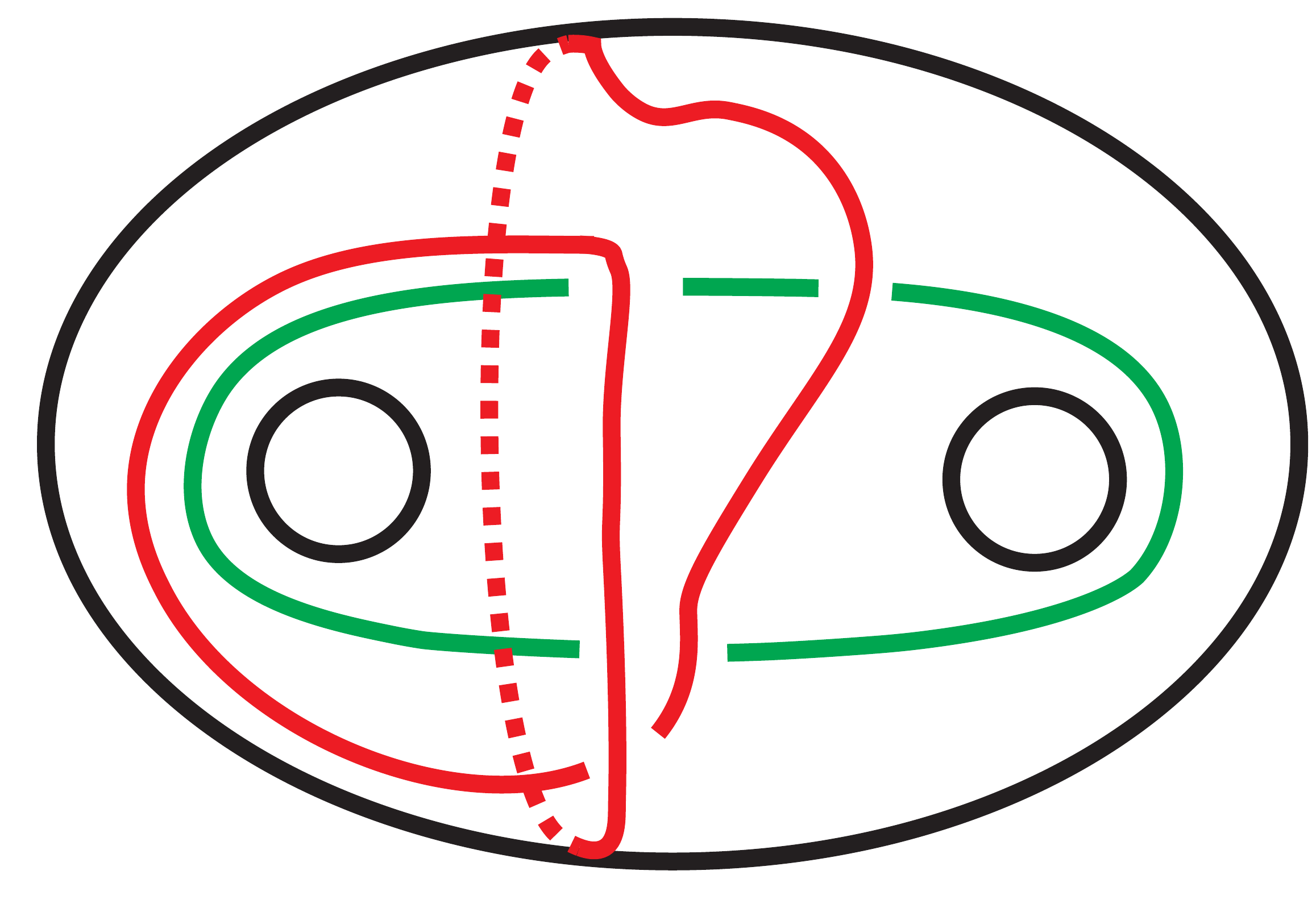}
\end{overpic}}} = -q^4 \vcenter{\hbox{\begin{overpic}[scale=.075]{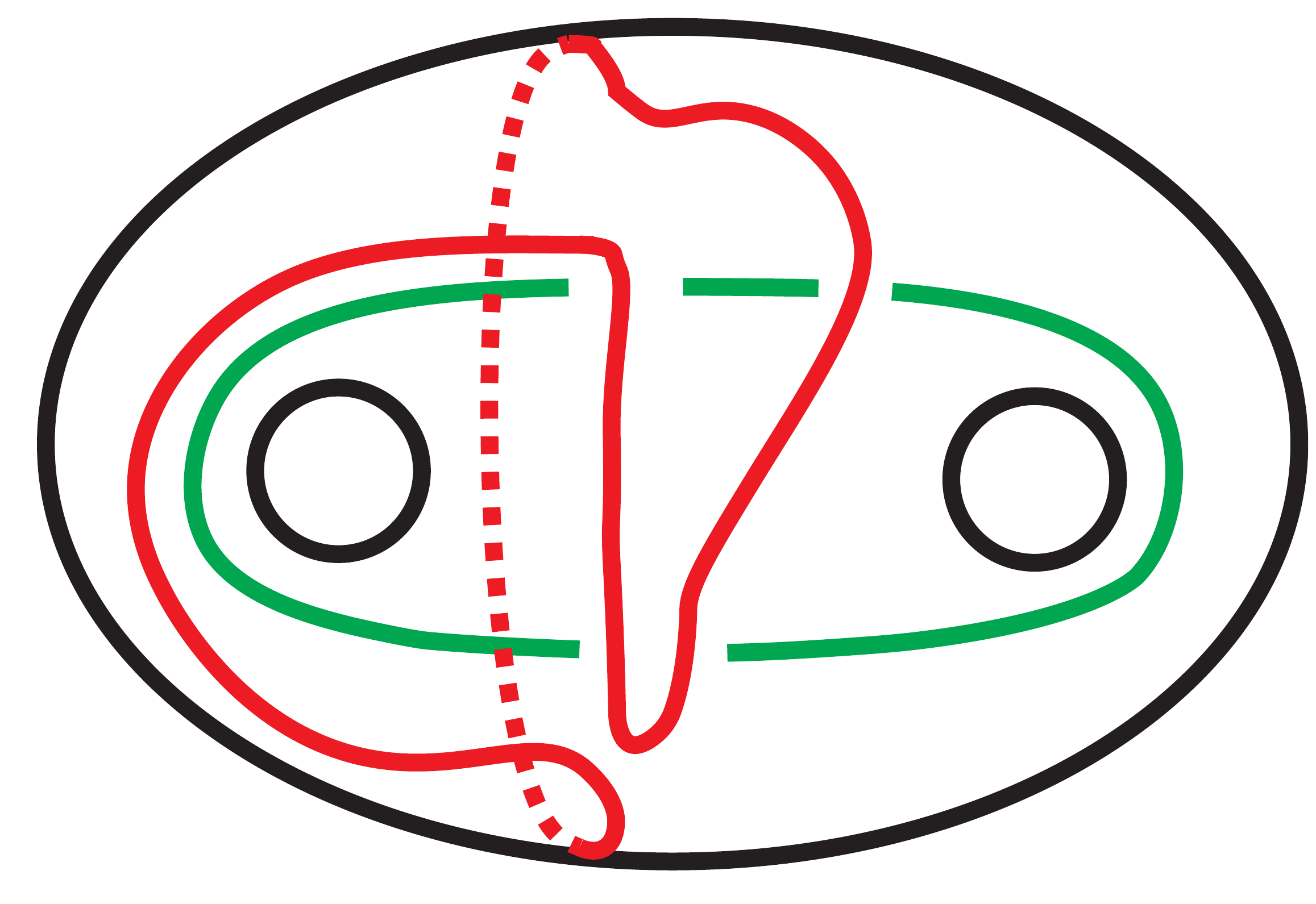}
\end{overpic}}}  -q^2 \vcenter{\hbox{\begin{overpic}[scale=.075]{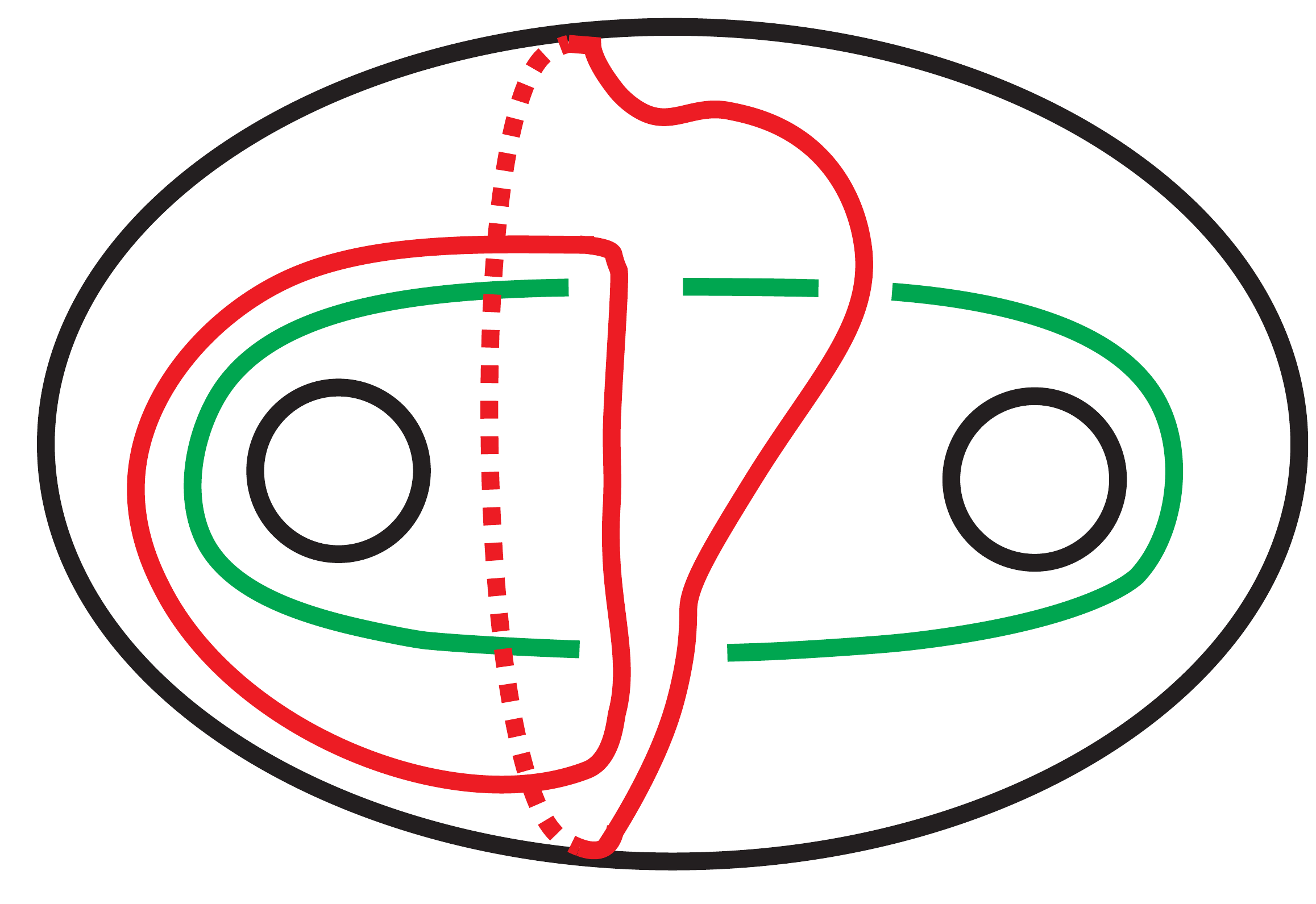}
\end{overpic}}}\\ & = - q^4 x_1 z  - q^2 (-q^2 - q^{-2} x_1 z \pmod {\cK_3}\\
& = x _1 z \pmod {\cK_3} =  x _1 z \pmod {\cK_1}= (a_3 z)_{(1)} \pmod {\cK_1}. \qedhere
\end{align}
\epr

\subsection{Proof of Lemma \ref{r1}}

\def\hR{{\hat R}}
\def\cI{\mathcal I}
\def\hT{{\hat T}}

\begin{proof} The first and second type Chebyshev polynomials, $T_n(x)$ and $S_n(x)$, respectively, can be defined by requiring that if $x= u+ u^{-1}$  then
$$ T_n(x) = u^n + u^{-n} \ \text{and} \ S_n(x) = \frac{u^{n+1}-u^{-n-1}}{u-u^{-1}}.$$
Note that the definitions are valid for all integers $n\in \BZ$. We have 
\begin{align}
T_{-n} &= T_n, \ S_{-n-1} = - S_{n-1} \\
T_1 T_n & =  S_{n+1} - S_{n-3}. 
\end{align}

Note that $T_0(x)=2$, while $T_n(x)$ is a monic polynomial of degree $n$ for $n \ge 1$. Let $\hT_n:= T_n$ for $n \neq 0$ and $\hT_0=1$. 
 This modification guarantees that $\{ \hT_n(y), n \ge 0\}$ is a basis of $\hR[y]$ over $\hR$. 
Hence, $\{w(a_1* \hT_n(y)), n \ge 0\}$ spans $K_1$ over $\hR$. See Figure \ref{fig: a1*Tn} for an illustration. 

\begin{figure}[h]
    \centering
$\vcenter{\hbox{\begin{overpic}[scale=.075]{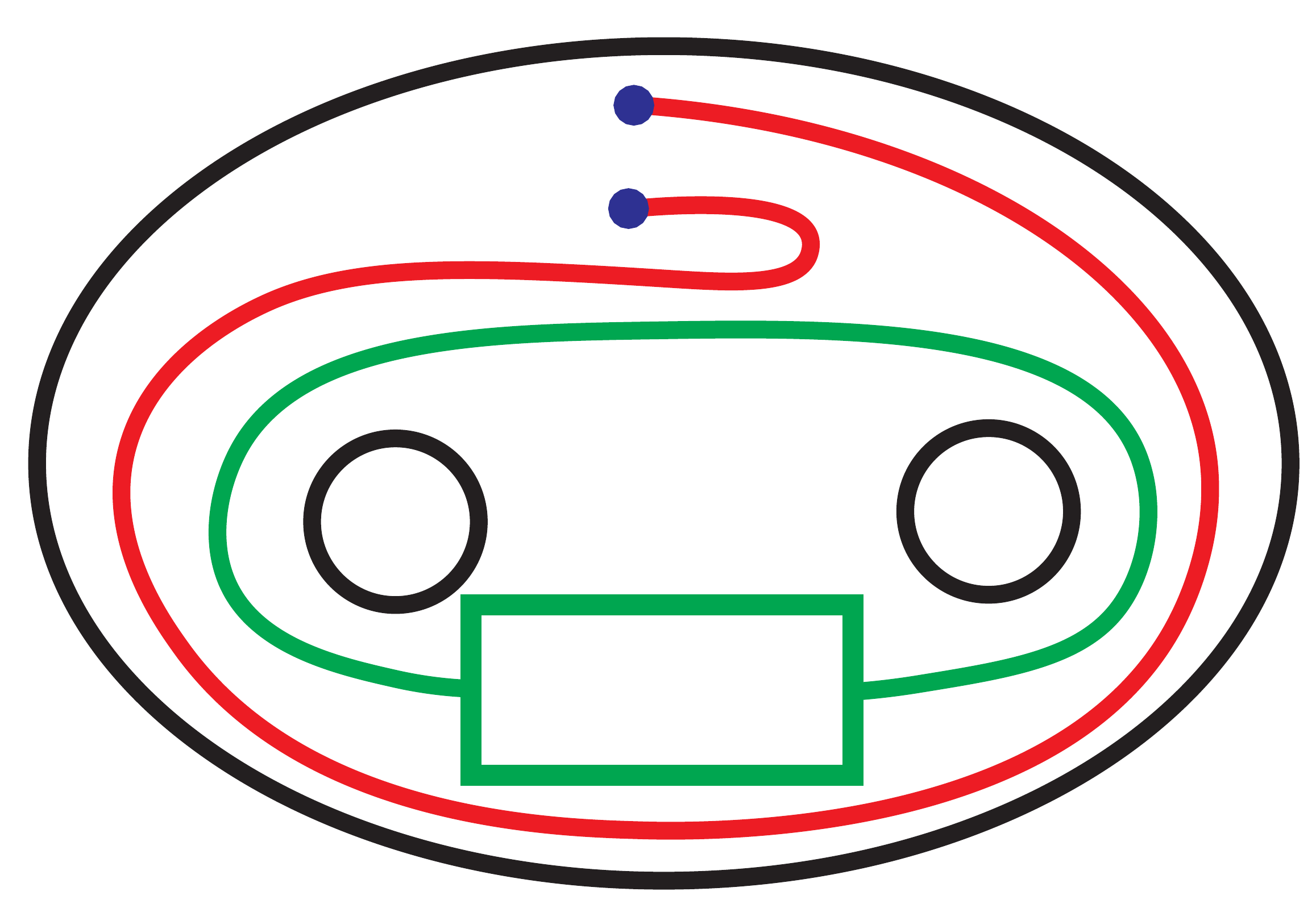}
\put(29, -11){$a_{1}* \hat T_n$}
\put(37, 10.5){\tiny{$\hat T_n$}}
\end{overpic}}} \hspace{4mm}
\vcenter{\hbox{\begin{overpic}[scale=.075]{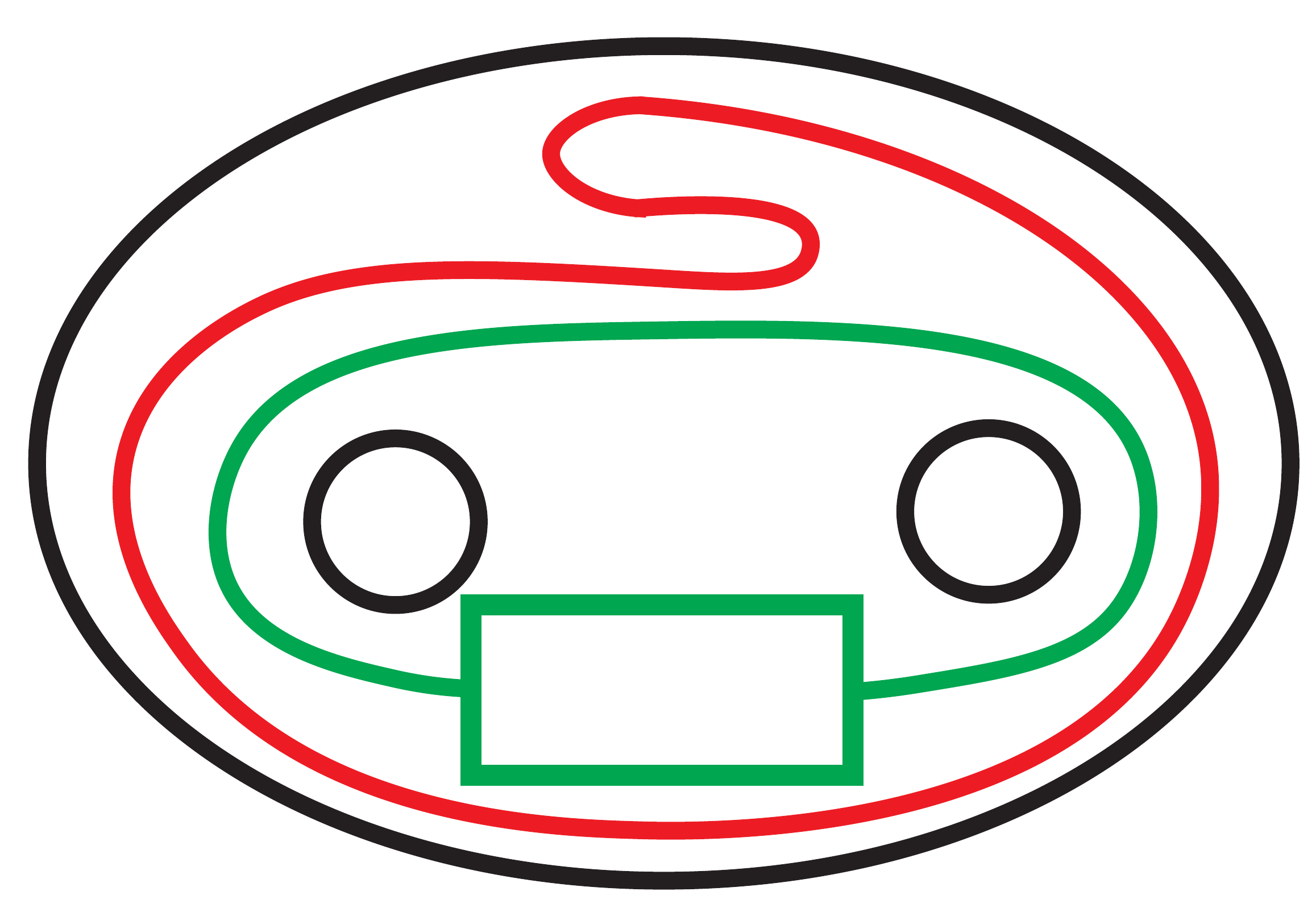}
\put(22, -10){$(a_{1}* \hat T_n)_{(1)}$}
\put(37, 10.5){\tiny{$\hat T_n$}}
\end{overpic}}} \hspace{4mm}
\vcenter{\hbox{\begin{overpic}[scale=.075]{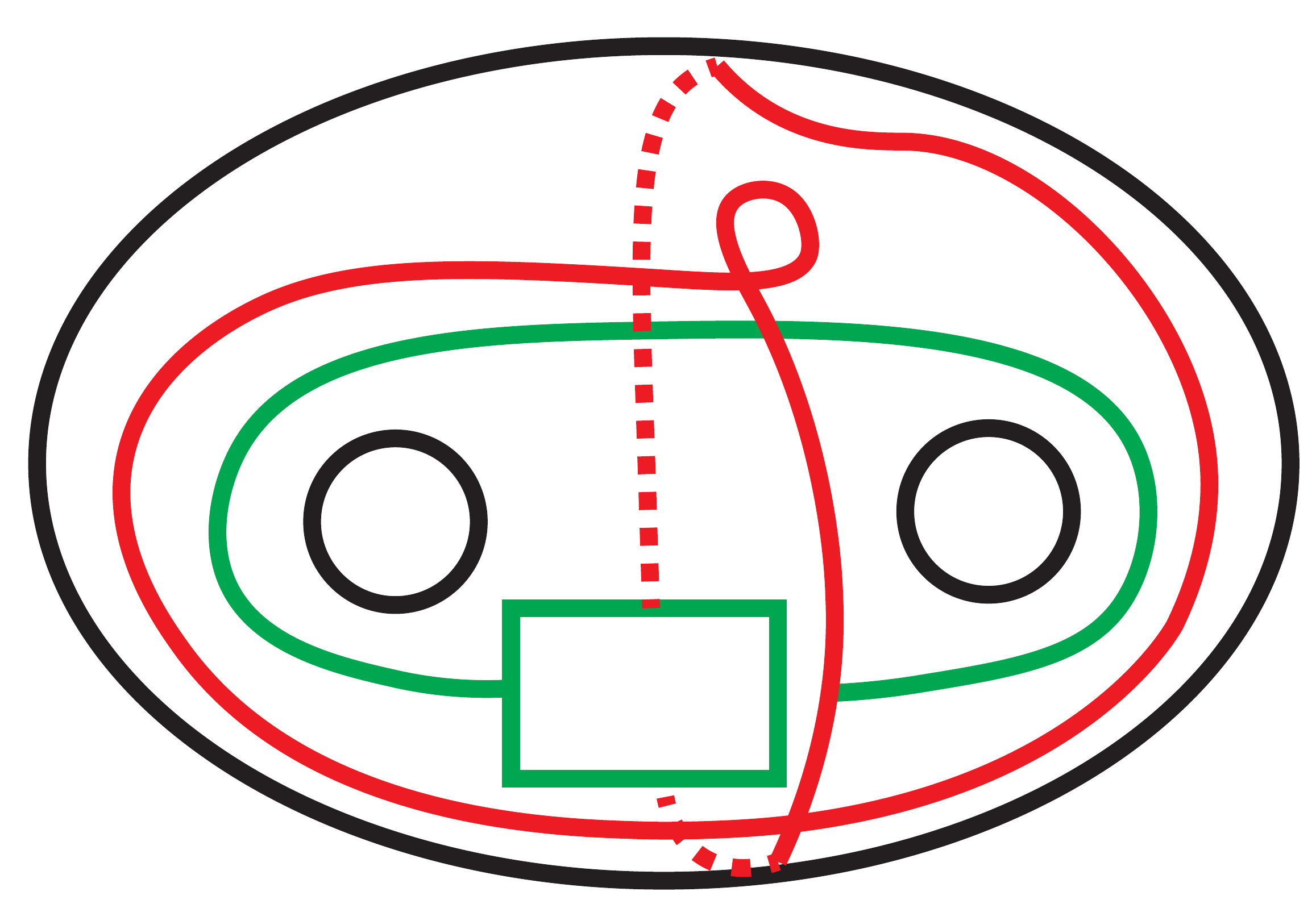}
\put(22, -10){$(a_{1}* \hat T_n)_{(2)}$}
\put(37, 10.5){\tiny{$\hat T_n$}}
\end{overpic}}}$
\vspace*{2mm}
    \caption{The curves $a_{1}* \hat T_n, (a_{1}* \hat T_n)_{(1)}$ and $(a_{1}* \hat T_n)_{(2)}$.}
    \label{fig: a1*Tn}
\end{figure}

We have
\begin{align}
(a_1* \hT_n(y))_{(1)}  &= y T_n(y) \\
(c_2* \hT_n(y))_{(1)}  &=  q^6 \sigma_n, \text{where} \ \sigma_n = 
    \centering
\vcenter{\hbox{\begin{overpic}[scale=.075]{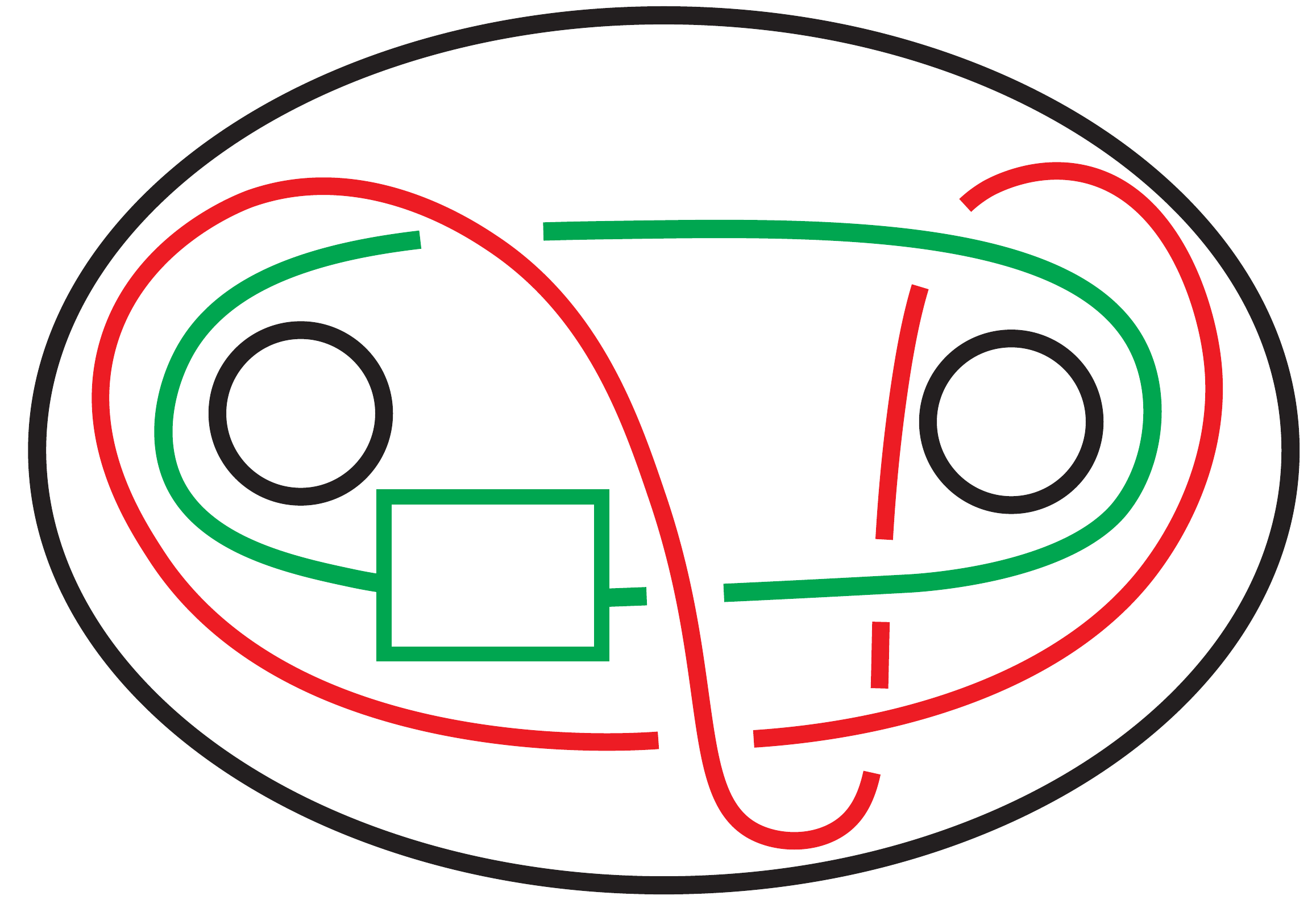}
\put(27, 18){\tiny{$\hat T_n$}}
\end{overpic}}}
\end{align}

For $n \ge 1$ let  
\begin{equation}\label{eqn:fn}
 F_n := -q^{-2n -4} \, w( a_1 * \hT_{n-1}(y)= q^{-2n +4}[\sigma_{n-1} - q^{-6}y\hT_{n-1}(y)].  
\end{equation} 
Then $\{ F_n, n \ge 1 \}$ spans $K_1$ over $\hR$.   We will show  $\cK_1=\fG$  by showing  that
$\{F_n , n \geq 1\}$ and $\{ G_n, n \geq 1 \}$ are related by a lower unitriangular matrix.

Extend the definition of $\{k\}$ and $G_k$ to all integer values of $k$ by the same formulas:
\begin{equation}\label{eqn:jn}
 \{k\} = q^{2k} - q^{-2k}, \quad    G_k :=  \{k+1\}S_k(y) + (-1)^{(k+1)}\{1\}S_k(x_1)S_k(x_2) \in \hat R[y]. 
\end{equation}
Note that $ G_{k-1} =  G_{-k-1}$ and $ G_0 =  G_{-1} =  G_{-2} = 0$. Also, $deg_y( G_k)=k$, for $k \geq 1$.

Define the following elements of $\fG$:
\begin{align}
Q_n &:= \frac{1}{2}\sum \limits_{k=0}^n   G_{n-2k-1} = \frac{1}{2}( G_{n-1} +  G _{n-3} + \hdots +  G _{-n-1}), \label{eq.Qn} \\
U_n  & :=  G_n  + x_1 x_2  G_{n-1}  -  G_{n-2}  + (x_1^2 + x_2 ^2)Q_{n-1}  + 2x_1x_2Q_{n-2}. \label{eq.Un}
\end{align}
Then both $U_n$ and $ G_n$ are in $\fG$, and both have the same degree $n$ and the same leading term,  i.e. $ \deg_y(U_n -G_n ) \le n-1$. Hence $\{U_n , n \geq 1\}$ and $\{ G_n, n \geq 1 \}$ are related by a lower unitriangular matrix. It follows that $\{U_n , n \geq 1\}$ spans $\fG$  over $\hR$.
\begin{lemma}\label{r.maintech}

We have $F_1 =G_1$ and for $n\geq 2$,

\begin{equation}\label{eqn:lemma}
F_n = U_n - q^{4-4n}U_{n-2}.    
\end{equation}
\end{lemma}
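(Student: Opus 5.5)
The plan is to compute $F_n$ in closed form as an element of $\hR[y]$ and then check \eqref{eqn:lemma} as an identity of polynomials. The geometric input is the element $\sigma_n$ of Figure \ref{fig: a1*Tn}. It is obtained from $\hT_n(y)$ by closing up the arc $a_1$ around the attaching region, so it is the threading of $T_n$ along $y$ with one extra strand of $y$ running over the twisted band. I would invoke the Esebre--Gelca identity \cite{EG} to expand $\sigma_n$ in the basis $\{\hT_m(y)\}$ of $\hR[y]$. I expect it to take the form
\[
\sigma_n=\alpha_n\, T_{n+1}(y)+\beta_n\, T_{n-1}(y)+\sum_{k\ge 0} c_{n,k}(x_1,x_2)\, T_{n-1-2k}(y),
\]
with $\alpha_n,\beta_n$ monomials in $q$. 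The coefficients $c_{n,k}$ should come from the products $x_1x_2$ and $x_1^2+x_2^2$: resolving the crossings of the $y$-strand with the $T_n$-strands near the two holes produces $x_1$ and $x_2$ parallel copies. Small cases $n=0,1$ I would check directly by resolving crossings, which should in particular give $F_1=G_1$, i.e. $\{2\}y+\{1\}x_1x_2$ up to the normalization in \eqref{eqn:fn}.

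Next, I would rewrite everything in the variable $u$ with $y=u+u^{-1}$, and similarly $x_i=u_i+u_i^{-1}$. In these variables $T_m(y)=u^m+u^{-m}$ and
\[
S_m(y)=\frac{u^{m+1}-u^{-m-1}}{u-u^{-1}}.
\]
The quantities $\{k\}$, $G_k$, and hence $Q_n$ and $U_n$, become explicit Laurent expressions. The sum defining $Q_n$ telescopes after multiplying by $u-u^{-1}$, because $\sum_k S_{n-2k-1}$ is a geometric sum. The same holds for the $S_k(x_1)S_k(x_2)$ parts: they factor as products of two such fractions, so $\sum_k (-1)^{k}S_k(x_1)S_k(x_2)$-type sums telescope as well. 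Using $T_1T_n=S_{n+1}-S_{n-3}$ and the symmetry $G_{k-1}=G_{-k-1}$, I would then reduce both $F_n$ and $U_n-q^{4-4n}U_{n-2}$ to the form
\[
A(q,u)+B(q,u,u_1,u_2)
\]
times a common denominator. Here $A$ is the pure $y$-part and $B$ is the $x_1,x_2$-dependent part.

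The verification then splits into two comparisons. First I would compare the coefficients of pure powers of $u$, which come from the $\{n+1\}S_n(y)$ terms and from $\alpha_n,\beta_n$. Second I would compare the terms involving $x_1x_2$ and $x_1^2+x_2^2$. The combination $U_n-q^{4-4n}U_{n-2}$ is designed so that the middle terms cancel pairwise, leaving only boundary terms that should match $\sigma_{n-1}-q^{-6}y\hT_{n-1}(y)$. The shift from $n$ to $n-2$ combined with the factor $q^{4-4n}$ should mirror the two-step recursion in the $\hT$-expansion of $\sigma_{n-1}$. The bookkeeping in this step is routine but long.

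The main obstacle I expect is getting the exact form and normalization of $\sigma_n$ from \cite{EG}. This includes the framing powers of $q$ coming from the kink (the factors $q^6$ and $-q^{\pm3}$ in \eqref{eq.kink}), and the precise coefficients of the $x_1,x_2$ terms. Everything after that is a finite algebraic identity. If the general formula is awkward to match, I would instead derive a three-term recursion for $\sigma_n$ in $n$, obtained by multiplying by $y$ and using $yT_n=T_{n+1}+T_{n-1}$ together with one skein resolution. I would then show that $U_n-q^{4-4n}U_{n-2}$ satisfies the same recursion with the same initial values $n=1,2$, so that \eqref{eqn:lemma} follows by induction on $n$.
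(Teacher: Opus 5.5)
Your route is essentially the paper's: you cite the Esebre--Gelca expansion of $\sigma_n$ and then verify the identity by direct algebra. The only difference is organizational: the paper splits $U_n=U'_n+U''_n$, checks $F_{n+1}=U'_{n+1}-q^{-4n}U'_{n-1}$ directly from that formula, and shows $U''_n=0$ separately by a degree-$4$ linear recurrence in $n$ checked at $n=0,1,2,3$; that last step is where your telescoping of the $S_k(x_1)S_k(x_2)$-sums would land.

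One correction to your expected shape of $\sigma_n$: it must also contain $x_1x_2\,S_{n-2j}(y)$ terms, which have the opposite $y$-parity to $T_{n+1}(y)$. The actual formula has $(q^{4n}-q^{-4})x_1x_2S_n(y)$, $(q^{-4}-q^{-4n})x_1x_2S_{n-2}(y)$ and a sum over $q^{-4k}x_1x_2S_{n-2k-2}(y)$, and these are exactly what match the $x_1x_2G'_n$ and $2x_1x_2Q'_{n-1}$ pieces of $U_{n+1}$.
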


The lemma shows that $\{F_n , n \geq 1\}$ and $\{ U_n, n \geq 1 \}$ are related by a lower unitriangular matrix. Hence, the $\hR$-span of $\{ U_n, n \geq 1 \}$ is equal to the $\hR$-span of $\{ F_n, n \geq 1 \}$, or $\fG= \cK_1$.

We now prove Lemma \ref{r.maintech}.

Let us write $ G_k = G'_k +  G''_k$, where
$$ G'_k:= \{k+1\}S_k(y) \ \text{and} \  G''_k := (-1)^{k+1}\{1\}S_k(x_1)S_k(x_2).$$

For  $\varepsilon \in \{ ', ''\}$, define  $Q^\varepsilon_n $ and $U^\varepsilon_n$ using the same formulas \eqref{eq.Qn} and \eqref{eq.Un}, with every $G,Q,U$ replaced by $G^\varepsilon, Q^\varepsilon, U^\varepsilon$, respectively. Then
\be  U_n = U'_n + U''_n.
\ee

\blem \label{r.222}
For $n \ge 1$ we have
\be F_{n+1} = U'_{n+1} - q^{-4n} U'_{n-1}. 
\ee
\elem
\bpr

In \cite{EG},
 Esebre and Gelca  showed that in $\cS(D_2)$,   
 \no{   \begin{equation}\label{gelcaesebreformula}
    \begin{split}
\sigma_n & = q^{4n+2}S_{n+1}(y)+q^{-4n-2}S_{n-1}(y) - q^{4n-2}S_{n-1}(y)  \\
 & + q^{4n} x_1 x_2 S_{n}(y) - q^{-4n} x_1 x_2 S_{n-2}(y)   - q^{-4n+2}S_{n-3}(y)
- q^{-4} x_1 x_2 S_{n}(y)  + q^{-4} x_1 x_2 S_{n-2}(y) \\ 
& + q^{-2}(q^{4n} - 1) (x_1^2 + x_2^2)\sum \limits_{k = 0}^{n-1}q^{\red{-4k}}S_{n-2k-1}(y)
+ 2q^{-4}(q^{4n} - 1) x_1 x_2 \sum \limits_{k = 0}^{n-1}q^{\red{-4k}}S_{n-2k-2}(y).
\end{split}
\end{equation}
}
\begin{equation}
    \begin{split}
\sigma_n & = q^{4n+2}S_{n+1}(y)  +( q^{4n}- q^{-4}) x_1 x_2 S_{n}(y) + (q^{-4n-2}  - q^{4n-2} )S_{n-1}(y)    \\
 & +( q^{-4}- q^{-4n}) x_1 x_2 S_{n-2}(y) - q^{-4n+2}S_{n-3}(y)
 \\ 
& + q^{-2}(q^{4n} - 1) (x_1^2 + x_2^2)\sum \limits_{k = 0}^{n-1}q^{-4k}S_{n-2k-1}(y)
+ 2q^{-4}(q^{4n} - 1) x_1 x_2 \sum \limits_{k = 0}^{n-1}q^{-4k}S_{n-2k-2}(y).
\end{split}
\end{equation}
With this result, the proof of the lemma is a direct calculation, the details of which are provided below.

Using $F_{n+1} = q^{-2n+2}(\sigma_n - q^{-6}y T_n)$ with $yT_n = S_{n+1} - S_{n-3}$, we get
 \begin{align*}
        F_{n+1} & = G'_{n+1}(y)  + x_1 x_2 G'_{n}(y) -(1+ q^{-4n})G'_{n-1}(y) +  q^{-4n}G'_{n-3}(y)   +q^{-4n} x_1 x_2 G'_{n-2}(y) 
 \\ 
& + \{n\} (x_1^2 + x_2^2) ( q^{-4n} S_{n-1} + q^{-2n} Q'_n  )
+ 2q^{-2n}\{n\} x_1 x_2 Q'_{n-1} \\
& = G'_{n+1}(y)  + x_1 x_2 G'_{n}(y) -G'_{n-1}(y) + (x_1^2 + x_2^2)  Q'_n  
+2 x_1 x_2 Q'_{n-1}
 \\ 
& - q^{-4n} \left[ G'_{n-1}(y) +  (x_1^2 + x_2^2)  G'_{n-2} 
+ 2  x_1 x_2 Q'_{n-1} - G'_{n-3}(y)   - x_1 x_2 G'_{n-2}(y) 
\right] \\
& =  G'_{n+1}(y)  + x_1 x_2 G'_{n}(y) -G'_{n-1}(y) + (x_1^2 + x_2^2)  Q'_n  
+2 x_1 x_2 Q'_{n-1}
 \\ 
& - q^{-4n} \left[ G'_{n-1}(y) +  x_1 x_2 G'_{n-2}(y)- G'_{n-3}(y) + (x_1^2 + x_2^2)  G'_{n-2} 
+ 2  x_1 x_2 Q'_{n-1}\right] \\ 
&  +q^{-4n} 2x_1 x_2 G'_{n-2}(y) \\
& = G'_{n+1}(y)  + x_1 x_2 G'_{n}(y) -G'_{n-1}(y) + (x_1^2 + x_2^2)  Q'_n  
+2 x_1 x_2 Q'_{n-1}
 \\ 
& - q^{-4n} \left[ G'_{n-1}(y) +  x_1 x_2 G'_{n-2}(y)- G'_{n-3}(y) + (x_1^2 + x_2^2)  G'_{n-2} 
+ 2  x_1 x_2 Q'_{n-3}\right] \\
& = U'_{n+1} - q^{-4n} U'_{n-1}.
    \end{align*}
\epr

\no{
 \begin{equation}\label{gelca2}
    \begin{split}
\sigma_{n-1} & = q^{4n-2}S_{n}(y)+q^{-4n+2}S_{n-2}(y) - q^{4n-6}S_{n-2}(y) - q^{-4n+6}S_{n-4}(y)  \\
 & + q^{4n-4} x_1 x_2 S_{n-1}(y) - q^{-4n+4} x_1 x_2 S_{n-3}(y) 
- q^{-4} x_1 x_2 S_{n-1}(y)  + q^{-4} x_1 x_2 S_{n-3}(y) \\ 
& + q^{-2}(q^{4n-4} - 1) (x_1^2 + x_2^2)\sum \limits_{k = 0}^{n-2}q^{\red{-4k}}S_{n-2k-2}(y)
+ 2q^{-4}(q^{4n-4} - 1) x_1 x_2 \sum \limits_{k = 0}^{n-2}q^{\red{-4k}}S_{n-2k-3}(y).
\end{split}
\end{equation}
}

\no{

$$U_n' := \mathcal G_n' + x_1 x_2 \mathcal G_{n-1}' - \mathcal G_{n-2}' + (x_1^2 + x_2 ^2)Q_{n-1}' + 2x_1x_2Q_{n-2}'.$$

$$U_{n+1}' := \mathcal G_{n+1}' + x_1 x_2 \mathcal G_{n}' - \mathcal G_{n-1}' + (x_1^2 + x_2 ^2)Q_{n}' + 2x_1x_2Q_{n-1}'.$$
$$U_{n-1}' := \mathcal G_{n-1}' + x_1 x_2 \mathcal G_{n-2}' - \mathcal G_{n-3}' + (x_1^2 + x_2 ^2)Q_{n-2}' + 2x_1x_2Q_{n-3}'.$$

$$ U_{n+1}' - q^{-4n} U_{n-1}' = \mathcal G_{n+1}' + x_1 x_2 \mathcal G_{n}' - \mathcal G_{n-1}' + (x_1^2 + x_2 ^2)Q_{n}' + 2x_1x_2Q_{n-1}'$$
 $$ -q^{-4n} (  \mathcal G_{n-1}' + x_1 x_2 \mathcal G_{n-2}' - \mathcal G_{n-3}' + (x_1^2 + x_2 ^2)Q_{n-2}' + 2x_1x_2Q_{n-3}'  )$$
 $$= \mathcal G_{n+1}' + x_1 x_2 \mathcal G_{n}' -( 1 + q^{-4n})\mathcal G_{n-1}' - q^{-4n} x_1 x_2 G'_{n-2} + q^{-4n} G'_{n-3}  $$
 }

\def\tR{{\tilde R}}

\begin{lemma}\label{r.333}
For all $n \geq 0$, we have $U''_n = 0$.
    \end{lemma}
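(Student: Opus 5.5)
Since $\{1\}$ is a common scalar factor of every $G''_k$, and $U''_n$ is obtained from the $G''_k$ by the $\hR$-linear formulas \eqref{eq.Qn} and \eqref{eq.Un}, it suffices to prove the identity with $g_k:=(-1)^{k+1}S_k(x_1)S_k(x_2)$ in place of $G''_k$. We may work in $\BQ(s,t)$, with $x_1=s+s^{-1}$ and $x_2=t+t^{-1}$; this ring contains $\hR$, and there the factor $\tfrac12$ in \eqref{eq.Qn} is harmless. The plan is to diagonalize $g_k$ as a sum of geometric sequences and then verify the resulting identity for a single geometric sequence.

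\emph{Step 1.} Using $S_k(x)=\frac{u^{k+1}-u^{-k-1}}{u-u^{-1}}$, which is valid for all $k\in\BZ$, we get
$$(s-s^{-1})(t-t^{-1})\,g_k=\sum_{\varepsilon_1,\varepsilon_2=\pm1}\varepsilon_1\varepsilon_2\,\lambda_{\varepsilon}^{k+1},\qquad \lambda_\varepsilon:=-s^{\varepsilon_1}t^{\varepsilon_2}.$$
This holds for all integers $k$, which matters because $Q_m$ involves negative indices down to $g_{-m-1}$. Since $U''_n$ depends linearly on the sequence $(g_k)$, it suffices to show that the expression \eqref{eq.Un} vanishes when $g_k$ is replaced by the combination $\lambda^{k+1}+\lambda^{-k-1}$. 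The four roots pair up as $\lambda_{(1,1)},\lambda_{(-1,-1)}$ and $\lambda_{(1,-1)},\lambda_{(-1,1)}$; the members of each pair are mutually inverse and carry the same sign $\varepsilon_1\varepsilon_2$.

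\emph{Step 2.} For $g_k=\lambda^{k+1}+\lambda^{-k-1}$ the sum in \eqref{eq.Qn} is a geometric sum: $Q_m=\tfrac12\sum_{j=-m}^{m,\ \text{step }2}(\lambda^j+\lambda^{-j})=\sum_{j}\lambda^{j}=S_m(\mu)$, where $\mu=\lambda+\lambda^{-1}$. Likewise $g_k=T_{k+1}(\mu)$. Thus $U''_n/\{1\}$ becomes
$$T_{n+1}(\mu)+x_1x_2T_n(\mu)-T_{n-1}(\mu)+(x_1^2+x_2^2)S_{n-1}(\mu)+2x_1x_2S_{n-2}(\mu).$$

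\emph{Step 3.} Use $T_{n+1}-T_{n-1}=(\mu^2-4)S_{n-1}$ and $T_n=S_n-S_{n-2}=\mu S_{n-1}-2S_{n-2}$. The $S_{n-2}$ terms then cancel against $2x_1x_2S_{n-2}$, and the expression reduces to
$$S_{n-1}(\mu)\bigl(\mu^2-4+x_1x_2\mu+x_1^2+x_2^2\bigr).$$
It remains to check that every $\mu=\lambda_\varepsilon+\lambda_\varepsilon^{-1}$ is a root of $\mu^2+x_1x_2\mu+x_1^2+x_2^2-4$. The two values of $\mu$ are $-(st+s^{-1}t^{-1})$ and $-(st^{-1}+s^{-1}t)$. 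Their sum is $-x_1x_2$ and their product is $(st+s^{-1}t^{-1})(st^{-1}+s^{-1}t)=s^2+s^{-2}+t^2+t^{-2}=x_1^2+x_2^2-4$. Hence they are exactly the two roots of this quadratic, and the lemma follows; the small cases $n\le 1$ are covered by the same computation since all formulas hold for every integer index.

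\emph{Main obstacle.} I expect the main difficulty to be bookkeeping rather than ideas: confirming that the sign $(-1)^{k+1}$ is correctly absorbed into $\lambda$, and that the definition of $Q_m$, including the half-weight and the negative indices, gives exactly $S_m(\mu)$. Once this is checked, Step 3 is the Vieta computation above.
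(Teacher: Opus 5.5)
Your proof is correct, and it takes a genuinely different route from the paper.

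\textbf{The two routes.} The paper argues by recurrences. It notes that $G''_n/\{1\}$ satisfies a constant-coefficient linear recurrence of order $4$, infers that $U''_n/\{1\}$ satisfies one too, and appeals to an unshown ``quick calculation'' that $U''_n/\{1\}=0$ for $n=0,1,2,3$. You instead evaluate $U''_n$ in closed form on each eigen-sequence. After splitting $(s-s^{-1})(t-t^{-1})g_k$ into the two inverse pairs $\lambda^{\pm(k+1)}$, the expression factors as $S_{n-1}(\mu)\,(\mu^2+x_1x_2\mu+x_1^2+x_2^2-4)$, and Vieta shows that both values of $\mu$ are roots of the quadratic.

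\textbf{What your route buys.} First, it needs no base cases. Second, it makes explicit something the paper passes over. The step ``hence $U''_n$ also satisfies the recurrence'' requires the $Q$-terms to stay inside the span of the same four geometric sequences. This holds because the partial sums over $j=m,m-2,\dots,-m$ produce $\lambda^{\pm m}$ and the eigenvalue set is closed under inversion. Third, it identifies the eigenvalues correctly as $-u_1^{\pm1}u_2^{\pm1}$, which are your $\lambda_\varepsilon$; the paper's ``$u_1^{\pm1}$ and $u_2^{\pm1}$'' is a slip. Finally, it explains where the coefficients $x_1x_2$, $x_1^2+x_2^2$, $2x_1x_2$ in $U_n$ come from: they encode the quadratic whose roots are the two traces $\mu$.

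\textbf{What the paper's route buys.} It is shorter to state and is a mechanical template: any identity of this shape reduces to finitely many checks.

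\textbf{A small correction.} Your remark that ``all formulas hold for every integer index'' fails at $n=0$. There $Q_{-2}$ appears. As an empty sum it is $0$, whereas $S_{-2}(\mu)=-1$. So for a single pair $\lambda^{\pm(k+1)}$ the expression actually equals $T_1(\mu)+2x_1x_2-T_{-1}(\mu)=2x_1x_2$, not $S_{-1}(\mu)\cdot(\text{quadratic})=0$. The conclusion survives, because this discrepancy is independent of $\lambda$ and cancels in the signed difference of the two pairs. Equivalently, you can check directly that $U''_0=G''_0-G''_{-2}=-\{1\}+\{1\}=0$. So treat $n=0$ separately; your closed form is valid verbatim for $n\ge 1$.
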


\proof Note that $G''_n/\{1\}$ is in the ring $\BZ[x_1, x_2]$, which embeds into $\tR= \BZ[ u_1^{\pm 1}, u_2^{\pm 1}]$ via $x_1= u_1 +u_1^{-1}$ and $x_2=  u_2 + u_2^{-1}$.
The sequence $( G''_n/\{1\})_{n=0}^\infty$ satisfies a homogeneous linear recurrence relation with constant $\tR$-coefficients of degree $4$, with four eigenvalues $u_1^{\pm 1}$ and $u_2^{\pm 1}$. Hence $(U''_n/\{1\})$ also satisfies a degree $4$ recurrence relation. A quick calculation shows that $U''_n/\{1\} = 0$ for $n = 0,1,2$, and $3$.
This shows that $U''_n =0$ for all $n\geq 0$. 
\end{proof}
As $U_n = U'_n + U''_n$, from Lemmas \ref{r.333} and \ref{r.222} we get Lemma \ref{r.maintech}. This completes the proof of Theorem \ref{thm1a} and we have the explicit structure of the KBSM of the connected sum of two solid tori.\qed

\def\Rzero{{R^{[0]}}}
\def\Rloc{{R_{\mathrm{loc}}}}

\subsection{Consequences}
 \begin{corollary}\label{r.cons1}
 
 \begin{enumerate} \hfill
 
 \item The map $\hR \embed \hR[y]$ descends to an embedding $\hR \embed \SH$. 
 
 \item Denote the image of the map in part (1) by $\Rzero$. The quotient $\SH/\Rzero$ is a direct sum of torsion cyclic modules
 \be 
 \frac{\mathcal S(H_1 \ \# \ H_1)}{\Rzero} = \bigoplus \limits _{n=1}^{\infty} \frac{\hat R}{q^{2n+2}-q^{-2n-2}}= \bigoplus \limits _{n=1;n_2, n_3=0}^{\infty} \frac{R}{q^{2n+2}-q^{-2n-2}},
 \ee
 where the summand corresponding to $(n,n_2,n_3)$ is $R/(q^{2n+2}-q^{-2n-2})$.
 \item Let $\Rloc$ be the ring $R$ with the inverses of $\{ n\}$ adjoined for all $n \ge 1$. Then
 \begin{equation*}
     \SH \ot_R \Rloc \cong \Rloc[x_1, x_2]
 \end{equation*} 

This ring $\Rloc$ is the smallest extension of $R$, for which this result holds.

\item For each $n \ge 1$,  $t_n:=G_n/\{1\} \pmod {\fG}$ is a non-zero torsion element of $\SH$. Namely $\{1\} t_n=0$.
 
 \end{enumerate}

 \end{corollary}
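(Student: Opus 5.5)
Everything follows from Theorem \ref{thm1a}: $\SH \cong \hR[y]/\fG$ as $\hR$-modules, with $\fG$ the $\hR$-span of $\{G_n\}_{n\ge1}$. The plan is to change basis. Since $\{S_n(y)\}_{n\ge 0}$ is an $\hR$-basis of $\hR[y]$, we can write
\[
\hR[y] = \hR\, S_0(y) \oplus \bigoplus_{n\ge1} \hR\, S_n(y).
\]
In this basis each generator reads $G_n = \{n+1\} S_n(y) + c_n\, S_0(y)$, where $c_n := (-1)^{n+1}\{1\}S_n(x_1)S_n(x_2) \in \hR$. The relations are therefore "diagonal up to the constant term", and all four parts can be read off from this normal form.

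\textbf{Part (1).} We need $\fG \cap \hR = 0$. Take $\sum_n r_n G_n \in \hR$ with $r_n\in\hR$, almost all zero. Its coefficient of $S_n(y)$ for $n\ge1$ is $r_n\{n+1\}$. Because $\hR = R[x_1,x_2]$ is a domain and $\{n+1\}\ne0$, every $r_n=0$. Hence $\hR \embed \SH$.

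\textbf{Part (2).} Passing to $\SH/\Rzero$ kills $S_0$. The quotient becomes
\[
\bigoplus_{n\ge1} \hR S_n(y)\Big/\bigl(\{n+1\}\hR S_n(y)\bigr) = \bigoplus_{n\ge1} \hR/(\{n+1\}).
\]
We have $\{n+1\}=q^{2n+2}-q^{-2n-2}$, and $\hR$ is free over $R$ with basis $x_1^{n_2}x_2^{n_3}$. So each summand splits as $\bigoplus_{n_2,n_3} R/(\{n+1\})$.

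\textbf{Part (3).} After tensoring with $\Rloc$ the relations become $S_n(y) = -c_n/\{n+1\}$, so every $S_n(y)$ with $n\ge1$ is eliminated. Thus
\[
\SH\ot\Rloc \cong \Rloc[x_1,x_2] \quad\text{via part (1)}.
\]
For minimality, suppose $\SH\ot_R R'$ is free of this form, i.e. torsion-free. From part (2), the class of $S_n(y)$ generates a copy of $R'/(\{n+1\})$ over $R'$ inside the quotient by $\Rzero$. If the extension is to have the given form, the quotient must vanish, which forces $\{n+1\}$ to be a unit in $R'$ for all $n\ge1$. This gives inversion of $\{k\}$ for $k\ge2$. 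Inverting $\{1\}$ as well comes for free, since $\{1\}$ divides $\{2\}=\{1\}(q^2+q^{-2})$. This step is slightly delicate in how "smallest" should be formalized; I would phrase it as the requirement that the quotient $\SH/\Rzero$ vanish after base change.

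\textbf{Part (4).} Set $G_n/\{1\} = [n+1]' S_n(y) + (-1)^{n+1}S_n(x_1)S_n(x_2)$, where $[n+1]'=\{n+1\}/\{1\}\in R$. Clearly $\{1\}t_n = G_n \equiv 0$. For $t_n\ne0$, suppose $G_n/\{1\} = \sum_m r_m G_m$. Comparing $S_m(y)$ coefficients with $m\ge1$ forces $r_m=0$ for $m\ne n$ and $r_n\{n+1\}=\{n+1\}/\{1\}$, i.e. $r_n=1/\{1\}\notin\hR$, a contradiction. This is the only place where a divisibility argument in $\hR$ is needed, and it is immediate since $\hR$ is a domain with $\{1\}$ a non-unit.
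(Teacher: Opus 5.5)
Your proposal is correct and takes the same route as the paper. The paper's proof is just $\hR\cap\fG=0$ for (1) and ``follows easily from the form of $G_n$'' for (2)--(4). Your normal form $G_n=\{n+1\}S_n(y)+c_nS_0(y)$ in the $\hR$-basis $\{S_n(y)\}$ carries out in full exactly the computation being alluded to.

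The paper does not argue the minimality clause in (3) at all. Your formalization of it is the right one: the natural map $R'[x_1,x_2]\to\SH\otimes_R R'$ is onto, equivalently $(\SH/\Rzero)\otimes_R R'=0$. Under it your argument is complete. The gloss ``i.e.\ torsion-free'' plays no role in that argument and is best dropped.
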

 \bpr Since $\hR \cap \fG =\{0\}$ the map $\hR \embed \hR[y]$ descends to an embedding $\hR \embed \SH$, which proves part (1). Parts (2) and (3) follow easily from the form of $G_n$, while part (4) also follows from the explicit form of $G_n$.
 
 \epr

 Note that part (3) of Corollary \ref{r.cons1} is a generalization of a result of the third author \cite{connsum}, which states that $\mathcal S(H_1 \ \# \ H_1; \mathbb Q(q)) \cong \mathcal S(H_1; \mathbb Q(q)) \otimes_{\mathbb Q(q)} \mathcal S(H_1; \mathbb Q(q))$.

\def\Tor{{\mathsf{Tor}}}
\def\hR{\hat R}
\def\fG{\mathcal{G}}
\def\tF{{\mathsf{tF}}}
\def\Frac{{\mathsf{Frac}}}
\def\J{{\mathcal J}}

For a $\Zq$-module $M$ let $\Tor(M)$ be its torsion submodule, and $\tF(M) = M/\Tor(M)$ be the torsion-free quotient.
\bcor (a) The $\Zq$-torsion of $\SH$ is a direct sum of copies of $\Zq/\{1\}$ and hence 
is annihilated by $\{1\}$. More precisely
\be 
\Tor(\SH) \cong \bigoplus _{n=1}^\infty  \bigoplus _{n_1, n_2=0}^\infty \Zq/\{ 1\},
\ee
where the summand corresponding to $(n_1, n_2, n)$ is generated by $\phi(x_1^{n_1} x_2^{n_2} J_n)$. Here
\be J_n:= G_n/\{1\}=  [n+1]S_n(y) +  (-1)^{(n+1)} S_n(x_1)S_n(x_2) \}, \ [k] := \frac{\{ k\} }{\{1\}} \in \Zq.
\ee

(b) The torsion free quotient $\tF(\SH)$ is not a free $\Zq$-module.

(c) Over $\BQ[q^{\pm 1}]$, $\Tor(\SH)$ is a direct summand of $\SH$. Consequently,
\be \SH \cong \Tor(\SH) \oplus \tF(\SH), \ \text{over} \ \BQ[q^{\pm 1}].
\ee
\ecor

\bpr 

 (a) Let $\J$ be the $\hR$-module spanned by $J_n, n\ge 1$. Since $\deg_y J_n =n$, it is easy to see that $\J$ is free over $\hR$-module with basis $\{J_n, n\ge 1\}$. Similarly, $\fG$ is free over $\hR$-module with basis $\{G_n, n\ge 1\}$. It follows that 
\be \J/\fG \cong \bigoplus _{n=1}^\infty\hR\cdot J_n /\{1\} \cong  \bigoplus _{n=1}^\infty  \bigoplus _{n_1, n_2=0}^\infty \Zq/\{ 1\}.
\ee
We have the following exact sequence of $\hR$-modules
\be 
0 \to \J/\fG \to \hR[y]/\fG \to \hR[y] / \J \to 0.
\ee
Let $\Frac(\hR)$ be the  field of fractions of $\hR$.
The $\hR$-module $\hR[y] / \J $ is the result of adjoining $\{\frac{S_n(x_1) S_n(x_2)}{[n+1]}, n \ge 0 \}$, to $\hR$.
 The result is the $\hR$-submodule of $\Frac(\hR)$ spanned by $\Bigl\{\frac{S_n(x_1) S_n(x_2)}{[n+1]}, n \ge 0\Bigr \} $, which is a domain. 
 This shows that $ \J/\fG \cong \Tor(\SH)$.
 
 (b) Let $A= \Frac(\BZ[x_1, x_2]  )[q^{\pm1}]$. Then $\SH \ot_\Zq A$ is the result of adjoining $\Bigl \{\frac{1}{[n]}, n \ge 1 \Bigr \}$ to $A$, which is not a free $A$-module.
 
 (c) Since $\Tor(\SH)$ is annihilated by $\{1\}$, it is of bounded order in the sense of \cite{Kaplansky}. By \cite[Theorem 8 and Section 12]{Kaplansky}, over a principal ideal domain, if the torsion submodule has bounded order, then it is a direct summand.\epr

\bque In $\SH$ is there a  torsion element  annihilated by $z \in R$, which is coprime with $\{1\}$?
\eque

\begin{conjecture}\cite{rheasolomarche, s1s2connsumbkw}

\begin{enumerate} 
    \item The KBSM of any closed, prime, oriented $3$-manifold over $\mathbb Z[A^{\pm 1}]$ can be decomposed into the direct sum of free modules and torsion modules.
    
    \item The KBSM of any non-prime, oriented $3$-manifold over $\mathbb Z[A^{\pm 1}]$ cannot be decomposed into the direct sum of free modules and torsion modules.
    
\end{enumerate}

\end{conjecture}

\newpage


\begin{thebibliography}{999999}

\bibitem[Bak]{rheasolomarche}
R. P. Bakshi, A counterexample to the generalisation of Witten's conjecture, {\it Inverse problems: in memory of Zbigniew Oziewicz}, 73–82.
{\it Contemp. Math.}, 824. e-print: \href{https://arxiv.org/abs/2205.01653}{arXiv:2205.01653} [math.GT].

\bibitem[BKSW]{s1s2connsumbkw}
R. P. Bakshi, S. Kim, S. Shi, X. Wang, On the Kauffman bracket skein module of $(S^1 \times S^2) \ \# \ (S^1 \ \times S^2)$, {\it Journal of Algebra}, 673 (2025), 103–137. \href{https://arxiv.org/abs/2405.04337}{arXiv:2405.04337} [math.GT]. 


\bibitem[BP]{counterhandle}
R. P. Bakshi, J. H. Przytycki, Kauffman bracket skein module of the connected sum of handlebodies: A counterexample, {\it Manuscripta Math.} 167 (2022), no. 3-4, 809–820. \href{https://arxiv.org/abs/2005.07750}{arXiv:2005.07750} [math.GT].





\bibitem[BHMV]{BHMV1}
C. Blanchet, N. Habegger, G. Masbaum, P. Vogel,
Topological quantum field theories derived from the Kauffman bracket. 
{\it Topology} 34 (1995), no. 4,
883–927. 

\bibitem[BW]{bw1}
F. Bonahon, B. Wong, 
Quantum traces for representations of surface groups in $SL_2(\mathbb{C})$.
{\it Geom. Topol.} 15 (2011), no. 3, 1569–1615. \href{https://arxiv.org/abs/1003.5250}{arXiv:1003.5250} [math.GT].


\bibitem[Bul1]{sl2cbullock}
D. Bullock,
Rings of $SL_2(\mathbb{C})$-characters and the Kauffman bracket skein module. {\it Comment. Math. Helv.} 72 (1997), no. 4, 521–542. 

\bibitem[Bul2]{trefoilbullock} 
D. Bullock,
On the Kauffman bracket skein module of surgery on a trefoil. 
{\it Pacific J. Math.} 178 (1997), no. 1, 37–51.

\bibitem[BuLo]{knotext}
D. Bullock, W. F. Lo Faro,
The Kauffman bracket skein module of a twist knot exterior. 
{\it Algebr. Geom. Topol.} 5 (2005), 107–118. \href{https://arxiv.org/abs/math/0402102}{arXiv:math/0402102} [math.QA].

\bibitem[BuPr]{smquant}
D. Bullock, J. H. Przytycki, Multiplicative structure of Kauffman bracket skein module quantizations. {\it Proc. Amer. Math. Soc.} 128 (2000), no. 3, 923–931. \href{https://arxiv.org/abs/math/9902117}{arXiv:math/9902117} [math.QA]. 


\bibitem[CF]{fokchekhov}
L. O. Chekhov, V. V. Fok, 
Quantum Teichmüller spaces. (Russian. Russian summary) 
{\it Teoret. Mat. Fiz.} 120 (1999), no. 3, 511–528; translation in 
{\it Theoret. and Math. Phys.} 120 (1999), no. 3, 1245–1259.


\bibitem[CL]{CL}
F. Constantino, T. T. Q. Lê, Stated skein modules of $3$-manifolds and TQFT, {J. Inst. Math. Jussieu} 24 (2025), no. 3, 663–703. \href{https://arxiv.org/abs/2206.10906}{arXiv:2206.10906} [math.GT].

\bibitem[DM]{pairofpantss1}
M. D\c{a}bkowski, M. Mroczkowski,
KBSM of the product of a disk with two holes and $S^1$.
{\it Topology Appl.} 156 (2009), no. 10, 1831–1849. \href{https://arxiv.org/abs/0808.3782}{arXiv:0808.3782} [math.GT].




\bibitem[EG]{EG}
S. Esebre, R. Gela, A recursive relation in the $(2p+1,2)$ torus knot,  Accepted to {\it Contemporary Mathematics}, e-print: \href{https://arxiv.org/abs/2409.18960}{arXiv:2409.18960} [math.GT]. 


\bibitem[FGL]{FGL}
C. Frohman, R. Gelca, W. F. Lofaro,
The $A$-polynomial from the noncommutative viewpoint. 
{\it Trans. Amer. Math. Soc.} 354 (2002), no. 2, 735–747. \href{https://arxiv.org/abs/math/9812048}{arXiv:math/9812048} [math.QA].



\bibitem[HK]{HK}
J. Hoste, M. Kidwell,
Dichromatic link invariants. 
{\it Trans. Amer. Math. Soc.} 321 (1990), no. 1, 197–229. 

\bibitem[HP]{HP} J. Hoste, J.H. Przytycki,
A survey of skein modules of 3-manifolds;
in  Knots 90, Proceedings of the International Conference on Knot
Theory and Related Topics, Osaka (Japan), August 15-19, 1990, Editor
A.~Kawauchi,Walter de Gruyter 1992, 363-379.

\bibitem[HP1]{kbsmlens}
J. Hoste, J. H. Przytycki,  The $(2,\infty)$-skein module of lens spaces; a generalization of the Jones polynomial. {\it J. Knot Theory Ramifications} 2 (1993), no. 3, 321–333.

\bibitem[HP2]{s1s2}
J. Hoste, J. H. Przytycki, The Kauffman bracket skein module of $S^1 \times S^2$. {\it Math. Z.} 220 (1995), no. 1, 65–73.

\bibitem [HP3]{whitehead}
J. Hoste, J. H. Przytycki,
The $(2,\infty)$-skein module of Whitehead manifolds. 
{\it J. Knot Theory Ramifications} 4 (1995), no. 3, 411-427.

\bibitem[Kap]{Kaplansky} I. Kaplansky, {\it Infinite abelian groups}, Univ. Michigan Press, Ann Arbor, MI, 1954; MR0065561.

\bibitem[Kas]{kashaev}
R. M. Kashaev, 
Quantization of Teichmüller spaces and the quantum dilogarithm.
{\it Lett. Math. Phys.} 43 (1998), no. 2, 105–115. \href{https://arxiv.org/abs/q-alg/9705021}{arXiv:q-alg/9705021} [math.QA].

\bibitem[Kau]{tlnkauffman} 
L. H. Kauffman, State models and the Jones polynomial, {\it Topology}, 26, 1987, 395-407.



\bibitem[Le1]{2bk}
T. T. Q. Lê, The colored Jones polynomial and the A-polynomial of knots. {\it Adv. Math.} 207 (2006), no. 2, 782–804. \href{https://arxiv.org/abs/math/0407521}{arXiv:math/0407521} [math.GT].

\bibitem[Le2]{lequantum}
T. T. Q. Lê,
Quantum Teichmüller spaces and quantum trace map.
{\it J. Inst. Math. Jussieu} 18 (2019), no. 2, 249–291. \href{https://arxiv.org/abs/1511.06054}{arXiv:1511.06054} [math.GT].

\bibitem[LT]{2bl}
T. T. Q. Lê, A. Tran,
The Kauffman bracket skein module of two-bridge links. 
{\it Proc. Amer. Math. Soc.} 142 (2014), no. 3, 1045–1056. \href{https://arxiv.org/abs/1111.0332} {arXiv:1111.0332} [math.GT].



\bibitem[Mul]{mullerskein}
G. Muller,
Skein and cluster algebras of marked surfaces.
{\it Quantum Topol.} 7 (2016), no. 3, 435–503. \href{https://arxiv.org/abs/1204.0020}{arXiv:1204.0020} [math.QA].

\bibitem[Mro1]{prism}
M. Mroczkowski, Kauffman bracket skein module of a family of prism manifolds.
{\it J. Knot Theory Ramifications} 20 (2011), no. 1, 159–170.

\bibitem[Mro2]{rp3rp3}
M. Mroczkowski, Kauffman bracket skein module of the connected sum of two projective spaces. 
{\it J. Knot Theory Ramifications} 20 (2011), no. 5, 651–675. \href{https://arxiv.org/abs/1008.1007} {arXiv:1008.1007} [math.GT].



\bibitem[Prz1]{smof3}
J. H. Przytycki, Skein modules of 3-manifolds. {\it Bull. Polish Acad. Sci. Math.} 39 (1991), no. 1-2, 91–100. \href{https://arxiv.org/abs/math/0611797}{arXiv:math/0611797} [math.GT].


\bibitem[Prz2]{fundamentals}  
J. H. Przytycki, Fundamentals of Kauffman bracket skein modules. {\it Kobe Math. J.}, 16(1), 1999, 45-66. \href{https://arxiv.org/abs/math/9809113}{arXiv:math/9809113} [math.GT]. 

\bibitem[Prz3]{connsum}
J. H. Przytycki,
Kauffman bracket skein module of a connected sum of $3$-manifolds.  
{\it Manuscripta Math.} 101 (2000), no. 2, 199–207. \href{https://arxiv.org/abs/math/9911120}{arXiv:math/9911120} [math.GT].

\bibitem[PS]{ps1}
J. H. Przytycki, A. S. Sikora, On skein algebras and $Sl_2(\mathbb{C})$-character varieties. {\it Topology} 39 (2000), no. 1, 115–148. \href{https://arxiv.org/abs/q-alg/9705011} {arXiv:q-alg/9705011} [math.QA].


\bibitem[Tur1]{turaevsolidtorus}
V. G. Turaev,
The Conway and Kauffman modules of a solid torus.
{\it Zap. Nauchn. Sem. Leningrad. Otdel. Mat. Inst. Steklov.} (LOMI) 167 (1988), Issled. Topol. 6, 79–89, 190; translation in {\it J. Soviet Math.} 52 (1990), no. 1, 2799–2805. 

\bibitem[Tur2]{Turaev1}
V. Turaev, Skein quantization of Poisson algebras of loops on surfaces, {\it Ann. Sci. Sc. Norm. Sup.}
(4) 24 (1991), No. 6, 635–704.


\end{thebibliography}
\end{document}